\documentclass{article}

\usepackage{arxiv}

\usepackage[utf8]{inputenc} % allow utf-8 input
\usepackage[T1]{fontenc}    % use 8-bit T1 fonts
\usepackage{hyperref}       % hyperlinks
\usepackage{url}            % simple URL typesetting
\usepackage{booktabs}       % professional-quality tables
\usepackage{amsfonts}       % blackboard math symbols
\usepackage{nicefrac}       % compact symbols for 1/2, etc.
\usepackage{microtype}      % microtypography
\usepackage{lipsum}
\usepackage{amsmath}

\usepackage{amsfonts,amssymb,stmaryrd}
\usepackage{graphicx}
\usepackage{amsthm}
\usepackage{xcolor}
\usepackage{dirtytalk}

\newcommand{\R}{\mathbb{R}}

\newtheorem{theorem}{Theorem}[section]
\newtheorem{proposition}{Proposition}[section]
\newtheorem{lemma}{Lemma}[section]
\newtheorem{corollary}{Corollary}[section]
\newtheorem{conjecture}[subsection]{Conjecture}

\newtheorem{remark}{Remark}[section]

% Igor's changes
\usepackage{subcaption}
\usepackage{cite}
\usepackage{authblk}
\newcommand{\p}{\partial}
\newcommand{\bb}{\begin{equation}}
\newcommand{\ee}{\end{equation}}
\newcommand{\ba}{\begin{array}}
\newcommand{\ea}{\end{array}}
\newcommand{\f}{\frac}
\usepackage[all]{xy}
\newcommand{\ds}{\displaystyle}

\newcommand{\al}{\alpha}

\newcommand{\sign}{\text{sgn}\,}
\newcommand{\I}{{\cal I}_0}
\numberwithin{equation}{subsection}
\usepackage{csquotes}

\usepackage{ulem}

\title{Existence, continuation, persistence and dynamics of solutions for a generalized 0-Holm-Staley equation}

\author{
 Priscila~Leal~da Silva\thanks{priscila.silva@ufabc.edu.br or pri.leal.silva@gmail.com,\quad \href{https://orcid.org/0000-0001-6499-9480}{ORCID 0000-0001-6499-9480}}\,\,\, and\,\,\,  Igor~Leite~Freire\thanks{igor.freire@ufabc.edu.br or igor.leite.freire@gmail.com,\quad \href{http://orcid.org/0000-0003-0539-9674}{ORCID 0000-0003-0539-9674}} \\
  Centro de Matem\'atica, Computa\c{c}\~ao e Cogni\c{c}\~ao\\
 Universidade Federal do ABC\\
 Santo Andr\'e, Brazil}
 
\begin{document}
\maketitle

\begin{abstract}

We consider a family of non-local evolution equations including the $0-$Holm-Staley equation. We show that the family considered does not posses compactly supported solutions as long as the initial data is non-trivial. Also, we prove different unique continuation results for the solutions of the family studied. In addition, some special solutions, such as peakons and kinks, are studied and their dynamics are analyzed. Persistence properties of the solutions are also investigated as well as we describe the scenario for the global existence of solutions of the $0-$Holm-Staley equation. In particular, the prove of global existence of solutions as well as our demonstrations for unique continuation results of solutions partially answer some questions pointed out in [A. A. Himonas and R. C. Thompson, Persistence properties and unique continuation for a generalized Camassa-Holm equation, J. Math. Phys., vol. 55, paper 091503, (2014)].

\end{abstract}

{\bf MSC classification 2010:} 35A01, 74G25, 37K40, 35Q51.

% keywords can be removed
\keywords{Compactly supported solutions \and Unique continuation of solutions \and Global existence of solutions \and Dynamics of solutions \and Camassa-Holm type equations}
\newpage
\tableofcontents
\newpage
\section{Introduction and motivation of the work}\label{sec1}

In \cite{anco} the equation (up to notation)
\bb\label{1.0.1}
u_t-u_{txx}+au^ku_x-b u^{k-1}u_xu_{xx}-cu^ku_{xxx}=0,
\ee
where $(ab,ac)\neq(0,0)$ and $k\neq0$, was considered from the point of view of conserved currents, point symmetries and peakon solutions. With these restrictions on the parameters, equation \eqref{1.0.1} is invariant under translations in $t$, $x$, scalings $(t,x,u)\mapsto (\lambda^{-k}t,x,\lambda u)$, $\lambda>0$, and if $k=1$ and $a=c$ we also have invariance under the Galilean boost $(t,x,u)\mapsto(t,x+\epsilon a t,u+\epsilon)$, see \cite[Proposition 1.1 ]{anco}.

In the same paper, conserved currents for \eqref{1.0.1} were also considered, see  \cite[Theorem 2.1]{anco}. Two of them are important to the present work, namely,
\bb\label{1.0.2}
\ba{lcl}
C^0&=&u,\\
\\
C^1&=&\ds{\f{a}{k+1}u^{k+1}+\f{kc-b}{2}u_x^2-cu^ku_{xx}-u_{tx}},
\ea
\ee
for $k=1$ or $b=kc$, and
\bb\label{1.0.3}
\ba{lcl}
C^0&=&\ds{\f{u^2+u_x^2}{2}},\\
\\
C^1&=&\ds{\left(\f{a}{k+2}u-cu_{xx}\right)u^{k+1}-uu_{tx}},
\ea
\ee
if and only if $b=(k+1)c$.

The relevance of the conserved currents is the following: if $(C^0,C^1)$ is a conserved current for \eqref{1.0.1}, then 
\bb\label{1.0.4}
\left.\left(\p_t C^0+\p_x C^1\right)\right|_{on\,\,\eqref{1.0.1}}\equiv0,
\ee
meaning that the divergence of the conserved currents vanishes identically on the solutions of the equation. This implies that the functional
$$
u\mapsto \mathcal{ H}[u]=\int_\R C^0 dx
$$
is a constant (of motion), or a {\it conserved quantity}, for the equation. Very often the last integral is also referred by analysts as {\it conservation law} for the equation. %We do not follow this terminology in the present paper because for us a conservation law for \eqref{1.0.1} is {\it the divergence expression} \eqref{1.0.4} of conserved currents taken on the solutions of \eqref{1.0.1}. 
For further details, see \cite{anco,priigorjde} and references thereof.

Taking $a=(b+c)$ in \eqref{1.0.1} we obtain
\bb\label{1.0.5}
u_t-u_{txx}+(b+c)u^ku_x=bu^{k-1}u_{x}u_{xx}+cu^ku_{xxx},
\ee
which was considered in \cite{yan} (for $c=1$, see \cite{himade,himjmp2014}). We observe that \eqref{1.0.1} with $a=c=1$ and $b=0$ gives (note that if $b=0$ and $a=c\neq0$ we can always proceed with a scaling in $t$ and take $c=1$)
\bb\label{1.0.6}
m_t+u^km_x=0,\quad m=u(t,x)-u_{xx}(t,x),\quad t>0,
\ee
and for $k=1$ equation \eqref{1.0.6} is reduced to the equation $m_t+um_x=0$, which is a very particular case of the $b-$equation 
\bb\label{1.0.7}
m_t+um_x+b u_xm=0
\ee
introduced in \cite{deg}, later investigated in \cite{holm-siam,holm-pla} by Holm and Staley, and sometimes referred as Holm-Staley (HS) equation. For this reason we shall refer to \eqref{1.0.6} with arbitrary power as generalized $0$-Holm-Staley equation, or simply $g0-HS$ equation for short. 

It is worth mentioning that \eqref{1.0.6} with $k=1$ can also be obtained from shallow water elevation equations via Kodama transformation, see \cite{dullin-fluid,holm-physica}, which shows its relevance in the study of shallow water models. Solutions of \eqref{1.0.6} with $k=1$ (or \eqref{1.0.7} with $b=0$) were considered in \cite{holm-siam,holm-pla,qiao}.

More recently, wave-breaking and global existence of solutions for \eqref{1.0.5} were considered in \cite{yan}. However, some of the results proved there were done with the restriction $b\neq0$. Also, in \cite{himonas-jnl} ill-posedness for the $b-$equation \eqref{1.0.7} was also considered when $b>1$. 

We note that the results in \cite{anco,yan,himonas-jnl} {\it suggest} that the cases $b=0$ or $k=0$ make \eqref{1.0.1} very peculiar. This is also reinforced by the results of \cite{zhou}, where the solutions of \eqref{1.0.7} were studied and the case $b=0$ was excluded in some analysis, such as in theorems $2.1$ and $2.2$, concerned with non-existence of global solutions and their blow up, respectively.

It is also intriguing that \eqref{1.0.1} does not have conserved currents up to second order for $b=0$ and $k\notin\{-1,0,1\}$ (see \cite[Theorem 2.1]{anco}), a fact also observed in \cite{holm-siam} when \eqref{1.0.6} was considered with $k=1$. All of these results make us conjecture that no further conservation laws can be obtained to \eqref{1.0.6} beyond those reported in \cite{anco}.

For equations of the type \eqref{1.0.1}, the conserved quantities provide qualitative information about its solutions subject to an initial condition $u(0,x)=u_0(x)$. For example, if $b=0$ and $a=c=k=1$, then \eqref{1.0.1} has the conserved quantity
\bb\label{1.0.8}
\mathcal{ H}_0[u]=\int_\R udx.
\ee

It means that if $u$ does not change its sign, then the $L^1(\R)-$norm of the rapidly decaying solutions of \eqref{1.0.1} with $b=0$ and $a=c=k=1$ is conserved (this will be better explored in Theorem \ref{teo3.1} in Subsection \ref{subsec3.1}). On the other hand, if $k=-1$, then the equation \eqref{1.0.6} has the conserved quantity
\bb\label{1.0.9}
\mathcal{ H}[u]=\f{1}{2}\int_\R(u^2+u_x^2)dx,
\ee
which is essentially the square of the $H^1(\R)-$norm of the solution $u$ of the equation and, therefore, for solutions decaying to $0$ as $x\rightarrow\pm\infty$, their $H^1(\R)-$norms are conserved.

The aim of the present paper is to consider the Cauchy problem
\bb\label{1.0.10}
\left\{
\ba{l}
u_t-u_{txx}+u^ku_x-u^ku_{xxx}=0,\\
\\
u(0,x)=u_0(x),
\ea
\right.
\ee
and determine properties and behaviour of its solutions, as well as some particular solutions of \eqref{1.0.6}.

In \cite[Theorem 2.1]{yan} it was established the local well-posedness of the equation \eqref{1.0.5} with initial data $u(0,x)=u_0(x)$, where $u_0\in B_{p,r}^s(\R)$ ($B_{p,r}^s(\R)$ denotes a Besov space, see \cite{yan} for further details). Taking $p=r=2$, we can ensure local well-posedness to \eqref{1.0.5} with $u(0,\cdot)=u_0(\cdot)\in H^s(\R)$, $s>3/2$, as shown \cite[Corollary 2.1]{yan}, see also \cite[Theorem 1.1]{himjmp2014}. Therefore the local well-posedness for \eqref{1.0.10} is proved directly by invoking these results and, therefore, its demonstration is omitted. 

We note that in \cite{yan} the question of global existence of solutions to \eqref{1.0.5} with a certain choice of the parameters and the initial data is addressed, but not for equation \eqref{1.0.6}, meaning that while in \cite{yan} we have the local existence for \eqref{1.0.10}, its global existence is not considered, see \cite[Theorem 4.1]{yan}. Also, in the same reference the problem of blow up is considered. In fact, it was shown that the first blow up of \eqref{1.0.5} occurs only as a wave-breaking. Likewise in the case of global existence, the results for wave-breaking proved in \cite[Theorem 5.1]{yan} are not applicable to \eqref{1.0.10}.

We would like to observe that in \cite{himjmp2014} the authors considered unique continuation results and persistence properties for equation \eqref{1.0.5} with $c=1$ (called by them as {\it g-kbCH equation}. Some of their results, such as \cite[Theorem 1.2]{himjmp2014} deals with the situation $k=1$ and $b\in[0,3]$ or $b=k$ and $k$ is a positive odd integer. In this work we prove an analogous result for \eqref{1.0.5} with $c=0$ and $k$ is an arbitrary positive integer. 

The mentioned paper by Himonas and Thompson has some interesting open problems. For example,  immediately after \cite[Theorem 1.4]{himjmp2014} we have the following observation:

\begin{displayquote}Therefore, the question of whether the property of unique continuation is present for the Novikov equation or any other member of the g-kbCH family of equations not included in Theorem 1.2 is an interesting open problem. \end{displayquote}

Additionally, we have another open problem pointed out in \cite[page 3]{himjmp2014}:

\begin{displayquote}Similarly, the existence of global solutions for g-kbCH when $b\neq k+1$ or when $m_0$ changes sign, like in McKean$^{38,39}$ for CH, is another open problem.\end{displayquote}

We would like to mention that references $38$ and $39$ mentioned above corresponds to references \cite{mc1} and \cite{mc2} of the present work.

In our paper we shed light to these questions, extending the results proved in \cite{himjmp2014} to other cases, and go beyond: we also establish continuation results for the equation, as well as we show that as long as the initial data for \eqref{1.0.6} is non-zero, then the solutions of the equation, under certain conditions, cannot be compactly supported. In addition, we also improve results of \eqref{1.0.6} concerned with existence of global solutions.

In the next section we present our main results and show how they are inserted in the state of the art of the field.

\section{Notation, main results and outline of the paper}\label{sec2}

In this section we present the notation of the manuscript, as well as its main results, structure, novelties and challenges.

\subsection{Notation} 
Throughout this paper $\mathbb{Z}$ and $\mathbb{N}$ denote the sets of the integer and natural numbers, respectively, while $\mathbb{N}_0:=\{0\}\cup\mathbb{N}$. Given $s\in\R$, by $H^s(\mathbb{\R})$ we mean the usual Sobolev space of order $s$, with corresponding Sobolev norm denoted by $\|\cdot\|_{H^s}$, whereas $\|\cdot\|_p$, $1\leq p\leq\infty$, denotes the norm of the $L^p(\R)$ space. Given two functions $f$ and $g$, their convolution is denoted by $f\ast g$. If $u=u(t,x)$, we denote by $u_0(x)$ the function $x\mapsto u(0,x)$, $m=u-u_{xx}$ and $m_0=u-u_0''$. Note that $m=(1-\p_x^2)u$ and then $u=g\ast m$, where $g(x)=e^{-|x|}/2$. Of great importance for us is the fact that if $s\geq t$, then $H^s(\R)\hookrightarrow H^t(\R)$. 

We also recall that
$|f(x)|\sim O(g(x))$ as $x\nnearrow\infty$ (respectively $|x|\rightarrow\infty$) if there exists a real constant $L$ such that
$$
\lim_{x\rightarrow\infty}\f{|f(x)|}{g(x)}=L\quad\Big(\text{respectively } \lim_{|x|\rightarrow\infty}\f{|f(x)|}{g(x)}=L\Big),
$$
whereas $|f(x)|\sim o(g(x))$ as $x\nnearrow\infty$ (respectively $|x|\rightarrow\infty$) if
$$
\lim_{x\rightarrow\infty}\f{|f(x)|}{g(x)}=0\quad\Big(\text{respectively } \lim_{|x|\rightarrow\infty}\f{|f(x)|}{g(x)}=0\Big).
$$

Note that these two conditions are not mutually exclusive, but not equivalent: in case $L>0$, the former condition does not imply the latter, while the converse is true in case $L=0$.

Finally, in some parts of the paper we use the regularization $\sign{(0)}=0$, whereas $T$ denotes the lifespan of the solutions $u$ of the problem \eqref{1.0.10}. In particular, $T>0$.

\subsection{Main results} 

Our first result regarding \eqref{1.0.10} is:
\begin{theorem}\label{teo2.1}
Given an initial data $u_0\in H^{3}(\R)$, let $u$ be the corresponding solution of \eqref{1.0.10}, where $k$ is a positive integer.
\begin{enumerate}
    \item If $m_0$ does not change sign, then $m$ does not as well. Moreover, $\sign{(m)}=\sign{(m_0)}$.
    \item The momentum $m$ is compactly supported if and only if $m_0$ is compactly supported.
%    \item If $u_0'$ does not change sign, then $u$ does not change sign and $\sign{(u_0')}=\sign{(u_x)}$. {\bf Acho que é o caso de retirar isso}}{\color{blue}
    \item If $m_0\geq0$ or $m_0\leq0$, then $u(t,x)\geq0$ or $u(t,x)\leq0$, respectively. 
    \item If $m_0\geq0$ or $m_0\leq0$, then $(u+u_x)(t,x)\geq0$ or $(u+u_x)(t,x)\leq0$, respectively.
    \end{enumerate}
\end{theorem}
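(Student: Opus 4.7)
The plan is to recast equation \eqref{1.0.10} as a pure transport equation for the momentum variable $m=u-u_{xx}$ and then exploit the method of characteristics. Since $u^ku_x - u^ku_{xxx} = u^k\p_x(u-u_{xx}) = u^k m_x$, the Cauchy problem \eqref{1.0.10} is equivalent to
\begin{equation*}
m_t + u^k m_x = 0, \qquad m(0,x)=m_0(x).
\end{equation*}
Introduce the characteristic flow $q=q(t,x)$ defined by $q_t = u^k(t,q)$ with $q(0,x)=x$. The $H^3$-local well-posedness of \eqref{1.0.10}, together with the embedding $H^3(\R)\hookrightarrow C^2(\R)$, makes $u^k(t,\cdot)$ of class $C^2$ and in particular Lipschitz in $x$, so $q(t,\cdot)$ is an orientation-preserving $C^1$-diffeomorphism of $\R$ for every $t\in[0,T)$. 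The transport equation then yields the conserved identity $m(t,q(t,x)) = m_0(x)$, which is the single workhorse for all four items.

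Items (1) and (2) drop out from this conservation directly. Applying $\sign$ gives $\sign(m(t,q(t,x))) = \sign(m_0(x))$; since $q(t,\cdot)$ is a bijection of $\R$, item (1) follows. For (2), the fact that $q(t,\cdot)$ is a homeomorphism yields $\mathrm{supp}\,m(t,\cdot) = q(t,\mathrm{supp}\,m_0)$; one then uses the elementary a priori bound $|q(t,x)-x| \leq t\,\|u\|_{L^\infty([0,T]\times\R)}^k$ (finite since $u\in C([0,T),H^3)$) to ensure that neighbourhoods of $\pm\infty$ are mapped to neighbourhoods of $\pm\infty$, so compactness of the support transfers in both directions.

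For items (3) and (4) I would rely on the representation $u=g\ast m$ with the strictly positive Green's function $g(x)=e^{-|x|}/2$. Item (3) is immediate: from item (1) the sign of $m(t,\cdot)$ agrees with that of $m_0$, and since $g>0$ the convolution $u(t,x) = \int_\R g(x-y)\,m(t,y)\,dy$ inherits that same sign. For item (4) use the factorization $(1-\p_x^2) = (1-\p_x)(1+\p_x)$ to see that $v:=u+u_x$ satisfies $v-v_x=m$; imposing the decay at $+\infty$ inherited from $u\in H^3(\R)$ singles out the unique representation
\begin{equation*}
(u+u_x)(t,x) = \int_x^{+\infty} e^{x-y}\,m(t,y)\,dy,
\end{equation*}
whose integrand is the product of the positive kernel $e^{x-y}$ and a function of constant sign by item (1); the desired inequality follows.

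The main technical point is the rigorous derivation of the characteristic identity $m(t,q(t,x))=m_0(x)$ on the whole lifespan $[0,T)$: this requires global-in-$x$ existence and uniqueness for the characteristic ODE and a verification that $q(t,\cdot)$ is in fact a global diffeomorphism (not merely a local one), both of which are supplied by the $H^3$ regularity of $u$ and the resulting Lipschitz control on $u^k$. Once this identity is in place the four claims become simple sign-propagation statements under convolution with $g$ or with the positive kernel $e^{x-y}\mathbf{1}_{y>x}$, with no further analytical obstacle.
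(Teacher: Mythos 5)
Your proposal is correct and follows essentially the same route as the paper: the transport form $m_t+u^km_x=0$, the characteristic diffeomorphism $q(t,\cdot)$ giving $m(t,q(t,x))=m_0(x)$ (the paper's Lemma \ref{lema3.2} and Theorem \ref{teo3.1}), the positive kernel $g=e^{-|x|}/2$ for item 3, and the one-sided representation $(u+u_x)(t,x)=e^x\int_x^{\infty}e^{-y}m(t,y)\,dy$ for item 4, which is exactly the paper's identity \eqref{4.1.2}. No substantive differences to report.
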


In the literature of Camassa-Holm type equations, e.g, see \cite{const, himjmp2014,him-cmp}, very often if the sign of the initial momentum does not change, then usually this property persists in the corresponding solution. Theorem \ref{teo2.1} says that the same also holds for \eqref{1.0.10}. An example of initial momentum leading to a non-negative solution is the bump function
$$
m_0(x)=\left\{
\ba{lcl}
\ds{e^{-\f{1}{x^2-1}}},&\quad& |x|<1,\\
\\
0,&\quad & |x|\geq1.
\ea\right.
$$

Note, however, that the converse is not true: while the positiveness of the initial momentum implies that the corresponding solution will have the same property (including the initial data), a positive initial data would not necessarily imply that the corresponding initial momentum is positive. In fact, it is enough to take $u_0$ as the bump function above.

For $k=1$ we are able to prove a continuation result for solutions of \eqref{1.0.10}, as stated in the next result.

\begin{theorem}\label{teo2.2}
Let $T>0$, $I\subseteq\R$ a non-empty open interval, $\Omega:=(0,T)\times I$, and assume that $u\in C^0([0,T), H^s(\R))$, $s>3/2$, is a solution of the equation 
\bb\label{2.2.1}
u_t-u_{txx}+uu_x-uu_{xxx}=0.
\ee
If $u\big|_{\Omega}=0$, then $u\equiv0$ on $[0,T)\times\R$ and, moreover, $u$ can be extended globally.
\end{theorem}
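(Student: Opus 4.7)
The plan is to recast equation \eqref{2.2.1} in a nonlocal Camassa--Holm-type form $u_t + uu_x = P$ whose right-hand side is a weighted average of the manifestly non-negative quantity $u_x^2$, and then use the vanishing of $u$ on $\Omega$ to force $u_x$ to vanish almost everywhere at each time $t\in(0,T)$. From there the conclusion follows quickly.

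For the reformulation, applying $(1-\p_x^2)$ to $u_t+uu_x$, expanding $(uu_x)_{xx}=3u_xu_{xx}+uu_{xxx}$, and using \eqref{2.2.1} to replace $u_t-u_{txx}$ by $-uu_x+uu_{xxx}$, I obtain the identity
\[
(1-\p_x^2)(u_t+uu_x)=-\tfrac{3}{2}(u_x^2)_x.
\]
Convolving with the Green's function $g(x)=\tfrac12 e^{-|x|}$ of $1-\p_x^2$ on $\R$ then yields
\[
u_t + u u_x \;=\; \frac{3}{4}\int_\R \sign(x-y)\,e^{-|x-y|}\,u_x(t,y)^2\,dy \;=:\; P(t,x),
\]
and for $u\in C^0([0,T);H^s(\R))$ with $s>3/2$ the integrals encountered below are absolutely convergent, since $u_x\in L^\infty(\R)\cap L^2(\R)$.

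Since $u\equiv 0$ on $\Omega$, both $u_t$ and $uu_x$ vanish there, hence so does $P$. Writing $I=(a,b)$ (the unbounded cases are analogous) and setting
\[
F_1(t,x):=\int_{-\infty}^x e^y u_x^2(t,y)\,dy,\qquad F_2(t,x):=\int_x^{\,\infty} e^{-y} u_x^2(t,y)\,dy,
\]
the identity $P=\tfrac34(e^{-x}F_1-e^x F_2)=0$ on $\Omega$ reads $F_1=e^{2x}F_2$ there. As $u_x=0$ on $\Omega$, one has $\p_x F_1=e^x u_x^2=0$ and $\p_x F_2=-e^{-x}u_x^2=0$ on $\Omega$, so $F_1(t,\cdot)$ and $F_2(t,\cdot)$ are constant in $x$ on $I$. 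Combined with $F_1=e^{2x}F_2$ and the non-constancy of $e^{2x}$, this forces $F_1\equiv F_2\equiv 0$ on $\Omega$. Because $F_1$ is non-decreasing and $F_2$ non-increasing in $x$ (their integrands being non-negative), the vanishing of $F_1$ at $x=b^-$ propagates to $F_1(t,x)=0$ for all $x\leq b$, and the vanishing of $F_2$ at $x=a^+$ to $F_2(t,x)=0$ for all $x\geq a$; since $a<b$, together these give $u_x(t,\cdot)=0$ a.e.\ on $\R$ for every $t\in(0,T)$.

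Hence $u(t,\cdot)$ is constant in $x$ for each $t\in(0,T)$, and being in $H^s(\R)\subset L^2(\R)$ the constant must be $0$. Continuity in $t$ then yields $u_0\equiv 0$; since $u\equiv 0$ is a global solution of \eqref{2.2.1}, the uniqueness part of the local well-posedness theory forces $u\equiv 0$ on $[0,T)\times\R$ and provides the trivial global extension to $[0,\infty)\times\R$. I expect the main obstacle to be the nonlocal reformulation: one must correctly identify the cancellations in $(1-\p_x^2)(uu_x)$ that isolate the signed term $(u_x^2)_x$; once this Camassa--Holm-like form is in hand, the rest of the argument is a straightforward monotonicity/sign argument driven by $u_x^2\geq 0$.
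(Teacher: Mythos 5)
Your proof is correct and follows essentially the same route as the paper: the paper likewise rewrites the equation as $u_t+uu_x+\tfrac{3}{2}\p_x\Lambda^{-2}(u_x^2)=0$ and uses the strict positivity of the kernel $e^{-|x-y|}$ against the non-negative density $u_x^2$ to force $u_x(t,\cdot)\equiv0$ for each $t\in(0,T)$. The only differences are cosmetic: the paper extracts the conclusion by integrating $F'$ over a subinterval of $I$ using the identity $\p_x^2\Lambda^{-2}=\Lambda^{-2}-1$ rather than via your $F_1,F_2$ monotonicity argument, and it actually proves the statement for the whole $b$-family with $b\in[0,3]$, obtaining the theorem as the case $b=0$.
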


It is possible to relax the condition that $u$ vanishes on $\Omega=(0,T)\times I$ in Theorem \ref{teo2.2}, for some open set $I\subseteq\R$. In fact, we can prove a similar result on an arbitrary, non-empty open set $\Omega\subseteq(0,T)\times \R$, but the price, however, is to impose that the solution does not change its sign.

\begin{theorem}\label{teo2.3}
Let $t_0,t_1,T\in\R$ such that $0<t_0<t_1<T$, $I\subseteq\R$ a non-empty open interval, and $\Omega:=(t_0,t_1)\times I$. Suppose that $u\in C^0([0,T), H^s(\R))$, $s>3/2$, is a solution of \eqref{2.2.1}. If $u$ is either non-negative or non-positive and $u\big|_\Omega=0$, then $u\equiv0$ on $[0,T)\times\R$ and, moreover, $u$ can be extended globally.
\end{theorem}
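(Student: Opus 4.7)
The plan is to reduce Theorem \ref{teo2.3} to Theorem \ref{teo2.2} by a time translation, and then to propagate the vanishing back to the initial time using the transport structure of \eqref{2.2.1}.

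First, I define $v(t,x):=u(t+t_0,x)$ for $t\in[0,T-t_0)$. Since \eqref{2.2.1} is autonomous in $t$, the function $v$ is again a solution in $C^0([0,T-t_0),H^s(\mathbb{R}))$, and $v$ vanishes on $(0,t_1-t_0)\times I$. Restricting $v$ to the subinterval $[0,t_1-t_0)$ and invoking Theorem \ref{teo2.2} with $T$ replaced by $t_1-t_0$ gives $v\equiv 0$ on $[0,t_1-t_0)\times\mathbb{R}$, which translates to $u\equiv 0$ on $[t_0,t_1)\times\mathbb{R}$. In particular $u(t_0,\cdot)\equiv 0$, so by forward uniqueness for the Cauchy problem started at time $t_0$ with zero data, $u\equiv 0$ on $[t_0,T)\times\mathbb{R}$.

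To propagate the vanishing back to $[0,t_0]$, I use the equivalent transport form $m_t+um_x=0$ with $m=u-u_{xx}$. Letting $q(t,x_0)$ be the flow of $u$, namely $\dot q=u(t,q)$ with $q(0,x_0)=x_0$, the regularity $s>3/2$ makes $q(t,\cdot)$ a $C^1$ diffeomorphism of $\mathbb{R}$ for every $t\in[0,T)$ and ensures that $m$ is conserved along characteristics, $m(t,q(t,x_0))=m_0(x_0)$. Evaluating at $t=t_0$, the vanishing $m(t_0,\cdot)\equiv 0$ (inherited from $u(t_0,\cdot)\equiv 0$) together with the surjectivity of $q(t_0,\cdot)$ forces $m_0\equiv 0$, hence $u_0=g\ast m_0\equiv 0$ with $g(x)=e^{-|x|}/2$. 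Forward uniqueness then delivers $u\equiv 0$ on $[0,T)\times\mathbb{R}$, and the standard blow-up alternative for \eqref{2.2.1} provides the global extension.

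The main point that requires careful execution is the characteristic argument at regularity $H^s$, $s>3/2$: \emph{a priori} $m$ only lies in $C^0([0,T),H^{s-2}(\mathbb{R}))$, so the conservation identity $m(t,q(t,\cdot))=m_0$ must be justified using the additional gain $u\in C^1([0,T),H^{s-1}(\mathbb{R}))$ coming from the equation itself; this is by now standard in the Camassa--Holm-type literature, so I do not anticipate a serious obstacle. I note in passing that the sign hypothesis on $u$ does not appear to be used by this particular strategy; presumably it enters in an alternative, more direct argument that bypasses either Theorem \ref{teo2.2} or the backward characteristic step.
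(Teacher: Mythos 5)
Your first step coincides in substance with the paper's: the paper also fixes $t^\ast\in(t_0,t_1)$ and reruns the argument of Theorem \ref{teo3.2} (the function $F$ of \eqref{4.2.2} with $b=0$) to conclude $u(t^\ast,\cdot)\equiv0$, hence $u\equiv0$ on $(t_0,t_1)\times\R$. Where you genuinely diverge is in propagating this to all of $[0,T)$. The paper does it in one stroke with the conserved quantity $\mathcal{H}_0[u]=\int_\R u\,dx$: since $\mathcal{H}_0[u(t^\ast,\cdot)]=0$ and $\mathcal{H}_0$ is constant in time, and since the sign hypothesis gives $|\mathcal{H}_0[u(t,\cdot)]|=\|u(t,\cdot)\|_1$ (Theorem \ref{teo4.1}), it follows that $\|u(t,\cdot)\|_1=0$ for every $t$, forward and backward simultaneously. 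You instead use forward uniqueness for $t>t_0$ (which is fine) and a characteristic argument on $m$ for $t<t_0$.

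The backward step is where your proposal has a genuine gap at the stated regularity. For $s>3/2$ the momentum $m=u-u_{xx}$ lies only in $C^0([0,T),H^{s-2}(\R))$, which for $s<2$ is a space of distributions of negative order; the pointwise identity $m(t,q(t,x_0))=m_0(x_0)$ is then not even defined, and the paper's own versions of this statement (Lemma \ref{lema3.2}, Theorem \ref{teo3.1}, and item 2 of Theorem \ref{teo2.1}) are all proved under the hypothesis $u_0\in H^3(\R)$. Your appeal to the gain $u\in C^1([0,T),H^{s-1}(\R))$ does not help here: extra time regularity does not restore the two missing spatial derivatives. The clean repair within your own strategy is to drop the characteristics entirely and invoke backward uniqueness: equation \eqref{2.2.1} is invariant under $(t,x,u)\mapsto(-t,-x,u)$, so uniqueness backward in time from the slice $u(t_0,\cdot)\equiv0$ follows from the forward well-posedness theory applied to the reflected solution, yielding $u\equiv0$ on $[0,t_0]$ with no additional regularity. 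With that repair your argument closes and, as you yourself note, never uses the sign hypothesis, so it would prove a statement stronger than the one in the paper; by contrast, the paper's conserved-quantity route needs the fixed sign of $u$ in an essential way to convert $\int_\R u\,dx=0$ into $u\equiv0$, but in exchange avoids any appeal to backward-in-time solvability.
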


The following result is a foregone conclusion of the last theorem.

\begin{corollary}\label{cor2.1}
Assume that $u_0\in H^3(\R)$ and let $u\in C^0([0,T),H^3(\R))\cap C^1([0,T),H^2(\R))$ be the corresponding solution of \eqref{2.2.1}. Suppose that $m_0\in L^1(\R)$ and its sign does not change. If there exists a rectangle $\Omega=(t_0,t_1)\times (x_0,x_1)\subseteq[0,T)\times\R$ such that $u\big|_\Omega\equiv0$, then $u$ vanishes everywhere.
\end{corollary}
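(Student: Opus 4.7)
The plan is to deduce the corollary directly by combining item 3 of Theorem \ref{teo2.1} with Theorem \ref{teo2.3}. Since the hypothesis assumes $m_0$ has a fixed sign, the first task is to upgrade this to a sign condition on the solution $u$ itself; then the zero-set hypothesis plus the sign condition will feed into Theorem \ref{teo2.3} to force $u\equiv 0$.

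More concretely, I would first invoke item 3 of Theorem \ref{teo2.1} with $k=1$: if $m_0\ge0$ everywhere then $u(t,x)\ge0$ on $[0,T)\times\R$, and analogously in the non-positive case. So $u$ is either non-negative or non-positive on the full strip $[0,T)\times\R$. Next I would observe that the rectangle $\Omega=(t_0,t_1)\times(x_0,x_1)$ supplied by the hypothesis, while only assumed to lie in $[0,T)\times\R$, can be shrunk to a rectangle $\Omega':=(t_0',t_1')\times(x_0,x_1)$ with $0<t_0'<t_1'<T$ on which $u$ still vanishes (if $t_0=0$, replace it by some $t_0'\in(0,t_1)$; if $t_1=T$ is not excluded by the inclusion in $[0,T)\times\R$ it already satisfies $t_1<T$). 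This puts $u$ in exactly the situation of Theorem \ref{teo2.3}: a sign-definite $H^s$ solution of \eqref{2.2.1} that vanishes on a rectangle $(t_0',t_1')\times I$ with $I=(x_0,x_1)$.

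Applying Theorem \ref{teo2.3} then yields $u\equiv 0$ on $[0,T)\times\R$ and, moreover, $u$ extends globally as the zero solution, which is the conclusion of the corollary.

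There is essentially no obstacle here: both pieces of input are already stated, and the only bookkeeping is the elementary interval-shrinking argument needed to match the strict inequalities $0<t_0'<t_1'<T$ demanded by Theorem \ref{teo2.3}. The role of the hypothesis $m_0\in L^1(\R)$ does not enter the logical chain beyond being compatible with the framework of Theorem \ref{teo2.1}; it is included to make the statement self-contained for the applications of interest.
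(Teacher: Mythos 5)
Your proposal is correct and follows essentially the same route as the paper, which likewise deduces from the fixed sign of $m_0$ that the solution $u$ is sign-definite and then cites Theorem \ref{teo2.3} to conclude. Your extra remark about shrinking the time interval to meet the strict inequalities $0<t_0'<t_1'<T$ required by Theorem \ref{teo2.3} is a harmless bookkeeping point the paper leaves implicit.
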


We have a very strong consequence of Theorem \ref{teo2.3}.

\begin{corollary}\label{cor2.2}
Assume that $u_0\in H^3(\R)$ is a non-vanishing compactly supported data for
\bb\label{2.2.2}
\left\{
\ba{l}
u_t-u_{txx}+uu_x-uu_{xxx}=0,\\
\\
u(0,x)=u_0(x),
\ea
\right.
\ee
such that $m_0$ does not change sign. Then the corresponding solution $u$ is not compactly supported.
\end{corollary}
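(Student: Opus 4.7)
The plan is to argue by contradiction, reducing matters to the continuation result in Corollary \ref{cor2.1}. Specifically, I would assume that $u$ is compactly supported, extract a rectangle $\Omega\subseteq[0,T)\times\R$ on which $u$ vanishes identically, and then invoke Corollary \ref{cor2.1} to conclude $u\equiv 0$ everywhere; this forces $u_0\equiv 0$, contradicting the hypothesis that $u_0$ is non-vanishing.

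The preliminary step is to verify the hypotheses of Corollary \ref{cor2.1}. Since $u_0\in H^3(\R)$ has compact support, Sobolev embedding gives $u_0\in C^2(\R)$, so that $m_0=u_0-u_0''$ is continuous and compactly supported; in particular $m_0\in L^1(\R)$. The sign condition on $m_0$ is granted by hypothesis. Local well-posedness, as recalled in Section \ref{sec1}, provides the regularity $u\in C^0([0,T),H^3(\R))\cap C^1([0,T),H^2(\R))$ required in Corollary \ref{cor2.1}. Granting that $u$ is compactly supported on $[0,T)\times\R$, one obtains a rectangle $\Omega=(t_0,t_1)\times(x_0,x_1)$ with $0<t_0<t_1<T$ lying outside $\mathrm{supp}(u)$, whence $u|_\Omega\equiv 0$. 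Corollary \ref{cor2.1} then forces $u\equiv 0$ on $[0,T)\times\R$; in particular $u(0,\cdot)=u_0\equiv 0$, the desired contradiction.

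The main point of care is in precisely interpreting ``$u$ is compactly supported'' and guaranteeing the rectangle $\Omega$. Under the natural reading that $\mathrm{supp}(u)\subseteq\R^2$ is bounded, the extraction of $\Omega$ is immediate. If instead one only knows that $u(t,\cdot)$ has compact support for each fixed $t$, one uses the transport structure of the momentum equation — for $k=1$, namely $m_t+um_x=0$ — together with the bound $\|u\|_{L^\infty}\leq C\|u\|_{H^s}$ from Sobolev embedding ($s>1/2$) to conclude that the $x$-support of $m(t,\cdot)$ grows at a uniformly bounded rate, which, combined with item 2 of Theorem \ref{teo2.1}, allows $\Omega$ to be chosen on a short time window lying strictly outside the supports; with $\Omega$ in hand, Corollary \ref{cor2.1} finishes the argument.
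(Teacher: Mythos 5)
Your main line — contradiction plus reduction to Corollary \ref{cor2.1} — is a legitimate and genuinely different route from the paper's. The paper does not pass through Corollary \ref{cor2.1} at all: it fixes a single time slice $t_0$, uses the equation on a spatial interval where $u(t_0,\cdot)$ vanishes to get $(1-\p_x^2)u_t(t_0,\cdot)=0$ there, hence $F=0$ there for the function $F$ of \eqref{4.2.2}, reruns the convolution-positivity argument of Theorem \ref{teo3.2} to conclude $u(t_0,\cdot)\equiv 0$, and then contradicts the conservation of $\|u(t,\cdot)\|_1$ from Theorem \ref{teo4.1}. The paper's argument therefore needs compact support of only one time slice, whereas your reduction needs $u$ to vanish on a full space--time rectangle. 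Under your first reading (the support of $u$ is a bounded subset of $[0,T)\times\R$) your argument is complete and correct: the hypotheses of Corollary \ref{cor2.1} are verified exactly as you say, and the rectangle is immediate.

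The gap is in your fallback for the per-time-slice interpretation, which is the stronger statement and the one the paper's own proof effectively establishes. You propose to locate the rectangle $\Omega$ by tracking the support of $m(t,\cdot)$ via the transport equation and item 2 of Theorem \ref{teo2.1}. But Corollary \ref{cor2.1} requires $u|_\Omega\equiv 0$, not $m|_\Omega\equiv 0$, and the support of $m$ does not control the support of $u$: since $u=g\ast m$ with $g(x)=e^{-|x|}/2>0$ everywhere, $u(t,\cdot)$ is generically supported on all of $\R$ even when $m(t,\cdot)$ is compactly supported, so a rectangle outside $\mathrm{supp}\,m(t,\cdot)$ need not lie outside $\mathrm{supp}\,u(t,\cdot)$; and conversely nothing bounds how $\mathrm{supp}\,u(t,\cdot)$ moves in $t$. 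In fact this very observation closes the gap more cheaply than a rectangle would: if some slice $u(t_0,\cdot)$ were compactly supported, then so would be $m(t_0,\cdot)=(1-\p_x^2)u(t_0,\cdot)$; since $m(t_0,\cdot)$ is one-signed (Theorem \ref{teo3.1}) and $\int_\R m(t_0,\cdot)\,dx=\int_\R m_0\,dx\neq 0$ (Lemma \ref{lema3.1}, with $m_0\not\equiv 0$ one-signed because $u_0\not\equiv 0$), the strict positivity of $g$ forces $u(t_0,x)=(g\ast m)(t_0,x)\neq 0$ for every $x$, contradicting compact support of that slice outright. Either repair this step along these lines or restrict your claim to the space--time-support reading.
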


Theorems \ref{teo2.1}--\ref{teo2.3} only request that the solution exists locally. We also observe that the local well-posedness assured by the results in \cite{yan,himjmp2014} guarantees the existence of a solution $u\in C^0([0,T);H^s(\R))\cap C^1([0,T);H^{s-1}(\R))$, for a certain $T>0$ and $s>3/2$. Two questions of capital importance are: Does this solution exist for $T=\infty$? Does this solution develop any singularity for $T<\infty$? The first question deals with the problem of {\it global existence}, whereas the second is related to the question of {\it blow up} in finite time, meaning that the solution becomes unbounded for finite values of $T$.

We can improve Yan's achievements \cite{yan} regarding the global existence of solutions of \eqref{2.2.1} with the following global existence result, in which \eqref{1.0.8} is of vital importance:
\begin{theorem}\label{teo2.4}
Given $u_0\in H^3(\R)$, let $u(t,\cdot)\in H^3(\R)$ be the corresponding unique solution of \eqref{2.2.2}. If $m_0 \in L^1(\R)\cap H^1(\R)$ does not change sign, then the solution $u$ exists globally in $C^0([0,\infty);H^3(\R))\cap C^1([0,\infty); H^2(\R)).$
\end{theorem}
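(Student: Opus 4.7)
The strategy is to view \eqref{2.2.1} in momentum form as the transport equation $m_t+um_x=0$ and combine the sign-preservation of Theorem \ref{teo2.1} with the conservation law \eqref{1.0.8} to produce a uniform-in-time $L^1$ bound on $m$. Since local well-posedness in $H^3(\R)$ is already provided by \cite{yan,himjmp2014}, it suffices to control $\|u(t,\cdot)\|_{H^3}$ on every finite time interval, and for this the blow-up alternative reduces the task to verifying $\int_0^{T^*}\|u_x(s,\cdot)\|_\infty\,ds<\infty$ along the maximal existence interval $[0,T^*)$.

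By Theorem \ref{teo2.1}, assuming without loss of generality that $m_0\geq 0$, we have $m(t,\cdot)\geq 0$ and $u(t,\cdot)\geq 0$ for all $t\in[0,T^*)$. Writing \eqref{2.2.1} in divergence form,
\[
\p_t u + \p_x\!\left(\tfrac{1}{2}u^2 + \tfrac{1}{2}u_x^2 - u u_{xx} - u_{tx}\right) = 0,
\]
whose flux belongs to $C_0(\R)$ because $u(t,\cdot)\in H^3(\R)$, one integrates in $x$ to obtain $\int_\R u(t,x)\,dx = \int_\R u_0(x)\,dx$. Combined with $m\geq 0$ and $\int_\R u_{xx}\,dx = 0$, this yields
\[
\|m(t,\cdot)\|_1 = \int_\R m(t,x)\,dx = \int_\R u_0(x)\,dx = \|m_0\|_1.
\]
Because $u = g\ast m$ and $u_x = g'\ast m$ with $g(x)=e^{-|x|}/2$, Young's inequality then gives the uniform bounds $\|u(t,\cdot)\|_\infty\leq \tfrac{1}{2}\|m_0\|_1$ and $\|u_x(t,\cdot)\|_\infty\leq \tfrac{1}{2}\|m_0\|_1$ valid for every $t\in[0,T^*)$. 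In particular, if $T^*<\infty$ then $\int_0^{T^*}\|u_x(s,\cdot)\|_\infty\,ds\leq \tfrac{T^*}{2}\|m_0\|_1<\infty$, contradicting the blow-up criterion; hence $T^*=+\infty$.

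To upgrade this into persistence in $C^0([0,\infty);H^3)\cap C^1([0,\infty);H^2)$, I would close an energy estimate on $\|m(t,\cdot)\|_{H^1}^2$. Multiplying $m_t+um_x=0$ by $m$ and, after differentiation in $x$, by $m_x$, and integrating by parts one obtains the differential inequality
\[
\frac{d}{dt}\|m(t,\cdot)\|_{H^1}^2 \leq C\,\|u_x(t,\cdot)\|_\infty\,\|m(t,\cdot)\|_{H^1}^2,
\]
so Gronwall's inequality together with the uniform bound on $\|u_x\|_\infty$ delivers $\|m(t,\cdot)\|_{H^1}\leq \|m_0\|_{H^1}e^{C\|m_0\|_1\,t}$, and via the equivalence $\|u\|_{H^3}\sim\|m\|_{H^1}+\|u\|_{H^1}$ one concludes that $\|u(t,\cdot)\|_{H^3}$ stays bounded on every compact time interval.

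The main delicate point I foresee is justifying that $u(t,\cdot)\in L^1(\R)$ for each $t\in[0,T^*)$, which is needed so that the conservation identity $\int_\R u\,dx=\int_\R u_0\,dx$ is legitimately applicable. This is handled through the characteristic flow $q(t,x)$ of $u$: from $m(t,q(t,x))=m_0(x)$ and $q_x(t,x)=\exp\bigl(\int_0^t u_x(s,q(s,x))\,ds\bigr)$, the local-in-time boundedness of $\|u_x\|_\infty$ provided by well-posedness gives $c(t)\leq q_x(t,\cdot)\leq C(t)$ on every compact interval, whence a change of variables yields $\|m(t,\cdot)\|_1\leq C(t)\|m_0\|_1<\infty$ and therefore $u(t,\cdot)=g\ast m(t,\cdot)\in L^1(\R)$. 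Once this \emph{a priori} integrability is in place the chain of identities above becomes rigorous and the theorem follows.
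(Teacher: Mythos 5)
Your argument is correct, and its first half (sign preservation via Theorem \ref{teo2.1}, conservation of $\int_\R u\,dx$, hence of $\|m(t,\cdot)\|_1$ when $m$ has a fixed sign, with the characteristic flow justifying the a priori integrability) coincides with the paper's route through Lemma \ref{lema3.1}, Theorem \ref{teo3.1} and Theorem \ref{teo4.1}. The second half is genuinely different. The paper only extracts the \emph{one-sided} bound $-u_x(t,x)\leq\|m_0\|_1$ (Theorem \ref{teo3.1}, part 2) and then feeds it into Lemma \ref{lema5.1}, a direct energy computation on $u$ up to third-order derivatives built around the functional $I[u]=\tfrac14\int_\R(u^2+3u_x^2+4u_{xx}^2+2u_{xxx}^2)\,dx$, whose time derivative happens to involve only $-u_x$ times nonnegative quantities, so the lower bound suffices. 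You instead get the stronger two-sided bound $\|u_x(t,\cdot)\|_\infty\leq\tfrac12\|m_0\|_1$ from Young's inequality applied to $u_x=g'\ast m$, and then close the estimate at the level of the momentum, exploiting the pure transport form $m_t+um_x=0$ to obtain $\frac{d}{dt}\|m\|_{H^1}^2\leq C\|u_x\|_\infty\|m\|_{H^1}^2$ and conclude via $\|u\|_{H^3}\sim\|m\|_{H^1}$. Your version is shorter and arguably more transparent, since the transport structure of $m$ does the work that the paper's three identities \eqref{5.0.1}--\eqref{5.0.3} do by hand; the paper's version has the side benefit that Lemma \ref{lema5.1} only needs the lower bound on $u_x$, which is what makes Corollary \ref{cor2.3} fall out of the same computation. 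One caution: you should not lean on an off-the-shelf ``blow-up alternative'' in terms of $\int_0^{T^*}\|u_x\|_\infty\,ds$, since the paper explicitly notes that the wave-breaking criteria of \cite{yan} do not cover the $b=0$ case; but this is harmless here because your own $H^1$ estimate on $m$ bounds $\|u(t,\cdot)\|_{H^3}$ on compact time intervals, and the standard continuation property of the local well-posedness theory then yields $T^*=\infty$ without any such criterion.
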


The demonstration of Theorem \ref{teo2.4} provides a necessary condition for the wave-breaking of the solutions of \eqref{2.2.2}, as stated by the next result:

\begin{corollary}\label{cor2.3}
Let $u_0\in H^s(\R)$, $s>3/2$, and $T$ the lifespan of the corresponding solution $u(t,x)$ of \eqref{2.2.2}. If $\|u(t,x)\|_\infty$ is bounded as $t\nnearrow T$, then the slope $u_x(t,x)$ of the solution $u$ is bounded near $T$. In particular, there is no wave-breaking of the solutions.
\end{corollary}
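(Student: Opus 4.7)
The plan is to extract from the proof of Theorem~\ref{teo2.4} the Lagrangian transport identity for $m=u-u_{xx}$, and combine it with the convolution representation $u_x=g_x\ast m$ to obtain a uniform bound on $\|u_x(t,\cdot)\|_\infty$ that rules out the wave-breaking scenario.

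First I would rewrite \eqref{2.2.1} in the equivalent transport form $m_t+u\,m_x=0$, and introduce the flow $\phi=\phi(t,x)$ of the velocity field $u$, defined by $\dot\phi=u(t,\phi)$ and $\phi(0,x)=x$. Since $u\in C^0([0,T);H^s(\R))$ with $s>3/2$, the embedding $H^s\hookrightarrow W^{1,\infty}$ guarantees that $\phi$ is well defined for $t\in[0,T)$. A direct computation along the flow gives $\frac{d}{dt}m(t,\phi(t,x))=m_t+u\,m_x=0$, so that $m(t,\phi(t,x))=m_0(x)$ and therefore $\|m(t,\cdot)\|_\infty\leq\|m_0\|_\infty$ on $[0,T)$. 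From $u=g\ast m$ with $g(x)=e^{-|x|}/2$ I then have $u_x=g_x\ast m$; Young's convolution inequality with $\|g_x\|_{L^1(\R)}=1$ yields
\[
\|u_x(t,\cdot)\|_\infty\leq\|g_x\|_{L^1(\R)}\,\|m(t,\cdot)\|_\infty\leq\|m_0\|_\infty\qquad\text{for every }t\in[0,T).
\]
Consequently, whenever $\|u(t,\cdot)\|_\infty$ is bounded as $t\nnearrow T$, so is $\|u_x(t,\cdot)\|_\infty$, and the wave-breaking scenario (in which $u$ remains bounded while $u_x$ becomes unbounded) is impossible.

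The main obstacle will be justifying $m_0\in L^\infty(\R)$ over the full range $s>3/2$: the pointwise transport estimate requires essential boundedness of $m_0$, which is automatic for $s>5/2$ via the Sobolev embedding $H^{s-2}\hookrightarrow L^\infty$ but not for $3/2<s\leq 5/2$. I would handle this by a density argument, approximating $u_0$ in $H^s$ by smoother data $u_0^\varepsilon$ with $m_0^\varepsilon\in L^\infty(\R)$, applying the bound to each approximant, and passing to the limit using the local well-posedness and continuous dependence on initial data already recorded in the introduction.
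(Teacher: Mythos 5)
Your underlying idea is sound and genuinely different from the paper's. The paper proves the corollary by integrating the energy identity \eqref{5.0.1} to get $\frac{d}{dt}\|u(t,\cdot)\|_{H^1}^2\leq 2\|u_x(t,\cdot)\|_\infty\|u(t,\cdot)\|_{H^1}^2$ and then Gronwall, yielding \eqref{5.0.5}; it never touches the momentum. You instead exploit the fact that for $b=0$ the momentum is purely transported, $m(t,\phi(t,x))=m_0(x)$ (this is the paper's own identity \eqref{3.0.6} with $k=1$, and Lemma \ref{lema3.2} supplies the fact that $\phi(t,\cdot)$ is an increasing diffeomorphism of $\R$, so the supremum is preserved). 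Combined with $u_x=g_x\ast m$ and $\|g_x\|_{L^1}=1$, this gives the \emph{unconditional} bound $\|u_x(t,\cdot)\|_\infty\leq\|m_0\|_\infty$, which is strictly stronger than the corollary whenever it applies: the hypothesis on $\|u(t,\cdot)\|_\infty$ becomes superfluous. That is an attractive feature, and it is the same mechanism the paper uses elsewhere (Theorem \ref{teo3.1}, part 2, which gets the one-sided bound $-u_x\leq\|m_0\|_1$ from the same transport identity).

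The genuine gap is the low-regularity range, and your proposed repair does not close it. The corollary is stated for all $s>3/2$, whereas your estimate needs $m_0=u_0-u_0''\in L^\infty(\R)$, i.e.\ effectively $s>5/2$ (for $3/2<s\leq 5/2$ the momentum $m_0\in H^{s-2}$ need not even be a function, so the pointwise identity $m(t,\phi(t,x))=m_0(x)$ and the quantity $\|m_0\|_\infty$ are not available). The density argument fails because the constant in your bound is $\|m_0^\varepsilon\|_\infty$, which is \emph{not} controlled by $\|u_0^\varepsilon\|_{H^s}$ for $s\leq 5/2$: mollifying a data whose momentum is unbounded produces $\|m_0^\varepsilon\|_\infty\to\infty$ as $\varepsilon\to0$, so the uniform bound on $\|u_x^\varepsilon(t,\cdot)\|_\infty$ degenerates and nothing survives the passage to the limit, no matter how good the continuous dependence in $H^s$ is. As written, your argument therefore proves the corollary (indeed a stronger statement) only under the additional hypothesis $m_0\in L^\infty(\R)$, not over the full range $s>3/2$ claimed; to cover $3/2<s\leq 5/2$ you would need an argument, like the paper's $H^1$ energy estimate, that never invokes the sup norm of the momentum.
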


We recall that for dealing with both global existence and wave-breaking problems, we {\it usually} need:
\begin{itemize}
    \item Local existence results established;
    \item Qualitative properties of the solutions, quite often manifested through conserved quantities or, which is the same, conserved currents for the equation must be known. In the absence of suitable conserved currents, other similar information, such as estimates on the solutions should be at our disposal.
\end{itemize}

Apart from the cases $k=\pm1$, \eqref{1.0.6} does not have other known conservation laws, which means that we do not have enough information to determine whether the local results can be extended to a global property for general $k$ using the conserved quantities. 

Even in the case $k=1$, the only known conservation law for the equation in \eqref{2.2.2} is useless for extending the local solution to global one. Therefore, in order to prove Theorem \ref{teo2.4} we show that the solution of \eqref{2.2.2} is bounded from above by the $H^3(\R)-$norm of the initial data, provided that the $x-$derivative of the solution is bounded from below. The last property comes from the fact that the sign of the initial momentum $m_0$ is invariant.

In \cite{qiao} it was shown that \eqref{2.2.1} has some particular travelling wave solutions called peakons shaping as the ones of the CH equation, that is, $u(t,x)=ce^{-|x-ct|}$. We will show in Section \ref{sec6} that \eqref{1.0.6} has the solution $u(t,x)=c^{1/k}e^{-|x-ct|}$. On the other hand, it is well know that under certain circumstances, if $u(t,x)$ is a solution of the CH equation, then for each fixed $t$, $x\mapsto u(t,x)$ behaves like a peakon solution, see \cite{him-cmp}. Our next result is in line with this fact, more precisely, it is concerned with persistence properties of the asymptotic behaviour of the solutions on compact subsets of $[0,T)$, where $T$ is the lifespan of the solution. Let $T_0\in(0,T)$ be an arbitrary value for which the solution exists and let $\I=[0,T_0]$. 

\begin{theorem}\label{teo2.5}
Assume that $u\in C^0(\I,H^s(\R))$, $s>3/2$, is a solution of \eqref{1.0.10}. If, for some $\theta\in(0,1)$, 
    \bb\label{2.2.3}
    |u_0(x)|\sim O(e^{-\theta |x|})\quad\text{and}\quad |u_0'(x)|\sim O(e^{-\theta |x|})\quad\text{as}\quad |x|\rightarrow\infty,
    \ee
then 
    \bb\label{2.2.4}
    |u(t,x)|\sim O(e^{-\theta |x|})\quad\text{and}\quad |u_x(t,x)|\sim O(e^{-\theta |x|})\quad\text{as}\quad |x|\rightarrow\infty,
    \ee
uniformly in $\I$.
\end{theorem}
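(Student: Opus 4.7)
The plan is to adapt the weighted $L^p$ energy method of Himonas--Misio\l{}ek--Ponce--Zhou, as developed for $g$-kbCH-type models in \cite{himjmp2014}, to a nonlocal reformulation of \eqref{1.0.10}. First, applying the Green operator $(1-\p_x^2)^{-1}=g\ast\cdot$ to the evolution equation and using $\p_x^2g\ast f=g\ast f-f$ together with the identities $u^ku_{xxx}=\p_x^2(u^ku_x)-3ku^{k-1}u_xu_{xx}-k(k-1)u^{k-2}u_x^3$ and $u^{k-1}u_xu_{xx}=\tfrac12\p_x(u^{k-1}u_x^2)-\tfrac{k-1}{2}u^{k-2}u_x^3$, I would rewrite \eqref{1.0.10} as
\bb\label{pp-1}
u_t+u^ku_x=-\f{3k}{2}\,\p_xg\ast(u^{k-1}u_x^2)+\f{k(k-1)}{2}\,g\ast(u^{k-2}u_x^3),
\ee
the second right-hand term vanishing at $k=1$, so the formally singular factor $u^{-1}$ never actually arises. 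Differentiating \eqref{pp-1} in $x$ and using $\p_x^2g\ast f=g\ast f-f$ once more produces the companion equation for $u_x$, whose right-hand side is quadratic-plus-nonlocal in $u_x$. Because $s>3/2$ yields $H^s\hookrightarrow W^{1,\infty}$, the hypothesis $u\in C^0(\I,H^s(\R))$ furnishes a constant $M>0$ with $\|u(t,\cdot)\|_\infty+\|u_x(t,\cdot)\|_\infty\leq M$ for every $t\in\I$.

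The second step introduces the truncated weight $\phi_N(x)=\min\{e^{\theta|x|},e^{\theta N}\}$, which is Lipschitz, bounded, satisfies $|\phi_N'|\leq\theta\phi_N$ a.e., is submultiplicative ($\phi_N(x)\leq\phi_N(x-y)\phi_N(y)$), and increases pointwise to $e^{\theta|x|}$ as $N\to\infty$. Submultiplicativity gives the weighted convolution estimate
\bb\label{pp-conv}
\|\phi_N(g\ast F)\|_p\leq\|\phi_Ng\|_1\|\phi_NF\|_p,\qquad \|\phi_N(\p_xg\ast F)\|_p\leq\|\phi_N\p_xg\|_1\|\phi_NF\|_p,
\ee
with both kernel $L^1$ norms finite and uniformly bounded in $N$ precisely because $\theta\in(0,1)$. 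Multiplying \eqref{pp-1} by $\phi_N^p|u|^{p-2}u$ and its companion by $\phi_N^p|u_x|^{p-2}u_x$, then integrating in $x$, I would rewrite each transport integral via $u_x|u|^{p-2}u=\p_x(|u|^p)/p$ (likewise for $u_x$) and integrate by parts once: this converts the apparent factor of $p$ into $\phi_N'/\phi_N\leq\theta$, leaving only an $O(1/p)$ remainder originating from $\p_x(u^k)$. The nonlocal terms are then bounded by \eqref{pp-conv} and H\"older, with constants depending only on $M$, $\theta$, and $k$. Summing the two estimates produces the differential inequality
\bb\label{pp-gron}
\f{d}{dt}\bigl(\|\phi_Nu(t)\|_p+\|\phi_Nu_x(t)\|_p\bigr)\leq C\bigl(\|\phi_Nu(t)\|_p+\|\phi_Nu_x(t)\|_p\bigr),
\ee
with $C=C(M,\theta,k)$ independent of both $N$ and of $p\geq p_0$.

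Gronwall's inequality and two successive limits close the argument. Since $\phi_N$ is bounded and $u(t,\cdot)\in H^s(\R)\subset L^2\cap L^\infty$, one has $\phi_Nu(t,\cdot)\in\bigcap_{p\geq 2}L^p(\R)$ with $\|\phi_Nu(t,\cdot)\|_p\to\|\phi_Nu(t,\cdot)\|_\infty$ as $p\to\infty$, and the same for $\phi_Nu_x(t,\cdot)$ and the initial data. Passing $p\to\infty$ in the Gronwall bound derived from \eqref{pp-gron} yields
\bb\label{pp-conclude}
\|\phi_Nu(t,\cdot)\|_\infty+\|\phi_Nu_x(t,\cdot)\|_\infty\leq e^{CT_0}\bigl(\|\phi_Nu_0\|_\infty+\|\phi_Nu_0'\|_\infty\bigr),\quad t\in\I,
\ee
and by \eqref{2.2.3} the right-hand side is bounded by a constant independent of $N$. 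The pointwise convergence $\phi_N(x)\nnearrow e^{\theta|x|}$ then upgrades \eqref{pp-conclude} to the desired bound \eqref{2.2.4}. The main technical difficulty lies in securing a $p$-uniform constant in \eqref{pp-gron}, since a naive estimate of the transport term would lose a factor of $p$ and preclude the $L^\infty$ passage; the integration-by-parts device above is precisely what cures this by trading $p$ for the bounded quantity $\phi_N'/\phi_N$. A secondary concern, the formal $u^{-1}$ appearing in \eqref{pp-1} at $k=1$, is neutralised by the coefficient $k(k-1)$ vanishing in exactly that case.
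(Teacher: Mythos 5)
Your proposal is correct and follows essentially the same route as the paper: the same nonlocal reformulation $u_t+u^ku_x=\frac{k(k-1)}{2}\Lambda^{-2}(u^{k-2}u_x^3)-\frac{3k}{2}\p_x\Lambda^{-2}(u^{k-1}u_x^2)$ and its $x$-derivative, the same truncated weights $\phi_N=\min\{e^{\theta|x|},e^{\theta N}\}$, weighted $L^p$ energy estimates closed by the uniform bound $\phi_N(x)\int e^{-|x-y|}\phi_N(y)^{-1}dy\leq c_0$ (which you obtain via submultiplicativity rather than by citation), Gronwall, and the successive limits $p\to\infty$, $N\to\infty$. The only cosmetic differences are your use of $L^p$ in place of the paper's $L^{2p}$ and an integration by parts in the transport term where the paper bounds $u^{k-1}u_x$ directly in $L^\infty$; neither affects the argument.
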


The demonstration of Theorem \ref{teo2.5} is based on the works by Himonas and co-authors \cite{himjmp2014,him-cmp}. Our last theorem is a different unique continuation result for the solutions of \eqref{1.0.6}, whose demonstration is strongly dependent on Theorem \ref{teo2.5}.

\begin{theorem}\label{teo2.6}
Let $u\in C^0(\I,H^s(\R))$, $s\geq 3$, be a solution of \eqref{1.0.10}. Assume that:
\begin{enumerate}
    \item\label{teo1.6-2} For some $\al\in(1/(k+1),1)$, 
    \bb\label{2.2.5}
    |u_0(x)|\sim o(e^{-x})\quad\text{and}\quad |u_0'(x)|\sim O(e^{-\al x})\quad\text{as}\quad x\nnearrow\infty,
    \ee
    and
    \item\label{teo2.3-3} There exists $t_1\in\I$, $t_1>0$, such that
    \bb\label{2.2.6}
    |u(t_1,x)|\sim o(e^{-x})\quad\text{as}\quad x\nnearrow\infty.
    \ee

If
\begin{enumerate}
    \item $k=1$, then $u\equiv0$.
    \item $k$ is even and $m_0(x)\geq0$, for all $x\in\R$, then $u\equiv0$.
    \item $k$ is odd and either $m_0(x)\geq0$ or $m_0(x)\leq0$, for all $x\in\R$, then $u\equiv0$.
\end{enumerate}    
\end{enumerate}
\end{theorem}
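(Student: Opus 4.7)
The plan is to combine the persistence result of Theorem \ref{teo2.5} with a careful asymptotic analysis at $+\infty$ of the non-local representation $u = g \ast m$, handling the case $k = 1$ separately via a dedicated identity derived from the equation. First I would apply Theorem \ref{teo2.5} with rate $\theta = \alpha$ (using that $|u_0(x)| \sim o(e^{-x})$ falls inside $O(e^{-\alpha x})$ since $\alpha < 1$) to propagate
\[
|u(t, x)| + |u_x(t, x)| = O(e^{-\alpha x}) \quad \text{as } x \to +\infty, \text{ uniformly in } t \in \I.
\]
Using $g(x) = e^{-|x|}/2$ and $u = g \ast m$, I would then record the identities
\[
(u - u_x)(t, x) = e^{-x}\alpha(t, x), \qquad (u + u_x)(t, x) = e^x \beta(t, x),
\]
with $\alpha(t, x) := \int_{-\infty}^x e^y m(t, y)\,dy$ and $\beta(t, x) := \int_x^\infty e^{-y} m(t, y)\,dy$, so that $2\, e^x u(t, x) = \alpha(t, x) + e^{2x} \beta(t, x)$.

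For cases (b) and (c), the sign hypothesis on $m_0$ together with Theorem \ref{teo2.1} forces $m(t_1, \cdot)$ to retain its sign, and hence so do $\alpha(t_1, \cdot)$ and $\beta(t_1, \cdot)$. The assumption $|u(t_1, x)| \sim o(e^{-x})$ gives $e^x u(t_1, x) \to 0$ as $x \to +\infty$; since the two summands $\alpha(t_1, x)$ and $e^{2x}\beta(t_1, x)$ share a sign and sum to a quantity tending to zero, each individually tends to zero. Because $\alpha(t_1, \cdot)$ is monotonic with $\alpha(t_1, -\infty) = 0$ and (by the preceding step) $\alpha(t_1, +\infty) = 0$, it is identically zero; differentiating $\alpha_x = e^x m$ then yields $m(t_1, \cdot) \equiv 0$, whence $u(t_1, \cdot) = g \ast m(t_1, \cdot) \equiv 0$, and propagating along characteristics (where $m$ is transported) gives $m_0 \equiv 0$, $u_0 \equiv 0$, and finally $u \equiv 0$ on $\I \times \R$ by forward uniqueness.

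For case (a), $k = 1$ and no sign hypothesis is available, so I would instead apply $(1-\partial_x^2)^{-1}$ to equation \eqref{2.2.1} and reduce (using $u u_{xx} = \partial_x(u u_x) - u_x^2$ and $g_{xx} = g - \delta$) to the identity
\[
u_t + u u_x = -\frac{3}{2}\, g_x \ast u_x^2.
\]
Integrating in $t$ on $[0, t_1]$, multiplying by $e^x$, and letting $x \to +\infty$: the left-hand side $e^x[u(t_1, x) - u_0(x)] \to 0$ by the two decay hypotheses; the transport term $\int_0^{t_1} e^x u u_x\,dt$ vanishes because $|e^x u u_x| = O(e^{(1-2\alpha)x})$ uniformly in $t$ (using $2\alpha > 1$); and splitting the kernel $g_x$ about $x$ gives
\[
\lim_{x \to +\infty} e^x (g_x \ast u_x^2)(t, x) = -\frac{1}{2}\int_\R e^y u_y^2(t, y)\,dy.
\]
A dominated-convergence argument interchanging this limit with the $t$-integral then produces
\[
0 = \frac{3}{4}\int_0^{t_1}\!\!\int_\R e^y u_y^2(t, y)\,dy\,dt,
\]
forcing $u_y \equiv 0$ on $[0, t_1] \times \R$. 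Since $u(t, \cdot) \in H^s(\R) \hookrightarrow C_0(\R)$ vanishes at $\pm\infty$, this yields $u \equiv 0$ on $[0, t_1] \times \R$, and hence on $\I \times \R$ by well-posedness.

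The principal difficulty will be justifying the dominated convergence in case (a): this demands a uniform-in-$t$ bound on $e^x(g_x \ast u_x^2)(t, x)$ for $x$ large, which is exactly where the sharp hypothesis $\alpha > 1/(k+1) = 1/2$ is needed, so that $\int_\R e^y u_y^2\,dy$ is both finite and uniformly bounded in $t$ via the persistence estimate. A secondary technicality is that Theorem \ref{teo2.5} is stated for two-sided decay whereas the hypotheses of Theorem \ref{teo2.6} only concern $x \to +\infty$; this should be resolved by revisiting the proof of Theorem \ref{teo2.5} to verify that it requires only one-sided data (the argument is local on the right tail).
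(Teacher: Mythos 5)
Your proposal is correct, but it splits into two genuinely different relationships with the paper's argument. For case (a) ($k=1$) you follow essentially the paper's route: the identity $u_t+uu_x=-\tfrac32 g_x\ast u_x^2$ is exactly the paper's reformulation \eqref{7.0.1}--\eqref{7.0.2} with $k=1$, and your limit $\lim_{x\to\infty}e^x(g_x\ast u_x^2)=-\tfrac12\int_\R e^yu_y^2\,dy$ is the content of Proposition \ref{prop7.4}(a); the paper phrases the conclusion as a contradiction between the lower bound $\kappa e^{-x}$ and the $o(e^{-x})$ decay of $\int_0^{t_1}F_\tau\,d\tau$, while you extract the identity $\int_0^{t_1}\int_\R e^yu_y^2=0$ directly, but the mechanism (and the role of $\alpha>1/2$ in killing the right tail and justifying dominated convergence) is the same. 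For cases (b) and (c), however, you take a genuinely different and more elementary route: the paper pushes all three cases through the sign analysis of the integrand of $G_t$ in Proposition \ref{prop7.4}, using Theorem \ref{teo2.1} parts 3 and 4 (the signs of $u$ and $u+u_x$ via \eqref{4.1.2}) and the parity of $k$ to make $u^{k-2}u_x^2(u+u_x)\ge 0$; you instead exploit the sign of $m(t_1,\cdot)$ directly in the representation $2e^xu(t_1,x)=\alpha(t_1,x)+e^{2x}\beta(t_1,x)$, so that $o(e^{-x})$ decay of $u(t_1,\cdot)$ forces $m(t_1,\cdot)\equiv0$ by monotonicity of $\alpha$. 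This buys you simplicity (no $F_t$ estimates, and not even Theorem \ref{teo2.5}, are needed in those cases) and in fact a stronger statement: the parity of $k$ and the decay hypotheses on $u_0,u_0'$ become superfluous once $m_0$ has a fixed sign, which is an improvement worth recording. Two minor points: your transport step at the end of (b)/(c) is legitimate since $m(t,y(t,x))=m_0(x)$ by \eqref{3.0.6}, and the one-sided versus two-sided issue in invoking Theorem \ref{teo2.5} that you flag is real, but the paper's own proof of Proposition \ref{prop7.4} relies on the same one-sided reading, so you are no worse off than the original on that score.
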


As previously mentioned, in \cite[Theorem 1.2]{himjmp2014} they proved a similar result to our Theorem \ref{teo2.6}, but there are significant differences: \cite[Theorem 1.2]{himjmp2014} is concerned with \eqref{1.0.5} with $c=1$. Moreover, their theorem considered the situation $k=1$ and $b\in[0,3]$ or $b=k$ and $k$ is a positive odd integer. In our case we have proved a result for \eqref{1.0.5} with $c=0$ and $k$ is an arbitrary positive integer. The price we pay, however, is the imposition of some restrictions on the initial data.

\subsection{Organization of the paper} In Section \ref{sec3} we prove several technical results that will be useful in the demonstration of our main contributions. Next, in Section \ref{sec4}, we study the behavior of compactly supported data and provide the continuation of solutions for the case $k=1$ to prove theorems \ref{teo2.1}--\ref{teo2.3}. In Section \ref{sec5} we prove Theorem \ref{teo2.4}. In Section \ref{sec6} we study some special solutions of the equation \eqref{1.0.6}, more precisely, (multi-)peakons and other wave solutions of \eqref{1.0.6} for any integer $k$. For the cases $k=1$ and $k=-1$ we also use the conserved quantities \eqref{1.0.8} and \eqref{1.0.9}, respectively, to construct solutions compatible with them. Finally, in Section \ref{sec7} we prove theorems \ref{teo2.5} and \ref{teo2.6} and our discussions and conclusions are presented in sections \ref{sec8} and \ref{sec9}, respectively.

\subsection{Challenges and novelties of the paper} 

The first unique continuation results for equation \eqref{2.2.1}, given in Theorem \ref{teo2.2}, are based on some ideas introduced in \cite{linares} and the use of the conserved quantity \eqref{1.0.8}, as observed in \cite{igor-dgh}. The fact that the integrand in \eqref{1.0.8} is not necessarily positive nor negative brings some complications in the use of \eqref{1.0.8}. In order to overcome this problem we then find conditions for the solutions of \eqref{2.2.2} to not change their sign, which then implies that the integral kernel in \eqref{1.0.8} is either non-positive or non-negative. As a consequence of this fact we show that the $L^1(\R)-$norm of the solution and the corresponding momentum are conserved, see Theorem \ref{teo3.1}, which will be of great relevance to prove Theorem \ref{teo2.4}, that guarantees the global existence of solutions of the problem \eqref{2.2.2}. We observe that the Cauchy problem \eqref{2.2.2} has very little structure and the only structural property known for the equation in \eqref{2.2.2} is the invariant \eqref{1.0.8}, which makes the proof of global existence quite challenging. In order to prove it, we show that if the initial data is in $H^3(\R)$ and its momentum does not change sign, then the $x$ derivative of the solution $u$ of \eqref{2.2.2} is bounded from below by the negative of the $L^1(\R)-$norm of the initial momentum. This is enough to assure that the $H^3(\R)-$norm of the solution is bounded, for each $t\in\R$.

Very often, in our results we require that the solution $u$ does not change its sign. The question is: how can we guarantee that? We show in Theorem \ref{teo3.1} (in Section \ref{sec3}) that if the initial momentum $m_0$ does not change its sign, then such property is inherited by the corresponding solution.

Beyond the qualitative properties given in theorems \ref{teo2.1}--\ref{teo2.4}, we also consider peakon and cliff solutions of the equation \eqref{1.0.6}. We show that such solutions may exist for any integer $k\neq0$ (the case $k=0$ is not considered because the resulting equation is linear). We pay considerable attention to equation \eqref{1.0.10} with $k=-1$, which brings a considerable singularity to the problem, but has the $H^1(\R)-$norm of the solutions as a conserved quantity. We find explicit solutions showing peakon-peakon and peakon-antipeakon dynamics. As far as we know, this is the first time that a singular non-evolution equation of the type \eqref{1.0.6} has peakon solutions of the same shape of the one admitted by the Camassa-Holm equation \cite{chprl} reported, although some singular evolution equations having peakon type solutions are known, {\it e.g.}, see \cite{pri-proc}.

\section{Preliminaries and technical results}\label{sec3}

In this section we prove some technical results that will be relevant in the proofs of theorems \ref{teo2.1}--\ref{teo2.4}. We begin with the following:

\begin{lemma}\label{lema3.1}
Let $u=u(t,x)$ be a solution of 
\begin{align}\label{3.0.1}
    u_t-u_{txx} + uu_x-uu_{xxx}=0
\end{align}
such that $u(0,x)=:u_0(x)$ and $u(t,\cdot)$, $u_x(t,\cdot)$, $u_{xx}(t,\cdot)$ and $u_{tx}(t,\cdot)$ are integrable and vanish at $x=\pm\infty$ for all values of $t$ such that the solution exists. Then
\bb\label{3.0.2}
\int_\R udx=\int_\R u_0dx=\int_\R m dx=\int_\R m_0dx,
\ee
where $m_0:=u_0-u_0''$.
\end{lemma}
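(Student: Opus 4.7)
The plan is to rewrite the PDE as a local conservation law of the form $\partial_t m = \partial_x F$ for a polynomial flux $F$ in $u$ and its $x$-derivatives, then integrate in $x$ and use the decay/integrability hypotheses to kill the boundary terms, and finally connect $\int_\R m\,dx$ to $\int_\R u\,dx$ by one integration by parts.

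First, I would reorganize \eqref{3.0.1} as $\partial_t(u-u_{xx}) = uu_{xxx}-uu_x$. Using the Leibniz identities $uu_x = \partial_x(u^2/2)$ and $uu_{xxx}=\partial_x(uu_{xx})-u_xu_{xx}=\partial_x(uu_{xx})-\partial_x(u_x^2/2)$, the right-hand side collapses into a single $x$-derivative, yielding
\[
\partial_t m \;=\; \partial_x\!\left( uu_{xx} - \tfrac12 u^2 - \tfrac12 u_x^2 \right).
\]
This is, up to the trivial time-derivative piece, the conservation law corresponding to the current $(C^0,C^1)$ recorded in \eqref{1.0.2} with the specialization $a=c=1$, $b=0$, $k=1$.

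Second, I would integrate this identity in $x$ over $\R$. The integrability of $u,u_x,u_{xx},u_{tx}$ together with their vanishing at $\pm\infty$ is what makes the argument go through: it justifies exchanging $\partial_t$ with $\int_\R$, and it forces the flux $uu_{xx}-u^2/2-u_x^2/2$ to vanish at $x=\pm\infty$. Hence
\[
\frac{d}{dt}\int_\R m\,dx \;=\; 0,
\qquad\text{so}\qquad
\int_\R m\,dx \;=\; \int_\R m_0\,dx.
\]

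Third, I would close the chain by one integration by parts: since $u_x(t,\cdot)$ vanishes at $\pm\infty$ we have $\int_\R u_{xx}\,dx=0$, so $\int_\R m\,dx=\int_\R u\,dx$, and the same reasoning applied at $t=0$ gives $\int_\R m_0\,dx=\int_\R u_0\,dx$. Concatenating these four equalities produces \eqref{3.0.2}.

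The computation is essentially routine, so no serious mathematical obstacle is anticipated; the only point demanding real care is checking that the stated decay and integrability hypotheses are strong enough both to commute $\partial_t$ with the spatial integral and to discard the flux contribution at $x=\pm\infty$, and indeed the assumption that $u,u_x,u_{xx}$ and $u_{tx}$ are integrable and have vanishing limits at $\pm\infty$ is tailored precisely for this.
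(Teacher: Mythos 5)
Your proposal is correct and follows essentially the same route as the paper: both rest on the divergence form of the equation coming from the conserved current \eqref{1.0.2} with $a=c=1$, $b=0$, $k=1$, followed by integration over $\R$ and the single integration by parts $\int_\R u_{xx}\,dx=0$ to identify $\int_\R m\,dx$ with $\int_\R u\,dx$. The only (cosmetic) difference is that you place the density as $m$ and absorb $u_{txx}=\partial_x u_{tx}$ into the flux, whereas the paper keeps the density as $u$ and conserves $\mathcal{H}_0[u]=\int_\R u\,dx$ directly.
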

\begin{proof}
We note that \eqref{1.0.2} is a conserved vector for \eqref{3.0.1}, which means that
\bb\label{3.0.3}
\p_t(u)+\p_x\Big(\f{u^2}{2}+\f{u_x^2}{2}-uu_{xx}-u_{tx}\Big)=0
\ee
on the solutions of \eqref{2.2.1}.

Let $\mathcal{ H}_0[u]$ be given by \eqref{1.0.8}, where $u$ is the solution of \eqref{3.0.1} with the initial datum $u_0$. From \eqref{3.0.3} we have
$$
\f{d}{dt}\mathcal{ H}_0[u]=\f{d}{dt}\int_\R u\,dx=\int_\R u_t\,dx=-\left.\Big(\f{u^2}{2}+\f{u_x^2}{2}-uu_x-u_{tx}\Big)\right|_{-\infty}^\infty=0,
$$
which implies that $\mathcal{H}_0[u]:=\mathcal{H}_0[u(t,x)]=\mathcal{H}_0[u(0,x)]=:\mathcal{H}_0[u_0]$. This proves the first equality. 

Now we observe that
$$
\int_\R mdx=\int_\R (u-u_{xx})dx=\mathcal{ H}[u]-\int_\R u_{xx}dx=\mathcal{ H}[u]-\left.u_{x}\right|_{-\infty}^\infty=\mathcal{ H}[u],
$$
which is enough to prove \eqref{3.0.2}.
\end{proof}

Our next lemma is similar to \cite[Theorem 3.1]{const} and our demonstration follows closely the original ideas. The result in \cite[Corollary 2.1]{yan} assures that if $u_0\in H^3(\R)$, then we have a unique solution $u\in C^0([0,T),H^3(\R))\cap C^1([0,T),H^2(\R))$.

\begin{lemma}\label{lema3.2}
Given $u_0\in H^3(\R)$, let $u\in C^1([0,T),H^2(\R))$ be the corresponding unique solution of \eqref{1.0.1}. Then the initial value problem
\bb\label{3.0.4}
\begin{cases}
    \partial_t y(t,x) = u(t,y)^k,\\
    \\
    y(0,x) = x,
\end{cases}
\ee
where $k$ is a positive integer, has a unique solution $y(t,x)$ such that $y_x(t,x)>0$ for any $(t,x)\in [0,T)\times \R$. Moreover, for each $t\geq 0$ fixed, $y(t,\cdot)$ is an increasing diffeomorphism on the line.
\end{lemma}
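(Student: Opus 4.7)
The plan is to treat \eqref{3.0.4} as a family of ODEs indexed by $x$, driven by the (known) vector field $F(t,y):=u(t,y)^k$, and apply the classical Cauchy--Lipschitz theory. First I would establish regularity of $F$. Since $u\in C^0([0,T),H^3(\R))\cap C^1([0,T),H^2(\R))$ and the Sobolev embedding $H^3(\R)\hookrightarrow C^2(\R)$ holds, for each $T_0\in(0,T)$ the functions $u$, $u_x$ and $u_{xx}$ are uniformly bounded on $[0,T_0]\times\R$, and $(t,y)\mapsto u(t,y)^k$ is continuous in $t$ and $C^1$ (in particular globally Lipschitz in $y$, uniformly on $[0,T_0]$) because $\partial_y(u^k)=k u^{k-1}u_x$ is bounded there.

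Next I would invoke the Picard--Lindel\"of theorem to produce, for each fixed $x\in\R$, a unique maximal solution $y(\cdot,x)$ of \eqref{3.0.4}. The a priori bound
$$
|y(t,x)-x|\le \int_0^t |u(s,y(s,x))|^k\,ds\le t\,\|u\|_{L^\infty([0,T_0]\times\R)}^{\,k}
$$
rules out finite-time blow-up on $[0,T_0]$, hence $y(t,x)$ exists on the whole interval $[0,T)$. By standard smooth-dependence-on-initial-data results for ODEs with $C^1$ vector fields, $y$ is $C^1$ jointly in $(t,x)$.

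To obtain $y_x>0$ I would differentiate \eqref{3.0.4} with respect to $x$ and set $w(t,x):=y_x(t,x)$, yielding the linear equation
$$
\partial_t w(t,x)=k\,u(t,y(t,x))^{k-1}u_x(t,y(t,x))\,w(t,x),\qquad w(0,x)=1,
$$
whose unique solution is the exponential
$$
y_x(t,x)=\exp\!\left(k\int_0^t u\bigl(s,y(s,x)\bigr)^{k-1}u_x\bigl(s,y(s,x)\bigr)\,ds\right),
$$
which is strictly positive because the integrand is bounded on $[0,T_0]\times\R$. Consequently $y(t,\cdot)$ is strictly increasing. Finally, the a priori bound above shows $y(t,x)\to\pm\infty$ as $x\to\pm\infty$, so $y(t,\cdot)$ is a continuous, strictly monotonic bijection of $\R$ onto $\R$, and its positive $C^1$ derivative makes it a $C^1$-diffeomorphism of the line.

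The only step that requires a small amount of care rather than pure citation is the global-in-$x$ Lipschitz bound needed for Picard--Lindel\"of (to rule out escape to infinity in $y$); that is what the Sobolev embedding $H^3\hookrightarrow C^2$, together with continuity of $t\mapsto u(t,\cdot)$ in $H^3$, is used for. Everything else is routine once this bound and the boundedness of $u_x$ on $[0,T_0]\times\R$ are in hand.
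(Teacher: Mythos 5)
Your proposal is correct and takes essentially the same route as the paper: Picard--Lindel\"of for each fixed $x$, the variational equation for $y_x$ solved by an exponential (hence strictly positive), and uniform bounds on $u$ and $u_x$ via Sobolev embedding to conclude that $y(t,\cdot)$ is an increasing diffeomorphism onto $\R$. Your exponential formula, which contains the time integral $\int_0^t$, is in fact the correct version of the expression displayed in the paper's proof, and your a priori bound $|y(t,x)-x|\le t\|u\|_\infty^k$ gives surjectivity a little more directly than the paper's integration of the bounds on $y_x$.
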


\begin{proof}
Since $u\in C^1([0,T)\times\R)$, then both $u(t,\cdot)$ and $u_x(t,\cdot)$ are bounded and Lipschitz, while $u(\cdot,x)$ and $u_x(\cdot,x)$ are $C^1$. For each fixed $x\in\R$, the Picard-Lindelöf Theorem \cite[page 10]{valls} assures the existence of a unique continuous solution $y(\cdot,x)$ satisfying the problem \eqref{3.0.4} and defined on $[0,T)$, for some $T>0$.

If we let $x$ change, we can then differentiate \eqref{3.0.4} and obtain
$$%\bb\label{3.0.5}
\begin{cases}
    \partial_t y_x(t,x) = k u(t,y(t,x))^{k-1}u_x(t,y(t,x))y_x(t,x),\\
    \\
    y_x(0,x) = 1.
\end{cases}
$$%\ee

Fixing $x$ and defining $T_x:=\sup\{t\in[0,T),\,\,y_x(t,x)>0\}$, for each $t\in[0,T_x)$, we have
\bb\label{3.0.5}
y_x(t,x)=\exp{\left(k u(t,y(t,x))^{k-1}u_x(t,y(t,x))\right)}>0.
\ee

The conditions on $u$ imply that $y_x(t,\cdot)$ is continuous. We claim that $T_x=T$. Actually, if it were not true, from the continuity of $y_x(t,\cdot)$ we would have $y_x(t,\overline{x})=0$ for some $\overline{x}\in\R$, which is a clear contradiction with \eqref{3.0.5}.

The continuity of $y(t,\cdot)$ implies that $J_t:=\{y(t,x),\,\,x\in\R\}$ is an interval. Again, by \eqref{3.0.5} we are forced to conclude that $y(t,\cdot)$ is a diffeomorphism between $\R$ and $J_t$. To conclude the demonstration we need to show that $J_t=\R$.

By the Sobolev Embedding Theorem \cite[p. 317]{taylor}, given $t\in(0,T)$, the function $u_x(s,z)$ is uniformly bounded for each $[s,z]\in[0,t]\times\R$ and, consequently, there exists a positive number $k_t>0$ such that $e^{-k_t}\leq y_x(t,x)\leq e^{k_t}$,
which, after integration, yields the inequality
$ e^{-k_t}x\leq y(t,x)\leq e^{k_t}x$. This implies that $J_t$ cannot have either lower or upper bounds.
\end{proof}

\begin{theorem}\label{teo3.1}Let $u_0\in H^3(\R)$ be an initial data for $(\ref{1.0.6})$, with corresponding solution $u$.
\begin{enumerate}
    \item\label{teo3.1-1} Assume that the sign of $m_0$ does not change. Then $\sign{(u)}=\sign{(u_0)}=\sign{(m)}=\sign{(m_0)}$ and they do not change;
    \item\label{teo3.1-2} Assume that $k=1$ and $m_0\in L^1(\R)$. Then $- u_{x}(t,x)\leq\|m_0\|_{1}$, for any $(t,x)\in[0,T)\times\R$.
\end{enumerate}
\end{theorem}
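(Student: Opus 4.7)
The plan is to exploit the transport structure the equation acquires in terms of the momentum $m = u - u_{xx}$. Using $u_t - u_{txx} = m_t$ and $u^k u_x - u^k u_{xxx} = u^k m_x$, equation \eqref{1.0.6} is equivalent to
\begin{equation*}
    m_t + u^k\, m_x = 0.
\end{equation*}
By Lemma \ref{lema3.2}, the characteristics $y(t,x)$ with $\p_t y = u(t,y)^k$ are well defined and $y(t,\cdot)$ is an increasing diffeomorphism of $\R$. Along them $\frac{d}{dt}\, m(t, y(t,x)) = 0$, so
\begin{equation*}
    m(t, y(t,x)) = m_0(x), \qquad (t,x) \in [0,T)\times\R.
\end{equation*}

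For item \ref{teo3.1-1}, composing the previous identity with $y^{-1}(t,\cdot)$ and using that $y(t,\cdot)$ is a bijection yields $\sign{(m(t,z))}=\sign{(m_0(y^{-1}(t,z)))}$, from which $\sign{(m)}=\sign{(m_0)}$ follows on $[0,T)\times\R$. To transfer the sign to $u$, I would use the convolution representation $u = g\ast m$ with $g(x) = e^{-|x|}/2 > 0$: since $g>0$, a non-negative (respectively non-positive) $m$ produces a non-negative (respectively non-positive) $u = g\ast m$. Applying this both to $(u,m)$ and to $(u_0, m_0)$ closes the chain $\sign{(u)} = \sign{(u_0)} = \sign{(m)} = \sign{(m_0)}$.

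For item \ref{teo3.1-2}, with $k=1$, I work under the sign hypothesis of item \ref{teo3.1-1} on which the bound ultimately rests; assume $m_0\geq 0$ (the other case being symmetric), so by item \ref{teo3.1-1} also $m(t,\cdot)\geq 0$ and $u(t,\cdot)\geq 0$. The key pointwise bound is $|u_x|\leq u$, which follows from $u_x = g'\ast m$ together with the identity $|g'(x)| = g(x)$ valid under the regularization $\sign{(0)}=0$:
\begin{equation*}
    |u_x(t,x)| \;\leq\; \int_{\R} |g'(x-y)|\, m(t,y)\, dy \;=\; \int_{\R} g(x-y)\, m(t,y)\, dy \;=\; u(t,x).
\end{equation*}
By Lemma \ref{lema3.1}, whose decay requirements are met since the solution belongs to $C^0([0,T),H^3(\R))\cap C^1([0,T),H^2(\R))$, one has $\int_\R m(t,y)\,dy = \int_\R m_0(y)\,dy = \|m_0\|_1$ (with the last equality coming from $m_0\geq 0$). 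Combining, $u(t,x) \leq \|g\|_\infty\, \|m(t,\cdot)\|_1 = \tfrac{1}{2}\|m_0\|_1$, and therefore $-u_x(t,x) \leq |u_x(t,x)| \leq u(t,x) \leq \tfrac{1}{2}\|m_0\|_1 \leq \|m_0\|_1$.

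The main obstacle is precisely the interplay between the two items: the clean estimate of item \ref{teo3.1-2} requires $m$, and hence $u$, to keep a fixed sign, which is exactly what item \ref{teo3.1-1} provides; without this, the same argument yields only $-u_x\leq \tfrac{1}{2}\|m(t,\cdot)\|_1$, and in the sign-indefinite regime $\|m(t,\cdot)\|_1$ is not a priori controlled by $\|m_0\|_1$. A secondary technical point is verifying the integrability and decay conditions of Lemma \ref{lema3.1} for the Sobolev solution, but these are standard consequences of the embedding $H^3(\R)\hookrightarrow C^2_0(\R)$.
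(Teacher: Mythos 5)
Your proof is correct. Item \ref{teo3.1-1} follows the paper's argument essentially verbatim: transport of $m$ along the characteristics of Lemma \ref{lema3.2}, then transfer of sign through $u=g\ast m$ with $g>0$. For item \ref{teo3.1-2}, however, you take a genuinely different route. The paper integrates $m=u-u_{xx}$ over $(-\infty,x]$ to get $\int_{-\infty}^x m(t,s)\,ds=\int_{-\infty}^x u(t,s)\,ds-u_x(t,x)$, then discards the non-negative term $\int_{-\infty}^x u$ and bounds the partial integral of the sign-definite $m$ by $\|m_0\|_1$ via Lemma \ref{lema3.1}. You instead work at the level of the kernel, using $|g'|=g$ a.e.\ to get the pointwise inequality $|u_x|\leq |u|$ and then $\|u(t,\cdot)\|_\infty\leq\tfrac{1}{2}\|m(t,\cdot)\|_1=\tfrac{1}{2}\|m_0\|_1$. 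Both arguments rest on the same two pillars (sign invariance from item \ref{teo3.1-1} and the conservation $\int_\R m(t,\cdot)=\int_\R m_0$ from Lemma \ref{lema3.1}), and your reading that item \ref{teo3.1-2} is proved under the sign-definiteness of $m_0$ matches what the paper actually does. Your version buys a sharper two-sided bound, $|u_x(t,x)|\leq\tfrac{1}{2}\|m_0\|_1$, rather than only $-u_x\leq\|m_0\|_1$; the paper's version is marginally more self-contained in that it never needs $\|g\|_\infty$ or the identity $|g'|=g$. One small caveat: the integrability hypotheses of Lemma \ref{lema3.1} do not follow from the embedding $H^3(\R)\hookrightarrow C^2_0(\R)$ alone (Sobolev functions need not be in $L^1$); in the sign-definite setting they instead come from $m_0\in L^1(\R)$ together with Young's inequality for $u=g\ast m$. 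This gap is shared with the paper's own presentation and does not affect the substance of your argument.
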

\begin{proof} Let $y$ be the diffeomorphism given in Lemma \ref{lema3.2}. Differentiation of $m(t,y(t,x))$ with respect to $t$ yields
$$\frac{d}{dt}m(t,y(t,x)) = m_t + y_tm_x = m_t + u^km_x = 0,$$
which means that $m(t,y(t,x))$ does not depend on $t$ and, therefore, 
\bb\label{3.0.6}
m(t,y(t,x)) = m_0(x).
\ee

Since $y$ is a diffeomorphism, we conclude that $\sign(m_0(\cdot)) = \sign(m(t,y(t,\cdot)))$. Therefore,  $m_0$ does not change sign if and only if $m$ does not change sign too. Now we observe that $u(t,x)=g\ast m(t,x)$, where $g(x)=e^{-|x|}/2$. Since $g(x)>0$, then $\sign{(u(t,x))}=\sign{(m(t,x))}$ and $\sign{(u_0(x))}=\sign{(m_0(x))}$. This proves $\ref{teo3.1-1}$. To prove the second part, let us first assume $m_0\geq0$. From \eqref{3.0.2}, we have
$$
\|m_0\|_1=\int_\R m_0(s)\,ds=\int_{\R}m(t,s)\,ds\geq\int_{-\infty}^x m(t,s)\,ds=\left(\int_{-\infty}^x u(t,s)ds\right)- u_x(x,t). 
$$
Since $u\geq0$ and
$$
0\leq\int_{-\infty}^xu(t,s)\,ds\leq\int_\R u(t,s)\,ds<\infty,
$$
we have $\|m_0\|_1\geq - u_{x}(t,x)$. 

Let us now prove the inequality whenever $m_0\leq0$. In this case, we have $-m\geq 0$ and $-u\geq 0$ as well. Then
$$
0\leq-\int_{\R} u(t,s) ds = -\int_{\R} m(t,s) ds = -\int_{\R}m_0(s)ds < \infty.
$$
Since
$$0\geq \int_{-\infty}^xm(t,s)ds = \int_{-\infty}^x(u- u_{xx})(t,s)ds = \int_{-\infty}^x u(t,s)ds-u_{x}(t,x),$$
we have
$$
- u_x(x,t)\leq -\int_{-\infty}^x u(t,s)\,ds\leq \int_\R -u(t,s)\,ds=\int_\R -m_0(s)\,ds=\|m_0\|_1, 
$$
which proves the result.
\end{proof}

\section{Continuation and compactly supported data}\label{sec4}

Here we prove theorems \ref{teo2.1}--\ref{teo2.3}. 

\subsection{Proof of the Theorem \ref{teo2.1}}\label{subsec3.1}

Theorem \ref{teo2.1} is a an immediate recollection of results proven so far. In fact, if $m_0$ does not change sign, then Theorem \ref{teo3.1} concludes the proof of item 1. Moreover, if we assume that $m_0$ is compactly supported on $[a,b]$, from \eqref{3.0.6} we conclude that $m$ is supported on $[y(t,a),y(t,b)]$. Conversely, fix $t>0$. If $m(t,x)$ is compactly supported on $[a,b]$, then $m_0$ vanishes identically outside the interval $[y^{-1}(t,a),y^{-1}(t,b)]$, which proves part 2 of Theorem \ref{teo2.1}.

%Let us prove part 3 of Theorem \ref{teo2.1}. In fact, differentiating \eqref{3.0.7} with respect to $x$, we obtain $m_x(t,x)y_x(t,x)=m_0'(x)$. By \eqref{3.0.6} we have $\sign{(m_x(t,\cdot))}=\sign{(m_0'(\cdot))}$. Since $u_x=\Lambda^{-2}m_x$, we conclude that if $u_0'$ does not change sign, then $\sign{(u_x)}$ does not change.

Part 3 follows from \eqref{3.0.6} and the relation $u(t,x)=\Lambda^{-2}m(t,x)$. Finally, writing explicitly this convolution, we have
\bb\label{4.1.1}
\ba{lcl}
u(t,x)&=&\ds{\f{e^{-x}}{2}\int_{-\infty}^xe^sm(t,s)ds+\f{e^{x}}{2}\int_x^{\infty}e^{-s}m(t,s)ds,}\\
\\
\text{and}\\
\\
u_x(t,x)&=&\ds{-\f{e^{-x}}{2}\int_{-\infty}^xe^sm(t,s)ds+\f{e^{x}}{2}\int^{\infty}_xe^{-s}m(t,s)ds.}
\ea
\ee

Adding them, we have
\bb\label{4.1.2}
(u+u_x)(t,x)=e^x\int^{\infty}_xe^{-s}m(t,s)ds,
\ee
which proves the remaining part of the theorem. \hfill$\square$

In line with Theorem \ref{teo2.1} we have the following result.

\begin{theorem}\label{teo4.1}
Assume that $u_0\in H^3(\R)$ is such that $m_0$ is either non-negative or non-positive and let $u$ be the corresponding solution of \eqref{2.2.2}. Then the quantities $\|u(t,\cdot)\|_1$ and $\|m(t,\cdot)\|_1$ are constant.
\end{theorem}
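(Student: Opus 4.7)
The plan is to combine the sign-preservation result (Theorem \ref{teo3.1}) with the integral conservation law (Lemma \ref{lema3.1}): once we know $u$ and $m$ keep a constant sign, their $L^1$ norms coincide (up to a global sign) with their integrals, and those integrals are time-invariant by Lemma \ref{lema3.1}.

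First, I would invoke Theorem \ref{teo3.1}, item \ref{teo3.1-1}, to get $\operatorname{sgn}(u(t,x))=\operatorname{sgn}(m(t,x))=\operatorname{sgn}(m_0)=:\varepsilon\in\{-1,+1\}$ for every $(t,x)\in[0,T)\times\R$. In particular $|u(t,x)|=\varepsilon\,u(t,x)$ and $|m(t,x)|=\varepsilon\,m(t,x)$ pointwise, whence
\begin{equation*}
\|u(t,\cdot)\|_1=\varepsilon\int_\R u(t,x)\,dx,\qquad \|m(t,\cdot)\|_1=\varepsilon\int_\R m(t,x)\,dx.
\end{equation*}

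Next I would verify the hypotheses of Lemma \ref{lema3.1} in order to apply its conclusion. The well-posedness quoted from \cite{yan} delivers $u\in C^0([0,T),H^3(\R))\cap C^1([0,T),H^2(\R))$, so $u,u_x,u_{xx}$ and $u_{tx}$ all belong to $H^1(\R)$ and therefore vanish at $\pm\infty$ by the Sobolev embedding $H^1(\R)\hookrightarrow C_0(\R)$. For the integrability at infinity I would use the sign-definite character of $m$: the monotone convergence theorem on $\int_{-R}^{R}\varepsilon m(t,x)\,dx$ shows that this integral is either finite for every $R$ with a finite limit, or diverges to $+\infty$ independently of $t$; since $\int_\R m_0\,dx$ is finite (which we may assume, as otherwise both claimed norms are infinite and the statement is vacuous), the conservation identity forces $m(t,\cdot)\in L^1(\R)$. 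Then $u=g\ast m$ with $g(x)=e^{-|x|}/2\in L^1(\R)$ yields $u(t,\cdot)\in L^1(\R)$ and the same for the derivatives (via differentiation of the convolution). With these hypotheses fulfilled, Lemma \ref{lema3.1} gives
\begin{equation*}
\int_\R u(t,x)\,dx=\int_\R u_0(x)\,dx=\int_\R m_0(x)\,dx=\int_\R m(t,x)\,dx,\quad t\in[0,T).
\end{equation*}

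Finally, multiplying the previous identity by $\varepsilon$ and using the two expressions for the $L^1$ norms I recorded above gives $\|u(t,\cdot)\|_1=\varepsilon\int_\R u_0\,dx=\|u_0\|_1$ and $\|m(t,\cdot)\|_1=\varepsilon\int_\R m_0\,dx=\|m_0\|_1$, both independent of $t$. The only subtlety I anticipate is the integrability verification alluded to above; everything else is a direct assembly of Theorem \ref{teo3.1} and Lemma \ref{lema3.1}, so this should be the cleanest of the results in Section \ref{sec4}.
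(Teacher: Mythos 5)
Your proposal is correct and follows essentially the same route as the paper: Theorem \ref{teo3.1} gives the sign-definiteness of $u$ and $m$, so the $L^1$ norms equal (up to a fixed sign) the integrals conserved by Lemma \ref{lema3.1} via the conserved quantity \eqref{1.0.8}. Your extra care in checking the integrability hypotheses of Lemma \ref{lema3.1} (which the paper's proof passes over silently) is a welcome refinement but does not change the argument.
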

\begin{proof}
Let us prove that $\|u(t,\cdot)\|_1$ and $\|m(t,\cdot)\|_1$ are conserved for any $t\in[0,T)$. Firstly, assume that $m_0\geq0$. Then $u\geq0$ and the conserved quantity \eqref{1.0.8} yields
$$
{\cal H}_0[u]=\int_\R u(t,x)dx=\int_\R|u(t,x)|dx=\|u(t,\cdot)\|_1,
$$
which means that $\|u(t,\cdot)\|_1=\|u_0\|_1$. On the other hand, if $u\leq0$, then
$$
{\cal H}_0[u]=-\int_\R \left(-u(t,x)\right)dx=-\int_\R|u(t,x)|dx=-\|u(t,\cdot)\|_1,
$$
and, again, $\|u(t,\cdot)\|_1=\|u_0\|_1$.

By Lemma \ref{lema3.1} we know that
$$
\int_\R m(t,x)dx=\int_\R u(t,x)dx,
$$
and the remaining part of the demonstration is analogous, the reason why we omit it.
\end{proof}

\subsection{Proof of Theorem \ref{teo2.2}}

We begin by recalling that if $u_0\in H^s(\R)$, with $s>3/2$, then the Cauchy problem of \eqref{1.0.7} with $u$ satisfying $u(0,x)=u_0(x)$ has a unique local solution $u\in C^0([0,T),H^s(\R))\cap C^1([0,T),H^{s-1}(\R))$, see \cite[Corollary 2.1]{yan} and \cite[Theorem 1.1]{himjmp2014}. Instead of proving Theorem \ref{teo2.2} directly, we shall prove the following stronger result regarding the $b-$equation \eqref{1.0.7}.
\begin{theorem}\label{teo3.2}
Let $b\in[0,3]$, $s>3/2$, $u_0\in H^s(\R)$, $u$ be the corresponding solution of \eqref{1.0.7} satisfying $u(0,x)=u_0(x)$, $I\subseteq\R$ be a non-empty open interval and $\Omega=(0,T)\times I$, where $T$ is the lifespan of the solution $u$. If $\left.u\right|_{\Omega}\equiv0$, then $u\equiv0$ on $[0,T)\times\R$. Moreover, the solution can be extended globally.
\end{theorem}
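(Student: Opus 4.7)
The plan is to pass \eqref{1.0.7} into its standard nonlocal form and exploit the structure forced by $u|_\Omega = 0$. Writing \eqref{1.0.7} with $m = u - u_{xx}$ as
$$u_t - u_{txx} + (b+1)uu_x - bu_xu_{xx} - uu_{xxx} = 0,$$
applying $\Lambda^{-2} = g\ast$ (with $g(x) = e^{-|x|}/2$), and integrating by parts, I would reach
$$u_t + uu_x + \p_x g \ast F(u) = 0, \qquad F(u) := \f{b}{2}u^2 + \f{3-b}{2}u_x^2.$$
Crucially, the hypothesis $b \in [0,3]$ is exactly what makes $F(u)$ pointwise nonnegative.

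On $\Omega$ we have $u = u_x = u_t = 0$, so $F(u)$ vanishes on $\Omega$ and the equation collapses to $\p_x g \ast F(u) \equiv 0$ on $\Omega$. Writing $I = (p,q)$ with $-\infty \leq p < q \leq \infty$, a direct computation using $g(z) = e^{-|z|}/2$ together with the vanishing of $F(u)$ on $I$ gives, for every $x \in I$,
$$g \ast F(u)(t,x) = \f{e^{-x}}{2} A(t) + \f{e^x}{2} B(t),$$
where $A(t) := \int_{-\infty}^{p} e^s F(u)(t,s)\,ds$ and $B(t) := \int_{q}^{\infty} e^{-s} F(u)(t,s)\,ds$ (with the obvious convention if $p=-\infty$ or $q=\infty$). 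Differentiating in $x$ and setting the result to zero on $I$ yields $-A(t)e^{-x} + B(t)e^x \equiv 0$ on $I$; the linear independence of $e^x$ and $e^{-x}$ on any open interval forces $A(t) = B(t) = 0$ for each $t \in (0,T)$.

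Since $F(u) \geq 0$ and the weights $e^{\pm s}$ are strictly positive, $A = B = 0$ forces $F(u)(t,\cdot) \equiv 0$ on $\R \setminus I$, and hence on all of $\R$. For $b \in (0,3]$ at least one coefficient in $F$ is strictly positive, so $u \equiv 0$ follows immediately. In the edge case $b = 0$, which is the situation of Theorem \ref{teo2.2}, we only get $u_x \equiv 0$; then $u(t,\cdot)$ is a spatial constant and the only constant lying in $H^s(\R)$ is $0$, so $u \equiv 0$. In every case $u \equiv 0$ on $(0,T) \times \R$, and continuity $u \in C^0([0,T),H^s(\R))$ extends this to $t = 0$. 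The global extension is then immediate: the identically zero function is a global solution, so by uniqueness of the local Cauchy problem with $u_0 \equiv 0$ the lifespan must be infinite.

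The main obstacle lies in the bookkeeping for the nonlocal computation in the second step (keeping careful track of the cases where $p$ or $q$ is infinite) and in recognizing that the range $b \in [0,3]$ is exactly the one making the sign-based argument on $F(u)$ work: outside this range the two coefficients of $F$ have opposite signs, so $A = B = 0$ no longer forces $F$ to vanish, and this route breaks down.
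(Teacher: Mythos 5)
Your proposal is correct and takes essentially the same route as the paper: both rewrite the $b$-equation in the nonlocal form $u_t+uu_x+\p_x\Lambda^{-2}\big(\f{b}{2}u^2+\f{3-b}{2}u_x^2\big)=0$, use $u|_\Omega=0$ to conclude that the nonlocal term vanishes on $I$ for each fixed $t$, and then exploit the non-negativity of $f=\f{b}{2}u^2+\f{3-b}{2}u_x^2$ (this is exactly where $b\in[0,3]$ enters) together with the strict positivity of the kernel $e^{-|x-y|}/2$ to force $f\equiv0$ on all of $\R$, finishing with the same case split between $b\in(0,3]$ and $b=0$. The only variation is in one intermediate step, where you expand the convolution on $I$ as $A(t)e^{-x}+B(t)e^{x}$ and invoke linear independence of the exponentials, while the paper integrates the derivative of the nonlocal term over a subinterval of $I$ and uses the identity $\p_x^2\Lambda^{-2}=\Lambda^{-2}-1$; both devices extract the same conclusion.
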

\begin{proof}
The fact that $u$ can be extended globally is immediate once we prove the result, since we take $u(t,x)=0$, for all $(t,x)$.

Note that \eqref{1.0.7} can be rewritten as
\bb\label{4.2.1}
u_t+uu_x+\p_x\Lambda^{-2}\left(\f{b}{2}u^2+\f{3-b}{2}u_x^2\right)=0.
\ee

Fix $t_0\in(0,T)$ and define $F:\R\rightarrow\R$ by
\bb\label{4.2.2}
F(x):=\p_x\Lambda^{-2}\left(\f{b}{2}u(t_0,x)^2+\f{3-b}{2}u_x(t_0,x)^2\right).
\ee

We observe that
\begin{itemize}
    \item The conditions on $u$ and the definition of $F$ imply that $F\in C^1(\R)$;
    \item If $x\in I$, by \eqref{4.2.1} and \eqref{4.2.2} we have
    $$
    F(x)=\p_x\Lambda^{-2}\left(\f{b}{2}u(t_0,x)^2+\f{3-b}{2}u_x(t_0,x)^2\right)=-\left(u_t+uu_x\right)(t_0,x)\equiv0.
    $$
    \item By the Fundamental Theorem of Calculus, given $x_0,\,x_1\in I$, with $x_0<x_1$, we have
    $$
    0=F(x_1)-F(x_0)=\int_{x_0}^{x_1} F'(x)dx.
    $$
    \item We have the identity $\p_x^2\Lambda^{-2}=\p_x^2\Lambda^{-2}-1$.
    \item We note that if $b\in[0,3]$ and 
    $$f(x):=\f{b}{2}u(t_0,x)^2+\f{3-b}{2}u_x(t_0,x)^2,$$
    then $f$ is non-negative, continuous, and 
    $$
    (\Lambda^{-2}f)(x)=(g\ast f)(x)=\int_{-\infty}^{+\infty}\f{e^{-|x-y|}}{2}f(y)dy\geq 0.
    $$
    Moreover, the last integral vanishes if and only if $f(x)\equiv0$, for all $x\in\R$.

\end{itemize}

From the observations above, we have
$$
0=F(x_1)-F(x_0)=\int_{x_0}^{x_1}F'(x)dx=\int_{x_0}^{x_1}\left(\Lambda^{-2}(f)(x)-\underbrace{f(x)}_{=0,\,\,\text{on}\,I}\right)dx=\int_{x_0}^{x_1}(g\ast f)(x)dx,
$$
which implies that 
\bb\label{4.2.3}
\f{b}{2}u(t_0,x)^2+\f{3-b}{2}u_x(t_0,x)^2=0.
\ee
If $b\in(0,3]$, then \eqref{4.2.3} implies that $u(t_0,x)=0$. If $b=0$, we are forced to conclude that $u(t_0,x)=c$, for some constant $c$. Since $u\rightarrow0$ as $|x|\rightarrow\infty$, we conclude again that $u(t_0,x)=0$.

This proves that for each $t_0\in(0,T)$, then $x\mapsto u(t_0,x)$ vanishes. Therefore, the solution vanishes on $(0,T)\times\R$ and, by continuity, on $[0,T)\times\R$.
\end{proof}

{\bf Proof of Theorem \ref{teo2.2}.} The proof of Theorem \ref{teo2.2} is nothing but an immediate corollary of Theorem \ref{teo3.2} with $b=0$.\hfill$\square$

\subsection{Proof of the Theorem \ref{teo2.3}}

We now present the demonstration of Theorem \eqref{teo2.3} and its corollaries.

{\bf Proof of Theorem \ref{teo2.3}.} To prove Theorem \ref{teo2.3}, we claim that if $u$ vanishes on $(t_0,t_1)\times I$, then it vanishes on $(t_0,t_1)\times\R$. In order to prove it, consider the function \eqref{4.2.2} with $b=0$. Proceeding similarly as in the demonstration of Theorem \ref{teo3.2}, for each $t^\ast\in(t_0,t_1)$ fixed, we conclude that $u_x(t^\ast,x)=0$, for all $x\in\R$, which forces $u(t^\ast,x)=0$, for all $x\in\R$. Since this holds to all $t^\ast\in(t_0,t_1)$, we conclude our claim.

For any $t^{\ast}\in (t_0,t_1)$, we have $u(t^{\ast},x)=0,$ which means that $\mathcal{H}_0[u(t^{\ast},\cdot)] = 0$, where $\mathcal{H}_0[u]$ is given by \eqref{1.0.8}. In view of the conservation of $\mathcal{H}_0[u]$, this implies that $\mathcal{H}_0[u(t,\cdot)]=0$ for every $t$ such that the solution is defined. On the other hand, since the sign of $u$ does not change, we note that by Theorem \ref{teo4.1} the conserved quantity $\eqref{1.0.8}$ vanishes if and only if $u(t,\cdot)\equiv0$, which says that $u\equiv0$ on $[0,T)\times\R$ (and, therefore, it can be extended globally), finishing the proof of Theorem \ref{teo2.3}. \hfill$\square$

{\bf Proof Corollary \ref{cor2.1}.} The conditions on $m_0$ implies that the corresponding solution $u$ of the associated Cauchy problem does not change its sign. The result is an immediate consequence of Theorem \ref{teo2.3}. \hfill$\square$

{\bf Proof Corollary \ref{cor2.2}.} Assume that $0\not\equiv u_0\in H^3(\R)$ is a compactly supported initial data for the problem \eqref{2.2.2}, and $u$ its corresponding solution. If $u$ were compactly supported, then for each $t_0\in(0,T)$ fixed, we would be able to find numbers $a_{t_0}$ and $b_{t_0}$, with $a_{t_0}<b_{t_0}$, such that $u(t_0,x)=0$, $x\in[a_{t_0},b_{t_0}]$. Letting $x\in[a_{t_0},b_{t_0}]$, from the equation in \eqref{2.2.2} we obtain
$$
0=(u_t-u_{txx})(t_0,x)+u(t_0,x)(u_x-u_{xxx})(t_0,x)=(1-\p_x^2)u_t(t_0,x)
$$
and then, $u_t(t_0,x)=0$. Therefore, \eqref{4.2.2} and \eqref{4.2.1} with $b=1$ give $F(x)=0$, $x\in[a_{t_0},b_{t_0}]$. Proceeding similarly as in the demonstration of Theorem \ref{teo3.2} we conclude that $u(t_0,x)=0$, $x\in\R$. Since $m_0$ does not change sign, Theorem \ref{teo4.1} implies that $\|u(t,\cdot)\|_1$ is the same for any value of $t$. Therefore, if $u_0\neq0$, we have $0=\|u(t_0,\cdot)\|_1=\|u_0\|_1>0$, which is a contradiction. \hfill$\square$

\section{Global existence of solutions}\label{sec5}

We begin with the following result.
\begin{lemma}\label{lema5.1}
Given $u_0\in H^3(\R)$, let $u\in C^0([0,T),H^3(\R))$ be the unique solution of \eqref{3.0.1} subject to $u(0,x)=u_0(x)$, for some $T>0$. If there exists a positive constant $\kappa$ such that $u_x>-\kappa$, then $\Vert u\Vert_{H^3}\leq e^{\kappa t/2}\Vert u_0\Vert_{H^3}$.
\end{lemma}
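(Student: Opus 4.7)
The plan is to prove the bound via a standard energy estimate, exploiting the transport form $m_t+um_x=0$ (with $m=u-u_{xx}$) that the equation takes when $k=1$. By Plancherel and the relation $\widehat m(\xi)=(1+\xi^2)\widehat u(\xi)$, one has $\|u\|_{H^3}^2=\|m\|_{H^1}^2=\|m\|_{L^2}^2+\|m_x\|_{L^2}^2$, so it suffices to control these two quantities and close via Gr\"onwall.

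First, multiplying $m_t+um_x=0$ by $2m$ and integrating by parts (boundary terms vanish because $u\in C^0([0,T),H^3(\R))$), I obtain
\[
\frac{d}{dt}\|m\|_{L^2}^2=-\int u(m^2)_x\,dx=\int u_x m^2\,dx.
\]
Next, differentiating the transport equation in $x$ yields $\partial_t m_x+u(m_x)_x+u_x m_x=0$, and a parallel multiplication by $2m_x$ with integration by parts gives
\[
\frac{d}{dt}\|m_x\|_{L^2}^2=-\int u_x m_x^2\,dx.
\]
Adding these identities,
\[
\frac{d}{dt}\|u\|_{H^3}^2=\int u_x\bigl(m^2-m_x^2\bigr)\,dx.
\]

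The decisive step is to invoke the hypothesis $u_x>-\kappa$ (equivalently $-u_x<\kappa$) to bound the right-hand side by $\kappa\|u\|_{H^3}^2=\kappa\int(m^2+m_x^2)\,dx$. The contribution $-\int u_x m_x^2\,dx\leq\kappa\|m_x\|_{L^2}^2$ is immediate. For the remaining term $\int u_x m^2$, I would integrate by parts once more to write $\int u_x m^2\,dx=-2\int u\,m\,m_x\,dx$ and then use Cauchy--Schwarz together with the convolution representation $u=g*m$ (where $g(x)=e^{-|x|}/2$) to absorb this term into the combined energy. A successful closure delivers $\frac{d}{dt}\|u\|_{H^3}^2\leq\kappa\|u\|_{H^3}^2$, and Gr\"onwall's inequality produces the claimed estimate $\|u\|_{H^3}\leq e^{\kappa t/2}\|u_0\|_{H^3}$.

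The main obstacle is the one-sided nature of the hypothesis: $u_x>-\kappa$ cleanly controls the $\|m_x\|_{L^2}^2$ identity, but $\frac{d}{dt}\|m\|_{L^2}^2=\int u_x m^2$ naturally produces only a lower bound from a lower bound on $u_x$. Closing the argument rigorously requires either an algebraic cancellation between the two energy identities after integration by parts or a structural upper bound on $u_x$ extracted from the equation itself---this is the step I expect to demand the most work.
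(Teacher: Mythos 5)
Your energy identities are correct, and the reduction $\|u\|_{H^3}^2=\|m\|_{L^2}^2+\|m_x\|_{L^2}^2$ together with $\frac{d}{dt}\|m\|_{L^2}^2=\int u_xm^2$ and $\frac{d}{dt}\|m_x\|_{L^2}^2=-\int u_xm_x^2$ is a legitimate starting point. But the proof as written does not close, and the specific device you propose for the problematic term fails: writing $\int u_xm^2\,dx=-2\int u\,m\,m_x\,dx$ and applying Cauchy--Schwarz produces a bound of the form $\|u\|_\infty\|m\|_{L^2}\|m_x\|_{L^2}$, and $\|u\|_\infty$ is precisely a quantity you are trying to control -- it is not bounded by $\kappa$, so Gr\"onwall cannot absorb it with the stated rate. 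You correctly diagnose the obstruction (a lower bound on $u_x$ only gives a lower bound on $\int u_xm^2$), but you leave the decisive step as a hope rather than an argument, so as it stands the proof has a genuine gap.

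The irony is that the ``algebraic cancellation between the two energy identities'' you wish for is actually there, and exploiting it is exactly what the paper does. If you expand $m=u-u_{xx}$ and $m_x=u_x-u_{xxx}$ and integrate by parts, you find $\int u_xm^2=\int u_x^3+\int u_xu_{xx}^2$ and $\int u_xm_x^2=\int u_x^3+4\int u_xu_{xx}^2+\int u_xu_{xxx}^2$; the sign-indefinite cubic terms $\int u_x^3$ cancel in the difference, leaving $\frac{d}{dt}\|u\|_{H^3}^2=\int(-u_x)\bigl(3u_{xx}^2+u_{xxx}^2\bigr)dx\le\kappa\|u\|_{H^3}^2$, which yields the claimed bound. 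The paper reaches the same structure from the other direction: it derives three identities by multiplying the equation by $u$, its $x$-derivative by $u_x$, and its second $x$-derivative by $u_{xx}$, and combines them into the weighted energy $I[u]=\frac14\int(u^2+3u_x^2+4u_{xx}^2+2u_{xxx}^2)dx$, for which the $u_x^3$ contributions cancel and $\frac{d}{dt}I[u]=\int(-u_x)\bigl(2u_{xx}^2+\tfrac12u_{xxx}^2\bigr)dx\le3\kappa I[u]$; equivalence of $\sqrt{I[\cdot]}$ with the $H^3$ norm finishes the proof. So your framework is sound and arguably cleaner, but you must carry out the expansion of $\int u_x(m^2-m_x^2)$ in terms of $u,u_x,u_{xx},u_{xxx}$ explicitly -- not estimate $\int u_xm^2$ in isolation -- for the one-sided hypothesis $u_x>-\kappa$ to suffice.
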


\begin{proof}
We begin with the observation that if $u\equiv0$, then the result is trivial. Therefore, we assume that $u_0\not\equiv0$.

Let $u_0$ and $u$ be given as enunciated. Then we have the following identities:
\begin{itemize}
\item After multiplying \eqref{3.0.1} by $u$ and some manipulation, we obtain
\begin{align}\label{5.0.1}
\partial_t\left(\frac{u^2+u_x^2}{2}\right)+\partial_x\left(\frac{u^3}{3} - uu_{tx} - u^2u_{xx}\right) + u\p_x (u_x^2)=0.
\end{align}

\item Now calculating the $x$ derivative of \eqref{3.0.1} and multiplying the result by $u_x$ we have
\bb\label{5.0.2}
\ba{l}
    \ds{\partial_t\left(\frac{u_x^2+u_{xx}^2}{2}\right) - \partial_x(u_xu_{txx} + uu_xu_{xxx})}\\
    \\
    \ds{+u\frac{1}{2}\partial_x(u_x^2+ u_{xx}^2)+ u_x^3=0.}
\ea
\ee

\item Finally, differentiating \eqref{3.0.1} with respect to $x$ twice and then multiplying the result by $u_{xx}$, we have
\bb\label{5.0.3}
\ba{l}
    \ds{\partial_t\left(\frac{u_{xx}^2+u_{xxx}^2}{2}\right) -\p_x\left(u_{xx}u_{txxx}+uu_{xx}u_{xxxx}+u_xu_{xx}u_{xxx}\right)}\\
    \\
    \ds{+u\p_x\left(\f{u_{xx}^2+u_{xxx}^2}{2}\right)+3u_xu_{xx}^2+u_xu_{xxx}^2=0.}
\ea
\ee
\end{itemize}

Note that
$$
\f{1}{2}\frac{u^2+u_x^2}{2}+\frac{u_x^2+u_{xx}^2}{2}+\frac{u_{xx}^2+u_{xxx}^2}{2}=\f{u^2+3u_x^2+4u_{xx}^2+2u_{xxx}^2}{4},
$$
which is a linear combination of the terms differentiated with respect to $t$ in \eqref{5.0.1}--\eqref{5.0.3}. Let us then define

\bb\label{5.0.4}
I[u]:=\f{1}{4}\int_\R (u^2+3u_x^2+4u_{xx}^2+2u_{xxx}^2)dx.
\ee

From \eqref{5.0.1}--\eqref{5.0.4}, we have
$$
\f{d}{dt}I[u]=-\int_\R \left(u_x^3+3u_xu_{xx}^2+u_xu_{xxx}^2\right)dx-\f{1}{2}\int_\R u\p_x\left(2u_x^2+2u_{xx}^2+u_{xxx}^2\right)dx. 
$$

Integrating by parts the last integral and assuming that $u_x>-\kappa$, we have
$$
\f{d}{dt}I[u]=\int_\R(-u_x)\left(2u_{xx}^2+\f{u_{xxx}^2}{2}\right)dx\leq 2\kappa\int_\R u_{xx}^2dx+\f{\kappa}{2}\int_\R u_{xxx}^2dx\leq 3\kappa I[u],
$$
wherefore, after using the Grönwall's inequality, we conclude that
$$
I[u]\leq I_0 e^{3\kappa t},
$$
for some constant $I_0>0$. The proof is concluded by noticing that $u\mapsto\sqrt{I[u]}$ is a norm equivalent to the $H^3(\R)-$norm, since $\|u\|_{H^3(\R)}/2\leq\sqrt{I[u]}\leq \|u\|_{H^3(\R)}$.
\end{proof}

{\bf Proof of Theorem \ref{teo2.4}.} If $m_0\in L^1(\R)$ does not change sign, then $\sign{(u)}=\sign{(m)}=\sign{(m_0)}=\sign{(u_0)}$ in view of Theorem \ref{teo2.1}. By Theorem \ref{teo3.1} $u_x$ is bounded from below. Theorem \ref{teo2.4} is then a consequence of Lemma \ref{lema5.1}.\hfill$\square$

{\bf Proof of Corollary \ref{cor2.3}.}
Integrating \eqref{5.0.1} with respect to $x$ we have
$$
\ba{lcl}
\ds{\f{1}{2}\f{d}{dt}\|u(t,\cdot)\|_{H^1(\R)}^2}&=&\ds{-\int_\R u(t,x)\p_x(u_x(t,x)^2)dx=\int_\R u_x(t,x)^3dx\leq\int_\R |u_x(t,x)|u_x(t,x)^2dx}\\
\\
&\leq&\ds{\|u_x(t,\cdot)\|_{\infty}\int_\R\left(u(t,x)^2+u_x(t,x)^2\right)dx=\|u_x(t,\cdot)\|_{\infty}\|u(t,\cdot)\|_{H^1(\R)}^2.}
\ea
$$

Therefore, after applying the Gronwall's inequality, we are forced to conclude that
\bb\label{5.0.5}
\|u(t,\cdot)\|_{H^1(\R)}\leq \|u_0\|_{H^1(\R)}e^{\int \|u_x(\tau,\cdot)\|_\infty d\tau}.
\ee

If $\lim\limits_{t\nnearrow T}\|u(t,\cdot)\|_{H^1(\R)}=+\infty,$
then necessarily we have 
$\lim\limits_{t\nnearrow T}\|u(t,\cdot)\|_{\infty}=+\infty$, whereas if $\lim\limits_{t\nnearrow T}\|u(t,\cdot)\|_{\infty}<+\infty$, 
then by \eqref{5.0.5} we are forced to conclude that $u_x(t,x)$ is bounded for each $t$ arbitrarily close to $T$.
\hfill$\square$

\section{Dynamics of solutions}\label{sec6}

In this section we investigate some solutions of equation \eqref{1.0.6} of the form
\bb\label{6.0.1}
u(t,x)=\sum_{j=1}^N u_j(t,x),
\ee
where each function $u_j(t,x)$ in \eqref{6.0.1} is at least continuous. Namely, we consider the following types of solutions:
\begin{itemize}
\item with pointed crest, in which their lateral derivatives are finite, but not equal. A typical solution is obtained by taking
\bb\label{6.0.2}
u_j(t,x)=p_j(t)e^{-|x-q_j(t)|},
\ee
where the functions $p_j,\,q_j,\,1\leq j\leq N$, are functions having first order derivatives, but they are not necessarily continuously differentiable.

This sort of solution is best known as multi-peakons or {\it N-peakons}, see \cite{anco,priigorjde,deg,holm-siam,holm-pla}. Of particular interest is the 1-peakon $u(t,x)=Ae^{-|x-ct|}$, for certain constants $A$ and $c$, which has a jump in the derivatives of $u$ along the curve $t\mapsto(t,ct)$;
\item with no jump in the derivatives, but anti-symmetric along certain curves. These solutions are called {\it cliffs}, see \cite{holm-siam,holm-pla}, and will be of the form 
\bb\label{6.0.3}
u_j(t,x)=c_j(t)+b_j(t)\,\sign{(x-p_j(t))}(1-e^{-|x-p_j(t)|}),
\ee
for certain functions $b_j(t)$, $c_j(t)$ and $p_j(t)$.
\end{itemize}

We note that all the ansatzes \eqref{6.0.2} and \eqref{6.0.3}, when substituted into \eqref{6.0.1}, will lead to a dynamical system to the corresponding unknown functions.

Let $y=y(t)\in\R^n$, $t\in\R$ and $F:\R^n\rightarrow\R^n$ be a function. We recall that a point $y_\ast\in\R^n$ is a critical point of the dynamical system $y'(t)=F(y(t))$ if $F(y_\ast)\equiv0$. Moreover, if $F$ is continuous and locally Lipschitz in a certain domain $\Omega$, then the Picard-Lindel\"of theorem (see \cite[page 10]{valls}) assures the existence of a unique solution for the problem $y'(t)=F(y(t)),\,\,\,y(t_0)=y_0$.

Finally, we observe that some of the functions $u=u(t,x)$ we want to consider here are continuous, and only continuous. Then, these solutions are to be understood as distributional ones. Henceforth, we shall use some facts about distributions. The Dirac delta distribution centered at a point $x_0$ is denoted by $\delta(x-x_0)$, while $\sign{(x)}$ means the sign distribution. It is related to the Dirac delta distribution by the relation $\sign'(\cdot)=2\delta(\cdot)$. For further details see \cite[Chapter 2]{vlad}, and \cite[Chapter 11]{bruno}. We also guide the reader to \cite{anco,himonas-procams} for further details since these references study similar solutions following the same approach as ours. In particular, in \cite{himonas-procams} several similar calculations as those in our paper are done with enough detail.

\subsection{$N-$peakons for $k\in\mathbb{N}$}

Let us assume that 
\bb\label{6.1.1}
u(t,x) = \sum\limits_{i=1}^Np_i(t)e^{-|x-q_i(t)|},
\ee
for certain functions $p_i=p_i(t)$ and $q_i=q_i(t)$, $i=1,\dots,N$, is a solution of \eqref{1.0.6}. We note that these solutions look like $N$ pulses, with amplitudes $p_i(t)$ and positions $q_i(t)$. They are called generically peakons, although sometimes peakon is refereed to pulses with positive amplitudes whereas those with negative amplitude are named antipeakons, see Figure \ref{fig1}. 
\begin{figure}[ht!]
\centering
\includegraphics[width=.5\linewidth]{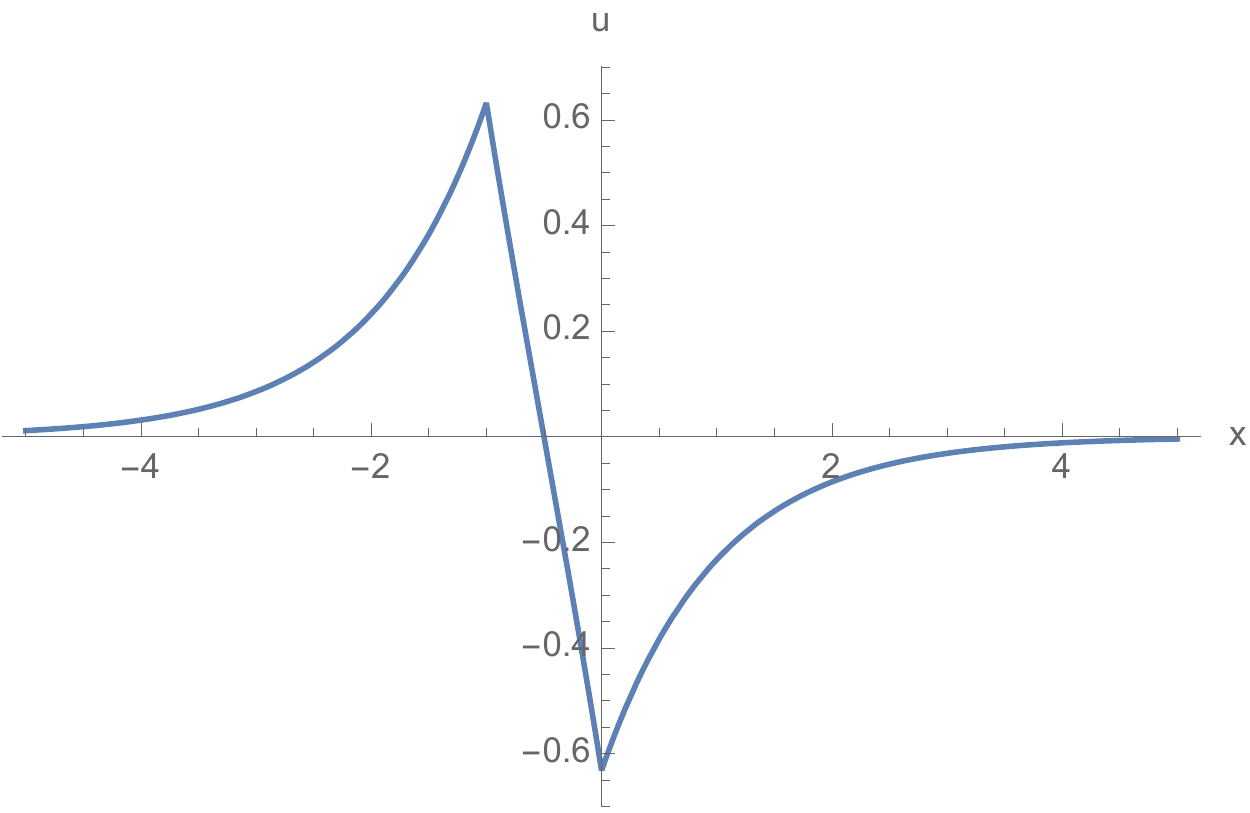}
  \caption{Function $u(t,x)=e^{-|x-t+1|}-e^{-|x+t|}$. We observe two pulses with opposite signs, one of then having positive amplitude (the one above the $x$-axis), corresponding to a peakon, and the other (below the $x$-axis), with negative amplitude, corresponding to an antipeakon.}
\label{fig1}
\end{figure}

Taking the distributional derivatives of $u$, we have
\bb\label{6.1.2}
m=2\sum\limits_{i=1}^Np_i\delta(x-q_i),
\ee
where the dependence on the variable $t$ was omitted for convenience, and
\bb\label{6.1.3}
m_x = 2\sum\limits_{i=1}^Np_i\delta'(x-q_i),\quad\quad m_t = 2\sum\limits_{i=1}^Np_i'\delta(x-q_i) - 2\sum\limits_{i=1}^Np_iq_i'\delta'(x-q_i).
\ee

Substituting \eqref{6.1.2} and \eqref{6.1.3} into \eqref{1.0.6} and after straightforward calculations we obtain
\bb\label{6.1.4}
\ba{lcl}
\ds{2\sum_{i=1}^N\left[p_i'-kp_i u^{k-1}(t,q_i)u_x(t,q_i)\right]\delta(x-q_i)}\\
\\\ds{-2\sum_{i=1}^N p_i\left [q_i'-u^k(t,q_i)\right]\delta'(x-q_i)=0.}
\ea
\ee

We can argue that the \eqref{6.1.4} must vanish identically, which would then imply that the coefficients of $\delta$ and $\delta'$ are 0. However, if we follow the steps in \cite[Subsection 6.2 ]{priigorjde} (the key idea is to use Lemma 1 of \cite[page 9]{fomin}), or the same steps as in \cite{anco,himonas-procams}, we can rigorously prove that
\bb\label{6.1.5}
\left\{
\ba{lcl}
p_i'&=&kp_i u(t,q_i)^{k-1}u_x(t,q_i),\\
\\
q_i'&=& u(t,q_i)^k,
\ea
\right.
\ee
where
$$%\bb\label{6.1.6}
\left\{
\ba{lcl}
u(t,q_i)&=&\ds{\sum_{j=1}^Np_j(t)e^{-|q_i(t)-q_j(t)|}},\\
\\
u_x(t,q_i)&=&\ds{-\sum_{j=1}^N\sign{(q_i(t)-q_j(t))}p_j(t)e^{-|q_i(t)-q_j(t)|}}.
\ea
\right.
$$%\ee

A single peakon solution can be easily found. In fact, for the case of 1-peakon we have $u(t,x)=p(t)e^{-|x-q(t)|}$. Then \eqref{6.1.5} reads $p'=0$ and $q'=p^k$. Then, defining $p=c^{1/k}$, with $c>0$, and $q=ct+q_0$, we obtain
$$%\bb\label{6.1.7}
u(t,x)=c^{1/k} e^{-|x-ct-q_0|},
$$%\ee
where $q_0$ is an arbitrary constant.

Other peakon solutions to \eqref{1.0.6} or, more specifically, multi-peakon solutions, can be found by solving \eqref{6.1.5}, see \cite{anco}, although a solution of the general case is far from a simple task. 

In what follows we also pay some attention to the particular case
\bb\label{6.1.8}
u(t,x)=p_1(t)e^{-|x-q_1(t)|}+p_2(t)e^{-|x-q_2(t)|}.
\ee

\subsubsection{2-peakon dynamics}
If we consider $N=2$ in \eqref{6.1.1}, then \eqref{6.1.8} gives (the dependence on $t$ will be omitted for convenience)
\bb\label{6.1.9}
\left\{\ba{lcl}
p_1'&=&-k\,\sign{(q_1-q_2)}\,p_1p_2(p_1+p_2e^{-|q_1-q_2|})^{k-1}e^{-|q_1-q_2|},\\
\\
p_2'&=&k\,\sign{(q_1-q_2)}\,p_1p_2(p_1e^{-|q_1-q_2|}+p_2)^{k-1}e^{-|q_1-q_2|},\\
\\
q_1'&=&(p_1+p_2e^{-|q_1-q_2|})^{k},\\
\\
q_2'&=&(p_1e^{-|q_1-q_2|}+p_2)^{k}.
\ea\right.
\ee
If we take $p_1=-p_2=p$, $q_1=-q_2=q$ and assume that $k$ is even, then the resulting set of equations implies that $p=0$ or $q=0$. In any case we would then obtain the trivial solution  $u(t,x)=0$. On the other hand, if we assume that $k$ is odd, the four-dimensional dynamical system \eqref{6.1.9} becomes
\bb\label{6.1.10}
\left\{\ba{lcl}
p'&=&k\,\sign{(2q)}\,p^{k+1}(1-e^{-2|q|})^{k-1}e^{-2|q|},\\
\\
q'&=&p^k(1-e^{-2|q|})^{k}.
\ea\right.
\ee
The critical points of the system \eqref{6.1.10} are $p=0$ or $q=0$, which again imply the trivial solution. Let $\mathcal{R}:=(0,\infty)\times(0,\infty)$, $p(0)=p_0$, $q(0)=q_0$ and $(p_0,q_0)\in{\mathcal{R}}$. For $(p,q)\in\mathcal{R}$, system \eqref{6.1.10} reduces to
\bb\label{6.1.11}
\left\{\ba{lcl}
p'&=&k\,p^{k+1}(1-e^{-2q})^{k-1}e^{-2q},\\
\\
q'&=&p^k(1-e^{-2q})^{k}.
\ea\right.
\ee
Since the function $(p,q)\mapsto (k\,p^{k+1}(1-e^{-2q})^{k-1}e^{-2q},p^k(1-e^{-2q})^{k})$ is $C^\infty$ on $\mathcal{R}$, we have granted the existence of an interval $I\subseteq\mathbb{R}$ such that $0\in I$, and a local solution $(p,q)$ to \eqref{6.1.11} subject to $\left.(p,q)\right|_{t=0}=(p_0,q_0)$ defined on $I$.
On $\mathcal{R}$ we note that $p'>0$ and $q'>0$, meaning that they are increasing smooth functions and, in particular, $p(t)>p_0$ and $q(t)>q_0$ for each $t\in I\cap(0,\infty)$.

We can easily use \eqref{6.1.11} to express $p$ as a function of $q$, that is,
\bb\label{6.1.12}
p(t)=p_0\left(\f{1-e^{-2q(t)}}{1-e^{-2q_0}}\right)^{k/2}.
\ee
If we substitute \eqref{6.1.12} into the second equation in \eqref{6.1.11} we can then integrate it and obtain $q$. However, the resulting expression is an implicit function and we do not write it here. Note, however, that the 2-peakon solution to this case is given by
$$
u(t,x)=p_0\left(\f{1-e^{-2q(t)}}{1-e^{-2q_0}}\right)^{k/2}\left(e^{-|x-q(t)|}-e^{-|x+q(t)|}\right).
$$
The solution above corresponds to a peakon/antipeakon solution.
\begin{remark}
We note that the choice $p_1=-p_2=p$ implies that the solution \eqref{6.1.8} satisfies the condition $\mathcal{H}_0[u]=0$, where $\mathcal{H}_0[u]$ is given by \eqref{1.0.8}. However, this is a conserved quantity for the equation only when $k=1$.
\end{remark}

\subsection{$N-$peakons for $k=-n,\,n\in\mathbb{N}$.} The general equations for the $N-$ peakon solutions are obtained directly by taking $k=-n$ in \eqref{6.1.5} and, therefore, we do not repeat the process again. 

In this case we can interpret equation \eqref{1.0.6} as 
\bb\label{6.2.1}
u^nm_t=m_x.
\ee

Regarding the $2-$peakon dynamics, likewise the previous subsection, if $n$ is even we would only have $u(t,x)\equiv0$ (note that this solution is admitted by \eqref{6.2.1}). However, for $n$ odd, proceeding similarly as before, we obtain
$$
p(t)=p_0\left(\f{1-e^{-2q_0}}{1-e^{-2q(t)}}\right)^{n/2},
$$
where $(1-2q)^n p^nq'=1$, $(p_0,q_0)\in\mathcal{R}$, and
$$
u(t,x)=p_0\left(\f{1-e^{-2q_0}}{1-e^{-2q(t)}}\right)^{n/2}\left(e^{-|x-q(t)|}-e^{-|x+q(t)|}\right).
$$

\subsubsection{2-peakon dynamics for the case $k=-1$}
We can explore the 2-peakons dynamics for $k=-1$ by using the conserved quantity \eqref{1.0.9}. 

Let us assume again that $u(t,x)=p_1(t)e^{-|x-q_1(t)|}+p_2(t)e^{-|x-q_2(t)|}$. If we impose that such solution has \eqref{1.0.9} as a conserved quantity, then we have
\bb\label{6.2.2}
0\leq\mathcal{ H}[u]=p_1^2+2p_1p_2e^{-|q_1-q_2|}+p_2^2.
\ee
Let $p_{10}=p_{1}(0)$, $p_{20}=p_{2}(0)$ and $q_0:=|q_1(0)-q_2(0)|$, which is nothing but the initial separation of the pulses. The conservation of \eqref{1.0.9} yields
\bb\label{6.2.3}
\mathcal{ H}[u(0,x)]=p_{10}^2+2p_{10}p_{20}e^{-q_0}+p_{20}^2=:\mathcal{ H}_0.
\ee
Then, equations \eqref{6.2.3} and \eqref{6.2.2} read
$$%\bb\label{5.2.1.3}
p_1^2+2p_1p_2e^{-|q_1-q_2|}+p_2^2=\mathcal{ H}_0.
$$%\ee
If we assume that $(p_{10},p_{20})\neq(0,0)$, then $\mathcal{H}_0>0$. We observe that from \eqref{6.2.2} we have the estimates
\bb\label{6.2.4}
0\leq e^{-|q_1-q_2|}=\frac{\mathcal{ H}_0-p_1^2-p_2^2}{2p_1p_2}\leq 1.
\ee

Let us define
$$%\bb\label{A}
\ba{lcl}
\ds{p_1+p_2e^{-|q_1-q_2|}=\f{\mathcal{H}_0+p_1^2-p_2^2}{2p_1}=:A_1},\\
\\
\ds{p_1e^{-|q_1-q_2|}+p_2=\f{\mathcal{H}_0-p_1^2+p_2^2}{2p_2}=:A_2}.
\ea
$$%\ee
Using $A_1$ and $A_2$ given above, system \eqref{6.1.9} with $k=-1$ reads
\bb\label{6.2.5}
\ba{lcl}
p_1'&=&\ds{\f{1}{2}\,\sign{(q_1-q_2)}\f{\mathcal{H}_0-p_1^2-p_2^2}{2 A_1^2}},\quad q_1'=\ds{\f{1}{A_1}},\\
\\
p_2'&=&\ds{-\f{1}{2}\,\sign{(q_1-q_2)}\f{\mathcal{H}_0-p_1^2-p_2^2}{2 A_2^2}},\quad 
q_2'=\ds{\f{1}{A_2}}.
\ea
\ee

System \eqref{6.2.5} does not have critical points. The last two equations cannot vanish, which implies that if we have solutions of the form \eqref{6.1.8} then they either have two pulses or degenerate into a 1-peakon solution. On the other hand, we may have $p_1'=p_2'=0$. This corresponds to one of the following situations:
\begin{itemize}
\item $q_1=q_2$. We have the superposition of the two peakons into a single one, meaning that the solution degenerates into a (one-)peakon
$$
u(t,x)=\f{1}{c}e^{-|x-ct-q|},
$$
or an antipeakon
$$
u(t,x)=-\f{1}{c}e^{-|x+ct-q|},
$$
where, in any case, $c>0$, and $q$ is a constant.
\item $q_1-q_2\rightarrow\infty$. This condition implies that $e^{-|q_1-q_2|}\approx0$ and thus $\mathcal{H}_0\approx p_1^2+p_2^2$, meaning that $p_1$ and $p_2$ are two constants and the pulses are infinitely separated. Defining $p_1=1/c_1$ and $p_2=1/c_2$, where $c_1$ and $c_2$ are two non-zero different constants. Then, for $t\gg 1$, we have
$$
\mathcal{H}=\f{1}{c^2_1}+\f{1}{c^2_2},\quad q_1(t)=c_1t+q_0,\quad q_2=c_2t,
$$
where $q_0$ is a constant of integration (and the corresponding constant to $q_2$ is conveniently taken as $0$). Note that we can consider $q_0$ as the separation of the pulses at $t=0$. Therefore, the asymptotic solution is given by
$$%\bb\label{6.2.6}
u(t,x)=\f{1}{c_1}e^{-|x-c_1t-q_0|}+\f{1}{c_2}e^{-|x-c_2t|}.
$$%\ee
%\begin{figure}[ht!]
%\centering
%\includegraphics[width=.65\linewidth]{}
%  \caption{Interaction peakon-peakon of the solution \eqref{6.2.6} with $c_1=1$, $c_2=2$ and $q_0=1$. They collide at $t=1$, while $t=0$ and $t=2$ show the pulses before and after the collision, respectively.}
%\label{fig1}
%\end{figure}

If both $c_1$ and $c_2$ are positive or negative, we have, respectively, 2-peakons or 2-anti-peakons, while if they have different signs we have a peakon and an anti-peakon, travelling in opposite directions. %They collide only when $\sign{(c_2-c_1)}=\sign{(q_0)}$ and at the time
%\bb\label{time}
%t=\f{q_0}{c_2-c_1}.
%\ee

%In case $\sign{(c_2-c_1)}\neq\sign{(q_0)}$, then we do not have interactions among the pulses. If $c_1$ and $c_2$ have different signs and $\sign{(c_2-c_1)}=q_0$, we have a solution that necessarily collide at \eqref{time}.

%\begin{figure}[ht!]
%\centering
%\includegraphics[width=.65\linewidth]{}
%  \caption{Interaction peakon-antipeakon of the solution \eqref{6.2.6} with $c_1=1$, $c_2=-1.5$ and $q_0=-2$. They collide at $t=0.8$ and the value $t=0$ shows the solution prior the collision, whereas $t=1$ and $t=2$ show the solution after the interaction.}
%\label{fig1}
%\end{figure}
\end{itemize}

%Let us explore \eqref{6.2.4} once more. Let $u_0(x):=u(0,x)$ and ${\cal H}_0:={\cal H}[u_0]$, where ${\cal H}[u]$ is given by \eqref{6.2.2}. We have two extreme situations:  $e^{-|q_1-q_2|}\approx 1$ and $e^{-|q_1-q_2|}\approx0$, or, equivalently, $q_1\approx q_2$ and $|q_1-q_2|\rightarrow\infty$, respectively. 

%\begin{itemize}
%\item In the first case, that is $e^{-|q_1-q_2|}\approx1$, and we come back to the discussion $q_1=q_2$.
%    \item In the second case the solutions asymptotically degenerate into a 1-peakon and their amplitudes asymptotically become constant and the pulses are infinitely separated. In fact, equation \eqref{6.2.4} then let us infer that $\mathcal{ H}[u]=p_1^2+p_2^2$, meaning that $p_1$ and $p_2$ in \eqref{6.2.2} describe a circle of radius $\sqrt\mathcal{ H}_0=\|u_0\|_{H^1(\mathbb{R})}/\sqrt{2}$, where $u_0=u(0,x)$ is the solution at $t=0$.
%    \end{itemize}

\subsection{Kink-type solutions for $k\in\mathbb{N}$}
Let us assume that 
\bb\label{6.3.1}
u_j(t,x)=c_j(t)+b_j(t)\sign{(x-p_j(t))}(1-e^{-|x-p_j(t)|})
\ee 
in \eqref{6.0.1}, for some functions $c_j$, $b_j$ and $p_j$. We again omit the dependence with respect to the independent variables for convenience.

If we denote $m_j=u_j-\p_x^2u_j$, we conclude that
$$\ba{lcl}
m_j&=&\ds{c_j+b_j\sign{(x-p_j)},\quad \p_tm_j=c_j'+b_j'\sign{(x-p_j)}-2b_jp_j'\delta(x-p_j)},\\
\\
\p_x m_j&=&2b_j\delta(x-\p_j).
\ea$$
Substituting the expressions above into \eqref{1.0.6}, we obtain
\bb\label{6.3.2}
\sum_{j=1}^N\left[c_j'+b_j'\sign{(x-p_j)}+2b_j\left(u(t,p_i)^k-p_i '\right)\delta(x-p_i)\right]=0.
\ee
Proceeding similarly as in the previous subsection, \eqref{6.3.2} results, for $j=1,\cdots,N$,
\bb\label{6.3.3}
\left\{\ba{lcl}
c_j'&=&0,\quad 
b_j'=0,\\
\\
p_j'&=&\ds{\left[\sum_{j=1}^N(c_j+b_j\sign(p_i-p_j)(1-e^{-|p_i-p_j|}))\right]^k.}
\ea\right.
\ee
In view of \eqref{6.3.1} we make the technical hypothesis that if for some $k\in\{1,\cdots,N\}$, $b_k(t)\equiv0$, then $p_k(t)\equiv0$.

System \eqref{6.3.3} directly implies that $c_j=const$ and $b_j=const$, $1\leq j\leq N$. If we assume that $N=2$, $p_1=p_2=p$, $0<c_1=c_2=c^{1/k}/2$, $b_1=b_2=b\neq0$, we conclude that $p(t)=ct+p_0$ and we have the solution
\bb\label{6.3.4}
u(t,x)=c^{1/k}+b\,\sign{(x-ct-p_0)}(1-e^{-|x-ct-p_0|}).
\ee
\begin{figure}[!htb]
    \centering
    \begin{minipage}{.49\textwidth}
        \centering
        \includegraphics[width=\linewidth, height=0.20\textheight]{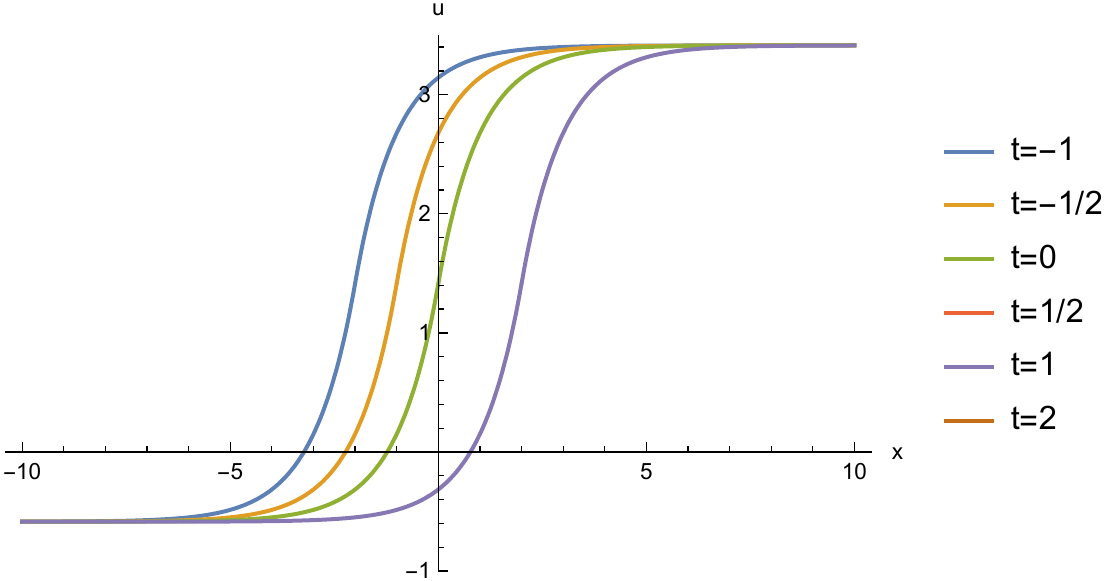}
        \caption{Behavior of the solution \eqref{6.3.4} with $c=2$, $b=2$ and $q_0=-1$.}
        \label{fig4}
    \end{minipage}\,\,
    \begin{minipage}{0.49\textwidth}
        \centering
        \includegraphics[width=\linewidth, height=0.2\textheight]{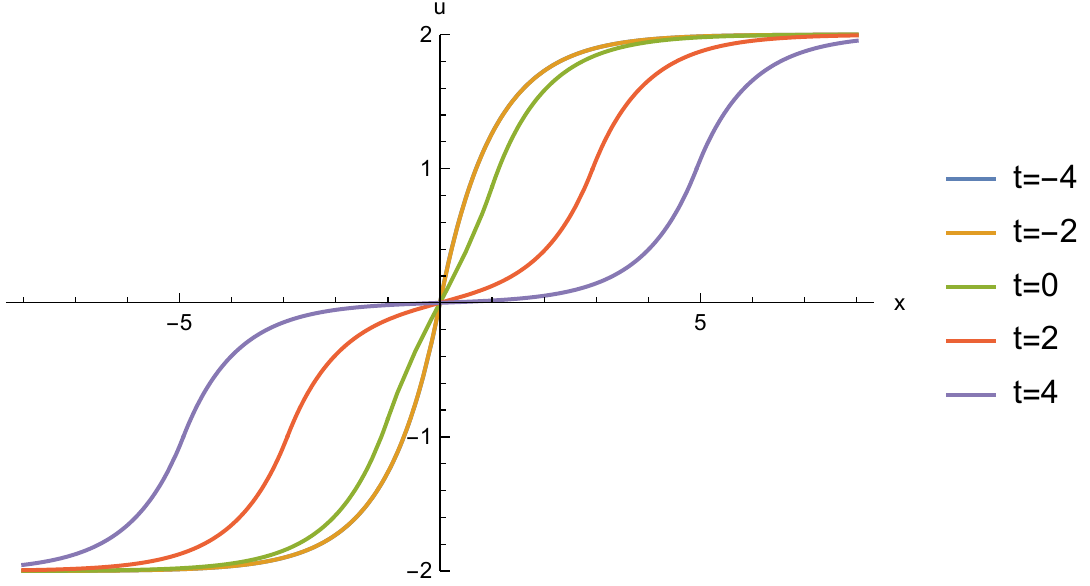}
        \caption{Behavior of the solution \eqref{6.3.7} with $p_0=1$.}
        \label{fig5}
    \end{minipage}
\end{figure}

Yet taking $N=2$, but $p_1=-p_2=p$, from \eqref{6.3.3} we obtain two equations:
$$
\ba{lcl}
p'&=&\left[c_1+c_2+b_1\,\sign(2p)(1-e^{-2|p|})\right]^k,\\
\\
p'&=&-\left[c_1+c_2-b_2\,\sign(2p)(1-e^{-2|p|})\right]^k.
\ea
$$
If we take $c_1+c_2=0$ and $b_2^k=(-1)^{k+1}b_1^k$, $p_0>0$, up to scaling in $t$, we have the following PVI in the region $\mathcal{R}:=(0,\infty)\times(0,\infty)$:
\bb\label{6.3.5}
\left\{
\ba{lcl}
p'&=&(1-e^{-2p})^k,\\
\\
p(0)&=&p_0.
\ea
\right.
\ee

We observe that $p'>0$ in \eqref{6.3.5}, which means that it a local increasing diffeomorphism. This implies that the solution of \eqref{6.3.5} will make \eqref{6.3.1} a monotonic and bounded function, which is nothing but a kink solution. Figures \ref{fig4} and \ref{fig5} show the typical behaviour of a kink solution.

It is worth mentioning that \eqref{6.3.5} has a unique local solution in $\mathcal{R}$ and, in particular, the solution of \eqref{6.3.5} can be implicitly given in terms of the hypergeometric function, since
\bb\label{6.3.6}
\int\f{dp}{(1-e^{-2p})^k}=(1-e^{-2p})^{-k}(1-e^{2x})^k {}_3F_2(k,k;k+1,e^{2p})+const.
\ee
For the case $k=1$ we can find the solution explicitly, namely,
$$
\int\f{dp}{1-e^{-2p}}=\f{1}{2}\ln(e^{2p}-1)+const.
$$
From this and \eqref{6.3.5} we conclude that
$$
p(t)=\f{1}{2}\ln\left[1+(e^{2p_0}-1)e^{2t}\right].
$$

For convenience, let us assume that $b_1=b_2=1$. Then our solution is
\bb\label{6.3.7}
\ba{lcl}
u(t,x)&=&\ds{\sign\left(x-\f{1}{2}\ln\left[1+(e^{2p_0}-1)e^{2t}\right]\right)(1-e^{-\left|x-\f{1}{2}\ln\left[1+(e^{2p_0}-1)e^{2t}\right]\right|})}\\
\\
&&+\ds{\sign\left(x+\f{1}{2}\ln\left[1+(e^{2p_0}-1)e^{2t}\right]\right)(1-e^{-\left|x+\f{1}{2}\ln\left[1+(e^{2p_0}-1)e^{2t}\right]\right|}).}
\ea
\ee

\section{Proof of theorems \ref{teo2.5} and \ref{teo2.6}}\label{sec7}

We recall that we can rewrite \eqref{1.0.6} as
\bb\label{7.0.1}
u_t+u^ku_x=F_t(x),
\ee
where
\bb\label{7.0.2}
F_t(x):=\f{k(k-1)}{2}\Lambda^{-2}(u^{k-2}u_x^3)(t,x)-\f{3k}{2}\p_x\Lambda^{-2}(u^{k-1}u_x^2)(t,x).
\ee

Another equation widely used throughout this section is the differential consequence of \eqref{7.0.1}
\bb\label{7.0.3}
u_{tx}+ku^{k-1}u_x^2+u^ku_{xx}=\p_x F_t(x).
\ee

We have some preliminary results regarding \eqref{7.0.2}. Let $T_0\in(0,T)$. Henceforth we denote the compact set $[0,T_0]\subseteq[0,T)$ by $\I$. 

\begin{proposition}\label{prop6.1}
If $u\in C^0(\I,H^s(\R))$, $s>3/2$, is a solution of \eqref{1.0.6}, then $u(t,\cdot)^nu_x(t,\cdot)^m\in L^1(\R)\cap L^\infty(\R)$, for all $m,\,n\in\mathbb{N}_0$. Moreover, $F_t(\cdot)\in L^1(\R)\cap L^\infty(\R)\cap C^1(\R)$.
\end{proposition}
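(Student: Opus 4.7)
The plan is to rely almost entirely on Sobolev embeddings, Young's convolution inequality, and the distributional identity $(1-\partial_x^2)g=\delta$ for the kernel $g(x)=e^{-|x|}/2$ of $\Lambda^{-2}$.

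First, since $s>3/2$ we have $s-1>1/2$, so both Sobolev embeddings $H^s(\R)\hookrightarrow L^\infty(\R)\cap C^0(\R)$ and $H^{s-1}(\R)\hookrightarrow L^\infty(\R)\cap C^0(\R)$ are available. Applying them to $u(t,\cdot)$ and $u_x(t,\cdot)$, together with compactness of $\I$ and continuity of $t\mapsto u(t,\cdot)$ into $H^s(\R)$, I would extract uniform bounds $M_0:=\sup_{t\in\I}\|u(t,\cdot)\|_\infty$ and $M_1:=\sup_{t\in\I}\|u_x(t,\cdot)\|_\infty$. This immediately gives $\|u^nu_x^m\|_\infty\le M_0^n M_1^m$ for all $n,m\in\mathbb{N}_0$ and $t\in\I$, which settles the $L^\infty$ part of the first claim.

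For the $L^1$ part (in the range of exponents actually needed in $F_t$, where $m\ge 2$), I would dominate pointwise: if $m\ge 2$ then $|u^n u_x^m|\le M_0^n M_1^{m-2}\,u_x^2$, and analogously by $u^2$ if $n\ge 2$; the degenerate case $n=m=1$ is handled by $|uu_x|\le\tfrac12(u^2+u_x^2)$. Since $u,u_x\in L^2(\R)$ for $u\in H^s(\R)$, integrability follows, with bounds uniform on $\I$.

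For the statement on $F_t$, I would use $g\in L^1\cap L^\infty$ and $g':=-\mathrm{sgn}(x)e^{-|x|}/2\in L^1\cap L^\infty$, together with Young's inequality $\|g\ast f\|_p\le\|g\|_1\|f\|_p$ for $p\in\{1,\infty\}$. Since the first part shows that $u^{k-2}u_x^3$ and $u^{k-1}u_x^2$ belong to $L^1(\R)\cap L^\infty(\R)$, the convolutions $\Lambda^{-2}(u^{k-2}u_x^3)=g\ast(u^{k-2}u_x^3)$ and $\partial_x\Lambda^{-2}(u^{k-1}u_x^2)=g'\ast(u^{k-1}u_x^2)$ lie in $L^1(\R)\cap L^\infty(\R)$, so $F_t$ does too.

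The last—and trickiest—point is $F_t\in C^1(\R)$. I would differentiate by passing the derivative onto the kernel. For the first summand, $g\in W^{1,1}(\R)$ gives $\partial_x(g\ast f_1)=g'\ast f_1$ for $f_1:=u^{k-2}u_x^3$, which is continuous as the convolution of an $L^1$ function with a bounded function. For the second, I would invoke the distributional identity $g''=g-\delta$ (i.e.\ $(1-\partial_x^2)g=\delta$) to write
\begin{equation*}
\partial_x\!\left[g'\ast(u^{k-1}u_x^2)\right]=g\ast(u^{k-1}u_x^2)-u^{k-1}u_x^2 .
\end{equation*}
Both $g\ast(u^{k-1}u_x^2)$ and, by the Sobolev embeddings noted above, $u^{k-1}u_x^2$ itself are continuous, so $\partial_x F_t$ is continuous, proving $F_t\in C^1(\R)$. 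The main obstacle I expect is justifying this distributional calculation rigorously, since it requires interpreting $\partial_x$ of the jumpy kernel $g'$ correctly; once the identity $g''=g-\delta$ is used, everything collapses to standard Young-type estimates.
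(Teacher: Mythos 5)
Your proof is correct, and its skeleton (Sobolev embedding for the $L^\infty$ bounds, a pointwise domination plus square-integrability for the $L^1$ bounds, and kernel estimates for $F_t$) matches the paper's. The differences are in the details and are worth noting. For the $L^1$ part the paper writes $|u^nu_x^m|\leq\|u\|_\infty^{n-1}\|u_x\|_\infty^{m-1}|u||u_x|$ and applies H\"older to $|u||u_x|$; your case split ($m\geq2$, $n\geq2$, or $n=m=1$, dominating by $u_x^2$, $u^2$, or $\tfrac12(u^2+u_x^2)$) is an equivalent elementary variant, and it has the virtue of making explicit that the claim only holds for $n+m\geq2$ --- as stated, with $m,n\in\mathbb{N}_0$, the proposition is overbroad, since $u\in H^s(\R)$ need not lie in $L^1(\R)$ and the case $n=m=0$ is plainly false; your restriction to the exponents actually occurring in $F_t$ is the right reading. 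For the $C^1$ claim the routes genuinely diverge: the paper uses the algebra property of $H^{s-1}(\R)$ to place $u^{k-2}u_x^3$ and $u^{k-1}u_x^2$ in $H^{s-1}(\R)$ and then lets $\Lambda^{-2}$ and $\p_x\Lambda^{-2}$ raise the Sobolev index, so that $F_t\in H^s(\R)\hookrightarrow C^1(\R)$; you instead differentiate at the level of the kernel, using $\p_x(g\ast f)=g'\ast f$ and the identity $\p_x^2\Lambda^{-2}=\Lambda^{-2}-1$ (i.e.\ $g''=g-\delta$), reducing everything to continuity of $L^1\ast L^\infty$ convolutions and of the products themselves. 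Your version is more hands-on and arguably more self-contained, since it avoids invoking the Sobolev algebra property and the mapping properties of $\Lambda^{-2}$ on $H^r$; the paper's version is shorter but leans on those facts without proof. Both are sound.
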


\begin{proof}
Since $u(t,\cdot)\in H^s(\R)$ and $s>3/2$, then both $u(t,\cdot)$ and $u_x(t,\cdot)$ belong to $L^\infty(\R)$ in view of the Sobolev Embedding Theorem \cite[page 317, Proposition 1.3]{taylor}. Moreover, the algebra property  \cite[page 320, Exercise 6]{taylor} implies that $u(t,\cdot)^nu_x(t,\cdot)^m\in H^{s-1}(\R)$ and again due to the Sobolev Embedding Theorem they are bounded and continuous. Moreover, $$|u(t,x)^nu_x(t,x)^m|\leq\|u(t,)\|_\infty^{n-1}\|u_x(t,\cdot)\|_{\infty}^{m-1}|u(t,x)||u_x(t,x)|,$$ 
and the Hölder's inequality yields $u(t,\cdot)^nu_x(t,\cdot)^m\in L^1(\R)$.

It remains to analyze $F_t(\cdot)$. It is immediate from \eqref{7.0.2} that $F_t(\cdot)\in C^1(\R)$ since the operators $\Lambda^{-2}$ and $\p_x\Lambda^{-2}$ increases the regularity of the products, involving $u$ and $u_x$. The other part follows from the fact that $\Lambda^{-2}h=g\ast h$, $\p_x\Lambda^{-2}h=(\p_xg)\ast h$, and both $g$ and $\p_xg\in L^1(\R)$. 
\end{proof}

The demonstration of Proposition \ref{prop6.1} proves the following result.
\begin{corollary}\label{cor7.1}
Under the conditions in Proposition \ref{prop6.1}, $F_t(\cdot),\,u(t,\cdot)^nu_x(t,\cdot)^m\in L^p(\R)$, for all $1\leq p\leq \infty$, and all non-negative integers $m,\,n\in\mathbb{N}$.
\end{corollary}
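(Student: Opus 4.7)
The plan is to obtain the corollary as an immediate consequence of Proposition \ref{prop6.1} via the standard $L^p$ interpolation between $L^1$ and $L^\infty$. The endpoints $p=1$ and $p=\infty$ are exactly the content of Proposition \ref{prop6.1}, so only the range $1<p<\infty$ requires any comment.

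First I would recall the elementary interpolation bound: for any measurable $f:\R\to\R$ with $f\in L^1(\R)\cap L^\infty(\R)$, and any $1<p<\infty$, one has
\begin{equation*}
\|f\|_p^p \;=\; \int_\R |f(x)|^{p-1}\,|f(x)|\,dx \;\leq\; \|f\|_\infty^{p-1}\,\|f\|_1,
\end{equation*}
so that $\|f\|_p \leq \|f\|_\infty^{1-1/p}\|f\|_1^{1/p}<\infty$. Applying this pointwise-in-$t$ to $f=u(t,\cdot)^n u_x(t,\cdot)^m$ and to $f=F_t(\cdot)$, both of which lie in $L^1(\R)\cap L^\infty(\R)$ by Proposition \ref{prop6.1}, yields the membership in $L^p(\R)$ for every $1<p<\infty$, with explicit bounds in terms of the $L^1$ and $L^\infty$ norms already controlled in the proof of that proposition.

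There is no real obstacle here: the corollary is a direct packaging of Proposition \ref{prop6.1} through a one-line interpolation argument, and the proof amounts to citing the inequality above and invoking the $L^1\cap L^\infty$ conclusions already established. If desired, one could additionally remark that the bound obtained is uniform in $t\in\I$, since the $L^1$ and $L^\infty$ norms of $u(t,\cdot)$ and $u_x(t,\cdot)$ are controlled on the compact interval $\I$ by the continuity of $t\mapsto u(t,\cdot)$ into $H^s(\R)$ together with the Sobolev embedding used in Proposition \ref{prop6.1}.
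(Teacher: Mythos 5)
Your proposal is correct and matches the paper's (implicit) argument: the paper simply notes that the corollary follows from the proof of Proposition \ref{prop6.1}, the point being exactly the standard inclusion $L^1(\R)\cap L^\infty(\R)\subseteq L^p(\R)$ via the interpolation inequality you state. Nothing further is needed.
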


\begin{proposition}\label{prop6.2}
Let $u\in C^0(\I,H^s(\R))$ be a solution of \eqref{1.0.6}, $F_t(\cdot)$ be the function \eqref{7.0.2} and $q\in\R$ such that
$$
\f{1}{2p}+\f{1}{q}=1.
$$
Then
\begin{enumerate}
    \item $\|u(t,\cdot)^{2p-1}\|_q=\|u(t,\cdot)\|_{2p}^{2p-1}$;
    \item
    $$\Big|\int_\R u(t,\cdot)^{2p-1}F_t(x)dx\Big|\leq \|u(t,\cdot)\|_{2p}^{2p-1}\|F_t(\cdot)\|_{2p.}$$
    \item
    $$\Big|\int_\R u_x(t,x)^{2p-1}\p_x F_t(x)dx\Big|\leq \|u_x(t,\cdot)\|_{2p}^{2p-1}\|\p_x F_t(\cdot)\|_{2p.}$$
\end{enumerate}
\end{proposition}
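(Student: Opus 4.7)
The plan is to recognize that the entire statement reduces to one application of Hölder's inequality once the power relation between $p$ and $q$ is correctly exploited. The hypothesis $\frac{1}{2p}+\frac{1}{q}=1$ rearranges to $q=\frac{2p}{2p-1}$, hence $(2p-1)q=2p$, which will be the only algebraic identity used.

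For item 1, I would compute directly:
\[
\|u(t,\cdot)^{2p-1}\|_q^q=\int_\R |u(t,x)|^{(2p-1)q}\,dx=\int_\R |u(t,x)|^{2p}\,dx=\|u(t,\cdot)\|_{2p}^{2p},
\]
and then take $q$-th roots, using $2p/q=2p-1$, to get the claimed equality. For item 2, Hölder's inequality with conjugate exponents $q$ and $2p$ gives
\[
\Big|\int_\R u(t,x)^{2p-1}F_t(x)\,dx\Big|\leq \|u(t,\cdot)^{2p-1}\|_q\,\|F_t(\cdot)\|_{2p},
\]
and substituting the identity from item 1 yields the bound. Item 3 is identical in structure but applied to $u_x$ in place of $u$ and $\partial_xF_t$ in place of $F_t$.

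The only thing that requires comment is the finiteness of every norm appearing on the right-hand side, i.e.\ making sure Hölder's inequality is actually applicable. For $F_t(\cdot)$ and $\partial_x F_t(\cdot)$ this is given directly by Corollary \ref{cor7.1}, which places them in $L^p(\R)$ for every $1\leq p\leq\infty$. For $u(t,\cdot)\in L^{2p}(\R)$ and $u_x(t,\cdot)\in L^{2p}(\R)$ (for $p\geq 1$, which is the relevant range), one combines $H^s(\R)\hookrightarrow L^\infty(\R)$ from the Sobolev embedding theorem with $H^s(\R)\subset L^2(\R)$, and interpolates:
\[
\|u(t,\cdot)\|_{2p}\leq \|u(t,\cdot)\|_2^{1/p}\,\|u(t,\cdot)\|_\infty^{1-1/p},
\]
with the analogous bound for $u_x(t,\cdot)$ (which is in $H^{s-1}(\R)$, still with $s-1>1/2$).

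There is no real obstacle here: the statement is essentially a bookkeeping lemma preparing for the energy estimates that will drive the proofs of Theorems \ref{teo2.5} and \ref{teo2.6}. The only minor subtlety is verifying the power identity $(2p-1)q=2p$ carefully so that item 1 reads out cleanly, after which items 2 and 3 are a one-line Hölder application each.
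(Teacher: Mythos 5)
Your proof is correct and follows essentially the same route as the paper: the identity $(2p-1)q=2p$ for item 1, then a single application of H\"older's inequality for items 2 and 3, with integrability of the factors supplied by Corollary \ref{cor7.1}. Your extra interpolation step for $\|u(t,\cdot)\|_{2p}$ is fine but not needed, since Corollary \ref{cor7.1} already places $u(t,\cdot)^n u_x(t,\cdot)^m$ in every $L^p(\R)$.
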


\begin{proof}
1.) Note that $q(2p-1)=2p$ and
$$
\|u(t,\cdot)^{2p-1}\|_{q}^q=\int|u(t,\cdot)|^{(2p-1)q}dx=\int u(t,\cdot)^{2p}dx=\|u(t,\cdot)\|_{2p}^{2p}.
$$
Therefore, $\|u(t,\cdot)^{2p-1}\|_q=\|u(t,\cdot)\|_{2p}^{2p/q}=\|u(t,\cdot)\|^{2p-1}_{2p}.$

Now we prove part 2. By Corollary \ref{cor7.1}, the Hölder's inequality and the part 1 of the Proposition, we have
$$
\ba{lcl}
\ds{\Big|\int_\R u(t,x)^{2p-1}F_t(x)dx\Big|}&\leq&\ds{\int_\R \Big|u(t,x)^{2p-1}F_t(x)\Big|dx\leq\|u(t,\cdot)^{2p-1}\|_{q}\|F_t(\cdot)\|_{2p}}\\
\\
&\leq&\ds{ \|u(t,\cdot)\|_{2p}^{2p-1}\|F_t(\cdot)\|_{2p.}}
\ea
$$

The demonstration of the last inequality is analogous and for this reason is omitted.
\end{proof}

Let $\theta$ be the number given by Theorem \ref{teo2.5}. For each integer $N$, let us consider the function
\bb\label{7.0.4}
\phi_N(x)=
\left\{
\ba{lcl}
e^{\theta |x|},&\text{if}&|x|<N,\\
\\
e^{\theta N},&\text{if}&|x|\geq N,
\ea
\right.
\ee

We note that $(\phi_N)_{N\in\mathbb{N}}\subseteq C^0(\R)$ is point-wisely convergent to the function $\phi(x)=e^{\theta|x|}$ and it is also $C^1(\R)$ almost everywhere (a.e.). Also, note that $\phi_N'(x)\leq\phi_N(x)$ a.e.

\begin{proposition}\label{prop6.3}
Let $u\in C^0(\I,H^s(\R))$, $s>3/2$, be a solution of \eqref{1.0.6}, $F_t(\cdot)$ be the function \eqref{7.0.2} and $\phi_N$ be the function \eqref{7.0.4}. Then
\begin{enumerate}
    \item There exists a constant $c_0>0$ such that 
    $$\phi_N(x)\int_\R\f{e^{-|x-y|}}{\phi_N(y)}dy\leq c_0;$$
    \item There exists a constant $c$, depending on $k$, $T_0$ and
    \bb\label{7.0.5}
    M^{1/k}:=\sup_{t\in\I}\|u(t,\cdot)\|_{H^s(\R)}+\sup_{t\in\I}\|u_x(t,\cdot)\|_{H^s(\R)}
    \ee
    such that
    $$
    |\phi_N(x)F_t(x)|\leq c(\|\phi_N(\cdot)u(t,\cdot)\|_{\infty}+\|\phi_N(\cdot)u_x(t,\cdot)\|_{\infty})
    $$
    and
    $$
    |\phi_N(x)\p_x F_t(x)|\leq c(\|\phi_N(\cdot)u(t,\cdot)\|_{\infty}+\|\phi_N(\cdot)u_x(t,\cdot)\|_{\infty})
    $$
\end{enumerate}
\end{proposition}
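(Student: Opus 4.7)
For Part 1, the key pointwise inequality is
\[
\frac{\phi_N(x)}{\phi_N(y)} \leq e^{\theta|x-y|}, \qquad x,y \in \R.
\]
To prove it I would split into the four cases corresponding to $|x|$ and $|y|$ each being less than or greater than $N$, and in each case reduce to the basic bound $\bigl||x|-|y|\bigr| \leq |x-y|$ together with $\theta \in (0,1)$. With this estimate in hand,
\[
\phi_N(x)\int_\R \frac{e^{-|x-y|}}{\phi_N(y)}\,dy \leq \int_\R e^{(\theta-1)|x-y|}\,dy = \frac{2}{1-\theta},
\]
so $c_0 := 2/(1-\theta)$ does the job.

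For Part 2, I would start from the representation
\[
F_t(x) = \frac{k(k-1)}{2}\, g \ast (u^{k-2}u_x^3)(x) - \frac{3k}{2}\,(\p_x g) \ast (u^{k-1}u_x^2)(x),
\]
where $g(x) = e^{-|x|}/2$ and $|\p_x g(x)| = e^{-|x|}/2$ a.e., so both kernels are controlled by $g$. The strategy is, inside each convolution, to factor off exactly one $u$ or one $u_x$ to be paired with $\phi_N$, and to bound the remaining factors in $L^\infty(\R)$ using Proposition \ref{prop6.1} together with the fact that $\|u(t,\cdot)\|_\infty, \|u_x(t,\cdot)\|_\infty \leq C\, M^{1/k}$ via the Sobolev embedding. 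For $k \geq 2$ this yields
\[
|u^{k-2}u_x^3|(y) \leq \|u(t,\cdot)\|_\infty^{k-2}\|u_x(t,\cdot)\|_\infty^{2}|u_x(y)| \leq M\,|u_x(y)|,
\]
\[
|u^{k-1}u_x^2|(y) \leq \|u(t,\cdot)\|_\infty^{k-2}\|u_x(t,\cdot)\|_\infty^{2}|u(y)| \leq M\,|u(y)|,
\]
since the exponents in the $L^\infty$ norms sum to $k$ in each case; the case $k=1$ must be handled separately, and is easier because the first convolution in $F_t$ disappears (its coefficient $k(k-1)$ vanishes) and the remaining integrand $u_x^2$ is bounded by $M|u_x|$. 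Inserting these pointwise estimates into the convolutions, multiplying by $\phi_N(x)$, writing $\phi_N(x) = \phi_N(x)\phi_N(y)^{-1}\phi_N(y)$ inside each integral, and applying Part 1 gives
\[
|\phi_N(x)F_t(x)| \leq \frac{Mc_0}{2}\!\left(\frac{k(k-1)}{2}\|\phi_N u_x\|_\infty + \frac{3k}{2}\|\phi_N u\|_\infty\right)\!,
\]
which has the desired form with $c$ depending on $k$, $T_0$ and $M^{1/k}$.

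For the derivative I would exploit the identity $\p_x^2 g = g - \delta$, which comes from $(1-\p_x^2)g = \delta$, to rewrite
\[
\p_x F_t(x) = \frac{k(k-1)}{2}(\p_x g) \ast (u^{k-2}u_x^3)(x) - \frac{3k}{2}\, g \ast (u^{k-1}u_x^2)(x) + \frac{3k}{2}u^{k-1}(x)u_x^2(x).
\]
The two convolution terms are bounded verbatim as before, while the pointwise residual is estimated directly by $\frac{3kM}{2}\|\phi_N u\|_\infty$ thanks to the factorization $|u^{k-1}u_x^2| \leq M|u|$. The main obstacles are the (slightly tedious) four-case bookkeeping in Part 1 and, in Part 2, making the exponent count yield exactly $M = (M^{1/k})^k$ while cleanly isolating the $k=1$ case, where the formally singular factor $u^{k-2}$ is neutralised by the vanishing coefficient $k(k-1)$.
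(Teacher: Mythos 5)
Your proof is correct and, for the central estimate in Part 2, follows the same route as the paper: insert $\phi_N(y)/\phi_N(y)$ into each convolution, peel off a single factor of $u$ or $u_x$ to pair with the weight, bound the remaining $k$ factors in $L^\infty$ by $M$ (Sobolev constants absorbed), and invoke Part 1. You diverge in two places, both by filling in material the paper outsources. For Part 1 the paper simply cites Eq.\ (2.19) of Himonas--Thompson, whereas you prove the key inequality $\phi_N(x)/\phi_N(y)\leq e^{\theta|x-y|}$ directly by the four-case analysis and then integrate; the case split is routine and works exactly as you indicate, with $\theta<1$ giving convergence and $c_0=2/(1-\theta)$. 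For the bound on $\p_x F_t$ the paper declares the proof ``analogous'' and omits it, while you observe that differentiating the second convolution term requires the identity $\p_x^2\Lambda^{-2}=\Lambda^{-2}-1$, which produces a local residual $\tfrac{3k}{2}u^{k-1}u_x^2$ that is not a convolution and must be estimated pointwise --- a genuine wrinkle the paper glosses over, and which you handle correctly. Whether you pair $\phi_N$ with $u_x$ in both convolutions (as the paper does) or with $u_x$ in one and $u$ in the other is immaterial, since the right-hand side contains both sup-norms. One small caveat: for $k=1$ the residual term becomes $u_x^2$, so it should be paired with $\|\phi_N u_x\|_\infty$ rather than $\|\phi_N u\|_\infty$; your factorization $|u^{k-1}u_x^2|\leq M|u|$ is only valid for $k\geq 2$, but since you already flag $k=1$ as a separate case this is a matter of bookkeeping, not a gap.
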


\begin{proof}
Part 1 follows from \cite[Eq. 2.19]{himjmp2014}.

For the first inequality of Part 2, note that
$$
\ba{lcl}
\Big|\phi_N(x)F_t(x)\Big|&\leq&\ds{\f{k(k-1)}{4}\phi_N(x)\int_\R e^{-|x-y|}\big(|u(t,y)|^{k-2}|u_x(t,y)|^3\big)dy}\\
\\
&&+\ds{\f{3k}{4}\int_\R e^{-|x-y|}\big(|u(t,y)|^{k-1}|u_x(t,y)|^2\big)dy}\\
\\
&=&\ds{\f{k(k-1)}{4}\phi_N(x)\int_\R \f{e^{-|x-y|}}{\phi_N(y)}\big(|\phi_N(y)u_x(t,y)||u(t,y)|^{k-2}|u_x(t,y)|^2\big)dy}\\
\\
&&+\ds{\f{3k}{4}\int_\R \f{e^{-|x-y|}}{\phi_N(y)}\big(|\phi_N(y)u_x(t,y)||u(t,y)|^{k-1}|u_x(t,y)|\big)dy}\\
\\
&\leq&\ds{\f{k(k-1)}{4}\|u(t,\cdot)\|_\infty^{k-2}\|u_x(t,\cdot)\|_\infty^{2}\|\phi_N(\cdot)u_x(t,\cdot)\|_\infty\Big(\phi_N(x)\int_\R \f{e^{-|x-y|}}{\phi_N(y)}dy\Big)}\\
\\
&&
+\ds{\f{3k}{4}\|u(t,\cdot)\|_\infty^{k-1}\|\phi_N(\cdot)u_x(t,\cdot)\|_\infty\|u_x(t,\cdot)\|_\infty\Big(\phi_N(x)\int_\R \f{e^{-|x-y|}}{\phi_N(y)}dy\Big)}\\
\ea
$$

Let $c:=(k(k-1)/4+3k/4)c_0M$. Then 
$$|\phi_N(x)F_t(x)|\leq c(\|\phi_N(\cdot)u(t,\cdot)\|_{\infty}+\|\phi_N(\cdot)u_x(t,\cdot)\|_{\infty}).$$

The other inequality is proved following the same steps and for this reason its demonstration is omitted.
\end{proof}

\subsection{Proof of Theorem \ref{teo2.5}}

Note that the theorem is proved if we can find a constant $L>0$ such that 
$$
|e^{\theta|x|}u(t,x)|+|e^{\theta|x|}u_x(t,x)|\leq L.
$$

We begin by multiplying \eqref{7.0.1} by $u^{2p-1}$ and integrating the result with respect to $x$ over $\R$, which yields
\bb\label{7.1.1}
\f{1}{2p}\f{d}{dt}\int_\R u(t,x)^{2p}dx=-\int_\R u(t,x)^{2p+k-1}u_x(t,x)dx+\int_\R u(t,x)^{2p-1}F_t(x)dx.
\ee

However, we have the following identity and estimate:
$$
\f{1}{2p}\f{d}{dt}\int_\R u(t,x)^{2p}dx=\|u(t,\cdot)\|_{2p}^{2p-1}\f{d}{dt}\|u(t,\cdot)\|_{2p},
$$
$$
\Big|\int_\R u(t,x)^{2p+k-1}u_x dx\Big|\leq \|u(t,\cdot)\|_\infty^{k-1}\|u_x(t,\cdot)\|_\infty\|u(t,\cdot)\|_{2p}^{2p}\leq M \|u(t,\cdot)\|_{2p}^{2p}.
$$

Therefore, from the relations above, Proposition \ref{prop6.2} and equation \eqref{7.1.1} we have
\bb\label{7.1.2}
\f{d}{dt}\|u(t,\cdot)\|_{2p}\leq M \|u(t,\cdot)\|_{2p}+\|F_t(\cdot)\|_{2p}.
\ee

Now, multiplying \eqref{7.0.3} by $u_x^{2p-1}$, integrating the result with respect to $x$ over $\R$, we obtain
\bb\label{7.1.3}
\ba{lcl}
\ds{\f{1}{2p}\f{d}{dt}\int_\R u_x(t,x)^{2p}dx}&+&\ds{\int_\R ku(t,x)^{k-1}u_x(t,x)^{2p+1}dx+\int_\R u(t,x)^ku_{x}(t,x)^{2p-1}u_{xx}(t,x)dx}\\
\\
&=&\ds{\int_\R u_x(t,x)^{2p-1}\p_x F_t(x)dx.}
\ea
\ee

We now observe that
$$
\f{1}{2p}\f{d}{dt}\int_\R u_x(t,x)^{2p}dx=\|u_x(t,\cdot)\|_{2p}^{2p-1}\f{d}{dt}\|u_x(t,\cdot)\|_{2p},
$$
$$
\Big|\int_\R ku(t,x)^{k-1}u_x(t,x)^{2p+1}dx\Big|\leq k M \|u_x(t,\cdot)\|_{2p}^{2p},
$$
$$
\int_\R u(t,x)^ku_{x}(t,x)^{2p-1}u_{xx}(t,x)dx=-\f{k}{2p}\int_\R u(t,x)^{k-1}u_x(t,x)^{2p+1}dx,
$$
which implies that
$$
\Big|\int_\R u(t,x)^ku_{x}(t,x)^{2p-1}u_{xx}(t,x)dx\Big|\leq kM\|u_x(t,\cdot)\|_{2p}^{2p}.
$$

Therefore, by Proposition \ref{prop6.2} and the relations above, from \eqref{7.1.3} we obtain
\bb\label{7.1.4}
\f{d}{dt}\|u_x(t,\cdot)\|_{2p}\leq 2k M \|u_x(t,\cdot)\|_{2p}+\|\p_xF_t(\cdot)\|_{2p}.
\ee

Now, multiplying \eqref{7.0.1} by $\phi_N(x)(\phi_N(x)u(t,x))^{2p-1}$, \eqref{7.0.3} by $\phi_N(x)(\phi_N(x)u_x(t,x))^{2p-1}$, and proceeding similarly as before for obtaining \eqref{7.1.2} and \eqref{7.1.4}, we obtain
\bb\label{7.1.5}
\f{d}{dt}\|\phi_N(\cdot)u(t,\cdot)\|_{2p}\leq M\|\phi_N(\cdot)u(t,\cdot)\|_{2p}+\|\phi_N(\cdot)F_t(\cdot)\|_{2p}
\ee
and 
\bb\label{7.1.6}
\f{d}{dt}\|\phi_N(\cdot)u_x(t,\cdot)\|_{2p}\leq (k+1) M\|\phi_N(\cdot)u_x(t,\cdot)\|_{2p}+\|\phi_N(\cdot)\p_xF_t(\cdot)\|_{2p}
\ee

Let $U(t):=\|\phi_N(\cdot)u(t,\cdot)\|_{2p}+\|\phi_N(\cdot)u_x(t,\cdot)\|_{2p}$. Adding \eqref{7.1.5} and \eqref{7.1.6}, we obtain
$$
\f{d}{dt}U(t)\leq (k+1)M U(t)+\|\phi_N(\cdot)F_t(\cdot)\|_{2p}+\|\phi_N(\cdot)\p_xF_t(\cdot)\|_{2p}
$$

Noticing that $e^{-(k+1)Mt}\leq 1$ and $e^{(k+1)Mt}\leq e^{(k+1)MT_0}$, application of Gronwall's inequality gives
$$U(t)\leq e^{(k+1)MT_0}U(0)+e^{(k+1)MT_0}\int_0^t\left(\|\phi_N(\cdot)F_\tau(\cdot)\|_{2p}+\|\phi_N(\cdot)\p_xF_\tau(\cdot)\|_{2p}\right)d\tau.$$

Recalling that $U(t):=\|\phi_N(\cdot)u(t,\cdot)\|_{2p}+\|\phi_N(\cdot)u_x(t,\cdot)\|_{2p}$, taking the limit $p\rightarrow\infty$ in the last inequality, we have
\bb\label{7.1.7}
\ba{lcl}
\|\phi_N(\cdot)u(t,\cdot)\|_\infty+\|\phi_N(\cdot)u_x(t,\cdot)\|_\infty&\leq& e^{(k+1)MT_0}\Big[\|\phi_N(\cdot)u_0(\cdot)\|_\infty+\|\phi_N(\cdot)u_0'(\cdot)\|_\infty\\
\\
&&\ds{+\int_0^t\left(\|\phi_N(\cdot)F_\tau(\cdot)\|_{\infty}+\|\phi_N(\cdot)\p_xF_\tau(\cdot)\|_{\infty}\right)d\tau}
\Big].
\ea
\ee

We note that from Proposition \ref{prop6.3} we have the estimate
$$
\|\phi_N(\cdot)F_\tau(\cdot)\|_{\infty}+\|\phi_N(\cdot)\p_xF_\tau(\cdot)\|_{\infty}\leq c(\|\phi_N(\cdot)u(t,\cdot)\|_{\infty}+\|\phi_N(\cdot)u_x(t,\cdot)\|_{\infty}),
$$
for some constant $c>0$. Therefore, the estimate above jointly with \eqref{7.1.7} imply
\bb\label{7.1.8}
\|\phi_N(\cdot)u(t,\cdot)\|_\infty+\|\phi_N(\cdot)u_x(t,\cdot)\|_\infty\leq c(\|\phi_N(\cdot)u_0(\cdot)\|_{\infty}+\|\phi_N(\cdot)u_0'(\cdot)\|_{\infty}).
\ee

We recall that both $|u_0|$ and $|u_0'|$ are $ O(e^{-\theta|x|})$, which mean that $e^{\theta|x|}u_0(x)$ and $e^{\theta|x|}u_0'(x)$ are bounded for $|x|\gg1$. Therefore, $\max\{u_0(x),e^{\theta|x|}u_0(x)\}$ is bounded for any $x\in\R$. As a consequence, we have
\bb\label{7.1.9}
\ba{lcl}
\ds{|\phi_N(x)u_0(x)|}&\leq&\ds{|\max\{1,e^{\theta |x|}\}u_0(x)|\leq \|u_0(\cdot)\max\{1,e^{\theta|\cdot|}\}\|_\infty=:L_1.}
\ea
\ee

Analogously, we conclude that 
\bb\label{7.1.10}
\ba{lcl}
\ds{|\phi_N(x)u'_0(x)|}&\leq&\ds{\max\{1,e^{\theta |x|}\}|u'_0(x)|\leq\|u'_0(\cdot)\max\{1,e^{\theta|\cdot|}\}\|_\infty=:L_2.}
\ea
\ee

Therefore, \eqref{7.1.9} and \eqref{7.1.10} imply that the sum $\|\phi_N(\cdot)u(t,\cdot)\|_\infty+\|\phi_N(\cdot)u_x(t,\cdot)\|_\infty$ is bounded from above by $L:=c(L_1+L_2)$. Since $L$ does not depend on $N$ and $t$, taking the limit $N\rightarrow\infty$ in \eqref{7.1.8} we conclude that
$$
\ba{lcl}
\ds{|e^{\theta|x|}u(t,x)|+|e^{\theta|x|}u_x(t,x)|}&=&\ds{\lim_{N\rightarrow\infty}\left(|\phi_N(x)u(t,x)|+|\phi_N(x)u_x(t,x)|\right)}\\
\\
&\leq&\ds{\lim_{N\rightarrow\infty}\left(\|\phi_N(\cdot)u(t,\cdot)\|_\infty+\|\phi_N(\cdot)u_x(t,\cdot)\|_\infty\right)\leq L}.
\ea
$$
\subsection{Proof of Theorem \ref{teo2.6}}

We begin this subsection with a technical and useful result:

\begin{proposition}\label{prop7.4}
Assume that $u\in C^0(\I,H^s(\R))$, $s\geq3$, is a non-zero solution of \eqref{1.0.10}, $\al$ and $t_1$ as in Theorem \ref{teo2.6}. Suppose that at least one of the following conditions is satisfied:
\begin{enumerate}
    \item $k=1$;
    \item $k$ is even and $m_0(x)\geq0$, for all $x\in\R$;
    \item $k$ is odd and either $m_0(x)\geq0$ or $m_0(x)\leq0$, for all $x\in\R$.
\end{enumerate}  
Then there exists a constant $\kappa>0$ such that the function defined in \eqref{7.0.2} satisfies the inequality.
$$
\lim_{x\rightarrow\infty}\f{1}{e^{-x}}\int_0^{t_1} F_\tau(x)d\tau\geq \kappa .
$$

In particular, $F_t(x)\sim O(e^{-x})$, but not $o(e^{-x})$.
\end{proposition}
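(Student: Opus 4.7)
The plan is to extract the $e^{-x}$ asymptotics of $F_t(x)$ as $x \to \infty$ directly from the convolution kernel of $\Lambda^{-2}$, show the ``growing'' part of $F_t$ is negligible in that limit, and verify that the leading coefficient, integrated over $[0,t_1]$, is strictly positive (thereby furnishing $\kappa$). Using $\Lambda^{-2}(h)(x) = \tfrac{e^{-x}}{2}\!\int_{-\infty}^x e^y h\,dy + \tfrac{e^x}{2}\!\int_x^\infty e^{-y}h\,dy$ and the corresponding identity for $\partial_x\Lambda^{-2}$, I substitute into \eqref{7.0.2} to obtain
\[
F_t(x) = \tfrac{e^{-x}}{2}\mathcal{A}_t(x) + \tfrac{e^x}{2}\mathcal{B}_t(x),
\]
with $\mathcal{A}_t(x) := \int_{-\infty}^x e^y J_+(t,y)\,dy$, $\mathcal{B}_t(x) := \int_x^\infty e^{-y}J_-(t,y)\,dy$, and $J_\pm := \tfrac{k(k-1)}{2}u^{k-2}u_y^3 \pm \tfrac{3k}{2}u^{k-1}u_y^2$.

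Applying Theorem \ref{teo2.5} (or an obvious one-sided adaptation, given that the hypotheses of Theorem \ref{teo2.6} only control $x \to +\infty$) yields $|u(\tau,x)|, |u_x(\tau,x)| = O(e^{-\alpha x})$ uniformly in $\tau\in\I$ as $x\to\infty$. Since $(k+1)\alpha > 1$, the integrand $e^{-y}J_-(t,y)$ decays like $e^{-(1+(k+1)\alpha)y}$, so $|\mathcal{B}_t(x)| = O(e^{-(1+(k+1)\alpha)x})$ and hence $\tfrac{e^x}{2}\mathcal{B}_t(x) = O(e^{-(k+1)\alpha x}) = o(e^{-x})$ uniformly in $\tau$. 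The same estimate makes $y\mapsto e^y J_+(t,y)$ integrable on $\R$, so $\mathcal{A}_t(x) \to \mathcal{A}_t := \int_\R e^y J_+(t,y)\,dy$ as $x\to\infty$. Dominated convergence in $\tau$ then gives
\[
\lim_{x \to \infty}\frac{1}{e^{-x}}\int_0^{t_1} F_\tau(x)\,d\tau = \tfrac{1}{2}\int_0^{t_1}\mathcal{A}_\tau\,d\tau,
\]
so the proposition reduces to showing this last integral is strictly positive.

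I would then factor $J_+ = \tfrac{k}{2}u^{k-2}u_y^2[(k-1)u_y + 3u]$ and introduce $a := u - u_y = e^{-y}\!\int_{-\infty}^y e^s m\,ds$ and $b := u + u_y = e^y\!\int_y^\infty e^{-s}m\,ds$, which by Theorem \ref{teo3.1} satisfy $\sign(a) = \sign(b) = \sign(m)$ and the transport-like identities $a_y = m-a$, $b_y = b-m$. Writing $u = (a+b)/2$, $u_y = (b-a)/2$ converts $\mathcal{A}_t$ into
\[
\mathcal{A}_t = \tfrac{k}{2^{k+2}}\int_\R e^y (a+b)^{k-2}(b-a)^2\bigl[(k+2)b + (4-k)a\bigr]\,dy.
\]
In case 1 this collapses to $\mathcal{A}_t = \tfrac{3}{4}\int e^y u_y^2\,dy \geq 0$, strictly positive whenever $u(\tau,\cdot)\not\equiv 0$. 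In cases 2 and 3 the prefactor $(a+b)^{k-2}(b-a)^2$ is already non-negative under the sign hypothesis on $m$; direct sign analysis of the bracket handles small values of $k$, while for larger $k$ an integration by parts using $a_y = m - a$ and $b_y = b - m$ rewrites $\mathcal{A}_t$ with a manifestly non-negative leading term $\tfrac{3k}{4}\int e^y u^{k-2}u_y^2 m\,dy$ plus a remainder whose sign is also dictated by the hypothesis on $m$. Since $u$ is assumed non-zero, Theorem \ref{teo2.2} rules out $u \equiv 0$ on any open subset of $[0,T)\times\R$; hence on an open subset of $[0,t_1]$ one has $u(\tau,\cdot)\not\equiv 0$, which gives $\mathcal{A}_\tau > 0$ there, and so $\int_0^{t_1}\mathcal{A}_\tau\,d\tau > 0$. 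Taking $\kappa$ as half of this integral closes the proof.

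The main obstacle is the positivity of $\mathcal{A}_\tau$ in cases 2 and 3 for moderately large $k$, since the bracket $(k+2)b + (4-k)a$ need not be of definite sign pointwise in the $(a,b)$ representation. Securing the required identity demands careful integration by parts that exploits both the sign preservation of $m$ along characteristics (Theorem \ref{teo3.1}) and the transport structure $m_t + u^k m_x = 0$, so as to present $\mathcal{A}_t$ as a sum of manifestly sign-matched contributions.
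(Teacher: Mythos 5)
Your decomposition of $F_t$ into a leading $e^{-x}$ part and a subleading $e^{x}\mathcal{B}_t(x)=o(e^{-x})$ part, the reduction to strict positivity of $\int_0^{t_1}\mathcal{A}_\tau\,d\tau$, and the treatment of the case $k=1$ all coincide with the paper's proof (which likewise splits $F_t=G_t+\tilde G_t$ and shows $\tilde G_t\sim o(e^{-x})$ from the $O(e^{-\alpha x})$ persistence of $u,u_x$ and $(k+1)\alpha>1$). The genuine gap is in cases 2 and 3: you never actually establish that $\mathcal{A}_\tau\geq0$, let alone that it is strictly positive somewhere. After passing to $a=u-u_x$, $b=u+u_x$ you arrive at the bracket $(k+2)b+(4-k)a$, correctly observe that it has no definite sign pointwise, and then assert that an unspecified integration by parts will produce ``a manifestly non-negative leading term plus a remainder whose sign is also dictated by the hypothesis on $m$.'' That identity is precisely the content of the proposition in these cases and it is not supplied; as your own closing paragraph concedes, this is ``the main obstacle,'' so the proof is not closed.

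For comparison, the paper does not change variables at all: it writes the integrand of $G_t$ as $u^{k-2}u_x^2\left[\frac{k(k-1)}{4}u_x+\frac{3k}{4}u\right]$, bounds it below by $c_k\,u^{k-2}u_x^2\,(u+u_x)$ with $c_k=\min\{k(k-1)/4,\,3k/4\}$, and then invokes items 3 and 4 of Theorem \ref{teo2.1} --- i.e.\ the convolution representation \eqref{4.1.2} --- to get $u\geq0$ and $u+u_x\geq0$ from $m_0\geq0$ (with the obvious sign flips for $k$ odd and $m_0\leq0$), so the lower bound is pointwise non-negative and the argument from case (a) applies verbatim. The missing ingredient in your write-up is exactly this use of the sign of $u+u_x$, which in the $(a,b)$ notation is the sign of $b$ alone: writing $(k-1)u_x+3u=(k-1)(u+u_x)+(4-k)u$ makes both terms non-negative for $k\leq4$ and would let you finish without any integration by parts. (One should note that for $k>4$ the paper's own lower bound discards a positive multiple of $u_x$, which can be negative, so even the published inequality is delicate there; but your proposal stops one step earlier and does not secure the sign in any of these cases.)
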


\begin{proof} Let us first fix $t\in\I$. From \eqref{7.0.2} we have
$$
\ba{lcl}
F_t(x)&=&\ds{e^{-x}\Big[\f{k(k-1)}{4}\int_{-\infty}^x e^y(u^{k-2}u_x^3)(t,y)dy+\f{3k}{4}\int_{-\infty}^xe^y(u^{k-1}u_x^2)(t,y)dy\Big]}\\
\\
 &&+\ds{e^{x}\Big[\f{k(k-1)}{4}\int^{\infty}_x e^{-y}(u^{k-2}u_x^3)(t,y)dy-\f{3k}{4}\int^{\infty}_xe^{-y}(u^{k-1}u_x^2)(t,y)dy\Big]}
\ea
$$

Since $|u_0(x)|\sim o(e^{-x})$, we then have $|u_0(x)|\sim O(e^{-\al x})$. By Theorem \ref{teo2.5} both $u$ and $u_x$ are $O(e^{-\al x})$ and, therefore,
$$
u^{k-2}u_x^3,\,\,u^{k-1}u_x^2\sim O(e^{-\al(k+1)x})\Rightarrow u^{k-2}u_x^3,\,\,u^{k-1}u_x^2\sim o(e^{-x}).
$$

As a consequence
$$
\tilde{G}_t(x):=e^{x}\Big(\f{k(k-1)}{4}\int^{\infty}_x e^y(u^{k-2}u_x^3)(t,y)dy-\f{3k}{4}\int^{\infty}_xe^{-y}(u^{k-1}u_x^2)(t,y)dy\Big) \sim e^x\,o(e^{-2x})\sim o(e^{-x}).
$$

Let
\bb\label{7.2.1}
G_t(x):=e^{-x}\Big[\f{k(k-1)}{4}\int_{-\infty}^x e^y(u^{k-2}u_x^3)(t,y)dy+\f{3k}{4}\int_{-\infty}^xe^y(u^{k-1}u_x^2)(t,y)dy\Big]
\ee
and
$$
H(x):=\int_0^{t_1}G_\tau(x)dx.
$$

We observe that $$
F_t(x)=\tilde{G}_t(x)+G_t(x)
$$
and we prove the result if we show that $H(x)\geq\kappa e^{-x}$, for some $\kappa>0$. We divide our proof in three different cases: $k=1$ and $k\geq 2$, whereas the last case is subdivided into $k$ even or $k$ odd. 
\begin{itemize}
    \item[(a)]{\bf $k=1$.} We firstly observe that
    $$G_t(x)=\f{3}{4}e^{-x}\int_{-\infty}^x e^yu_x(t,y)^2 dy$$
    and
    $$
    \int_0^{t_1} G_\tau (x)d\tau=\f{3}{4}e^{-x}\int_{-\infty}^x e^y\mu(y)dy,
    $$
    where 
    $$\mu(y):=\int_0^{t_1}u_x(\tau,y)^2d\tau.$$
    Since $u$ is not identically zero we conclude that $\mu(\cdot)$ is a continuous, non-negative function and non-identically zero, so that for $x$ large enough, we have a constant $k_1>0$ such that
    $$\int_{-\infty}^xe^y\mu(y) \geq k_1.$$
    
    Therefore, taking $\kappa:=3 k_1/4$, we have $H(x)\geq \kappa e^{-x}$.
    \item[(b)] {\bf $k$ is even and $m_0(x)\geq 0$, for all $x\in\R$.} In this case, let $c_k:=\min\{k(k-1)/4,3k/4)\}$. In particular, note that $c_k>0$. From \eqref{7.2.1} we have
    $$
    G_t(x)\geq c_k e^{-x}\int_{-\infty}^x e^y u_x^2(t,y)u(t,y)^{k-2}(u+u_x)(t,y)dy
    $$
    and
    $$
    H(x)\geq c_k e^{-x}\int_{-\infty}^x e^y \mu(y)dy,
    $$
    where
    $$
    \mu(y):=\int_0^{t_1}u_x^2(\tau,y)u(\tau,y)^{k-2}(u+u_x)(\tau,y)d\tau
    $$
    Since $k$ is even and the initial data is non-negative, by Theorem \ref{teo2.1} and \eqref{4.1.2} we conclude that $e^y u_x^2(t,y)u(t,y)^{k-2}(u+u_x)(t,y)\geq0$ and an argument analogous as the previous case shows that  
    $$\int_{-\infty}^x e^y \mu(y)dy\geq k_2,$$
    for some positive constant $k_2$. Then, defining $\kappa:=c_k k_2$, we obtain $H(x)\geq \kappa e^{-x}$.
    
    \item[(c)]{\bf $k$ is odd and $m_0(x)\geq0$ or $m_0(x)\leq0$, for all $x\in\R$.} Since $k$ is odd and the initial data is either non-negative or non-positive, then $u(t,y)^{k-2}(u+u_x)(t,y)\geq0$. By Theorem \ref{teo2.1} and equation \eqref{4.1.2} we can once find a constant $k_3>0$ such that 
    $$\int_{-\infty}^x e^y \mu(y)dy\geq k_3,$$
    and then, defining $\kappa:=c_k k_2$, we conclude that $H(x)\geq\kappa e^{-x}$.
\end{itemize}

In any circumstance we have that $H(x)\geq \kappa e^{-x}$, for some constant $\kappa>0$. Then 
$$
\int_0^{t_1}F_\tau(x)dx=o(e^{-x})+H(x).
$$

Since $e^x H(x)\geq\kappa$ for $x\gg1$, we obtain the result.
\end{proof}

{\bf Proof of Theorem \ref{teo2.6}}. 

Similarly as in Proposition \ref{prop7.4}, the fact that $|u_0(x)|\sim o(e^{-x})$ implies $|u_0(x)|\sim O(e^{-\al x})$ and from Theorem \ref{teo2.5} we have
$$
|u^{k}u_x|\sim O(e^{-\al(k+1)x})\Rightarrow |u^{k}u_x|\sim o(e^{-x}).
$$

Also, since both $|u_0(x)|$ and $|u(t_1,x)|$ are $o(e^{-x})$. Therefore, integrating \eqref{7.0.1} with respect to $t$ from $0$ to $t_1$, we obtain
\bb\label{7.2.2}
\int_0^{t_1}F_\tau (x)d\tau=\underbrace{u(t_1,x)-u_0(x)}_{\sim o(e^{-x})}+\underbrace{\int_0^{t_1}(u^ku_x)(\tau,x)d\tau}_{\sim o(e^{-x})}\sim o(e^{-x})
\ee

If we could find $(t_0,x_0)$ such that $u(t_0,x_0)\neq0$, then \eqref{7.2.2} would contradict Proposition \ref{prop7.4}.

\section{Discussion}\label{sec8}

Theorem \ref{teo2.1}, among other results, shows that if the initial momentum $m_0\in H^1(\R)$ is compactly supported, then this property persists for any value of $t$ as long as the corresponding solution $u$ of the equation
\bb\label{8.0.1}
u_t-u_{txx}+u^ku_x-u^ku_{xxx}=0
\ee
exists. We observe that the local well-posedness of \eqref{8.0.1} subject to $u(0,x)=u_0(x)$ is granted by \cite[Corollary 2.1]{yan}, see also \cite[Theorem 1.1]{himjmp2014}.

For $k=1$, \cite[Theorem 3.1]{zhou} shows that if $u_0\in H^3(\R)$ is compactly supported, then the same does not hold for the corresponding solution $u(t,x)$, $t>0$. In \cite{himjmp2014} the authors studied \eqref{1.0.5} (with $c=1$) and they showed that its solutions, subject to $u(0,x)=u_0(x)$ cannot be compactly supported for any $t>0$ as long as $u_0$ is not trivial, that is, non-zero. However, the conditions imposed on the equation in \cite{himjmp2014} cannot cover \eqref{8.0.1}.

Our Theorem \ref{teo2.6}, whose demonstration is strongly dependent on the results proved in Theorem \ref{teo2.5}, shed light to this point and actually, generalizes the results proved in \cite{himjmp2014,zhou} in this matter, to \eqref{8.0.1}.

The results of theorems \ref{teo2.2}, \ref{teo2.3} and \ref{teo2.6}, in fact, answer some open questions raised in \cite{himjmp2014} (and reproduces in the Introduction) about unique continuation results for \eqref{1.0.5} with $c=1$ (we answered the question for the case $b=0$). Theorems \ref{teo2.2} and \ref{teo2.3} are restrict to the case $k=1$, whereas Theorem \ref{teo2.6} is proved for general values of $k$, but with conditions on the initial data.

Observe that Theorem \ref{teo2.3} improves the results of Theorem \ref{teo2.2}. Actually, while in Theorem \ref{teo2.2} we requested that the solution $u$ vanishes on an open set of the type $(0,T)\times I$, in Theorem \ref{teo2.3} we requested that $u$ would vanish on $(t_0,t_1)\times I\subseteq (0,T)\times\R$, for some open interval $I$. The price paid to relax the condition in Theorem \ref{teo2.2} is the imposition that the initial momentum does not change its sign. As a consequence of this hypothesis, Theorem \ref{teo3.1} assures the conservation of both $\|u(t,\cdot)\|_1$ and $\|m(t,\cdot)\|_1$. The proof of Theorem \ref{teo2.3} has two main pillars that consists on the use of the ideas introduced in \cite{linares} combined with the use of a conserved quantity, as pointed in \cite{igor-dgh}, see also \cite{pri-mon} for further discussions and geometrical meaning of this approach. It is worth mentioning that recently one of us has studied \eqref{1.0.10} in Gevrey spaces, see \cite{pri}.

The answer given by theorems \ref{teo2.2} and \ref{teo2.3} are novel and innovative, since they are only possible due to recent developments about unique continuation and persistence properties for the solutions of some shallow water model recently developed in \cite{igor-dgh,linares}, see also \cite{pri-mon,igor-arxiv1,igor-arxiv2}. These techniques are essentially geometric \cite{pri-mon} and based on physical aspects of the models \cite{igor-dgh,igor-arxiv1}.

We also generalized some results from \cite{himjmp2014,yan,zhou} to \eqref{8.0.1} with $k=1$ and subject to $u(0,x)=u_0(x)$, see Theorem \ref{teo2.4}. The key ingredient for proving this result is the conditions $m_0\in L^1(\R)\cap H^1(\R)$ and it does not change sign. While the latter implies that $u_0$ also does not change sign, the former has a consequence the fact that the corresponding solution $u$ has the $x-$derivative bounded from below by $-\|m_0\|_{1}$, which essentially reduces the demonstration of Theorem \ref{teo2.4} to the proof of Lemma \ref{lema5.1}. For its turn, such lemma is proved using the relations \eqref{5.0.1}--\eqref{5.0.3}.

The identity \eqref{5.0.1} has a very important consequence: it gives a necessary condition for the wave-breaking of the solutions of \eqref{8.0.1} with $k=1$, see \eqref{5.0.5}, which proves Corollary \ref{cor2.3}. We, however, are unable to find sufficient conditions for this blow-up. This make us point out the following provocation:

\begin{conjecture}
Let $k=1$, $u_0\in H^s(\R)$, $s>3/2$, and $u$ the corresponding solution to \eqref{8.0.1} with lifespan $T$. Then $u$ breaks at finite time if and only if $\lim\limits_{t\nnearrow T}|u_x(t,x)|=+\infty$.
\end{conjecture}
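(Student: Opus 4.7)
The conjecture is a BKM-type blow-up criterion, and the plan is to handle the two implications separately. Reading ``$u$ breaks at finite time'' as $T<\infty$ with $\|u(t,\cdot)\|_{H^s}\to\infty$ as $t\nnearrow T$, and interpreting $\lim_{t\nnearrow T}|u_x(t,x)|=+\infty$ as $\lim_{t\nnearrow T}\|u_x(t,\cdot)\|_\infty=+\infty$, I would first dispose of the easy direction: if the slope diverges at a finite $T$, then the Sobolev embedding $H^{s-1}(\R)\hookrightarrow L^\infty(\R)$ (valid since $s>3/2$) gives $\|u_x(t,\cdot)\|_\infty\leq C\|u(t,\cdot)\|_{H^s}$, so the $H^s$ norm diverges as well and the solution breaks.

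The substantive direction is the converse, for which I would prove the contrapositive: if $\|u_x(t,\cdot)\|_\infty\leq K$ uniformly on $[0,T)$, then $\|u(t,\cdot)\|_{H^s}$ remains bounded and $T$ cannot be the lifespan. When $s=3$ the work is already essentially done by Lemma~\ref{lema5.1} with $\kappa=K$: its hypothesis $u_x>-\kappa$ is trivially implied by $|u_x|\leq K$, and its conclusion $\|u(t,\cdot)\|_{H^3}\lesssim e^{Kt/2}\|u_0\|_{H^3}$ rules out finite-time breakdown. For general $s>3/2$, I would derive the analogous bound by an $H^s$ energy estimate on the nonlocal form $u_t+uu_x=-\tfrac{3}{2}\partial_x\Lambda^{-2}(u_x^2)$ (equations \eqref{7.0.1}--\eqref{7.0.2} with $k=1$): apply $\Lambda^s$, pair with $\Lambda^s u$, integrate by parts the transport piece $\int u\,(\partial_x\Lambda^s u)\,\Lambda^s u\,dx=-\tfrac{1}{2}\int u_x(\Lambda^s u)^2\,dx$, control the commutator by the Kato--Ponce estimate
\begin{equation*}
\|[\Lambda^s,u]u_x\|_{L^2}\leq C\bigl(\|u_x\|_\infty\|u_x\|_{H^{s-1}}+\|u\|_{H^s}\|u_x\|_\infty\bigr)\leq C\|u_x\|_\infty\|u\|_{H^s},
\end{equation*}
and control the nonlocal term via the identity $\|\partial_x\Lambda^{-2}f\|_{H^s}\lesssim\|f\|_{H^{s-1}}$ combined with the Moser estimate $\|u_x^2\|_{H^{s-1}}\lesssim\|u_x\|_\infty\|u_x\|_{H^{s-1}}$. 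The net result is
\begin{equation*}
\frac{d}{dt}\|u(t,\cdot)\|_{H^s}^2 \leq C\,\|u_x(t,\cdot)\|_\infty\,\|u(t,\cdot)\|_{H^s}^2,
\end{equation*}
and Grönwall's inequality together with $\|u_x\|_\infty\leq K$ closes the argument.

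The main obstacle is the low-regularity execution of the above energy estimate when $s$ is only slightly larger than $3/2$: one cannot legally apply $\Lambda^s$ and integrate by parts directly for $u\in H^s$ with $s$ close to $3/2$, so the standard remedy is to regularise (Friedrichs mollifiers or a viscous approximation), obtain the differential inequality uniformly for smooth approximants, and pass to the limit using the local well-posedness theory cited from \cite{yan}. A secondary point worth noting is that even this argument, once made rigorous, only produces the conditional criterion asserted in the conjecture; it does not exhibit any initial datum for which $T<\infty$ genuinely occurs, which is presumably why the authors speak of their inability to find \emph{sufficient} conditions for wave-breaking and leave the statement as a conjecture rather than a theorem.
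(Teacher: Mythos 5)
You should first be aware that the paper does not prove this statement: it is explicitly left as an open ``provocation'' --- the authors state immediately beforehand that they ``are unable to find sufficient conditions for this blow-up'' --- so there is no proof in the paper to compare yours against. Judged on its own terms, your plan is the natural one (the standard precise blow-up scenario for Camassa--Holm type equations), and its core is sound: the easy direction via Sobolev embedding, and the contrapositive of the hard direction via an $H^s$ energy estimate with the Kato--Ponce commutator and the smoothing of $\partial_x\Lambda^{-2}$, giving $\frac{d}{dt}\|u\|_{H^s}^2\lesssim\|u_x\|_\infty\|u\|_{H^s}^2$. For $s=3$ you correctly observe that Lemma~\ref{lema5.1} already does this, and in fact under the weaker one-sided hypothesis $u_x>-\kappa$, which is the more useful form for wave-breaking statements.

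There are, however, two genuine gaps between what your argument delivers and what the conjecture asserts. First, Gr\"onwall plus the contrapositive only yields: if $T<\infty$ is maximal then $\int_0^T\|u_x(t,\cdot)\|_\infty\,dt=+\infty$, hence $\limsup_{t\nnearrow T}\|u_x(t,\cdot)\|_\infty=+\infty$. The conjecture claims a genuine limit, and the energy method cannot rule out a slope norm that oscillates with divergent integral but finite $\liminf$. Upgrading $\limsup$ to $\lim$ is exactly where the standard CH proofs invoke an additional idea --- a Riccati-type differential inequality for $\inf_x u_x(t,x)$ along the characteristics of Lemma~\ref{lema3.2}, forcing monotone divergence --- and that ingredient is absent from your proposal. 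Second, your reading of ``breaks'' as ``$T<\infty$ with $\|u(t,\cdot)\|_{H^s}\to\infty$'' is defensible but not forced: in the wave-breaking literature ``breaking'' means $u$ stays bounded while $u_x$ becomes unbounded, and under that reading Corollary~\ref{cor2.3} of this very paper asserts such a scenario cannot occur, so the conjecture would then be about whether any finite-time singularity is necessarily accompanied by blow-up of $\|u\|_\infty$ as well --- a different question your estimate does not address. So what you have is a correct proof sketch of the integrability criterion $\int_0^T\|u_x\|_\infty\,dt<\infty\Rightarrow$ continuation beyond $T$, which is a meaningful partial result, but not a proof of the conjecture as literally stated.
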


Also, we would like to point out that the question whether \eqref{8.0.1}, with $k=2, 3,4,\cdots$, admits global solutions $u$ subject to $u(0,x)=u_0(x)$, for a suitable choice of $s$, remains an open problem. 

We also studied some solutions of the equation \eqref{1.0.6}, namely, multi-peakon and kink-type solutions. We describe the dynamics of 2-peakon solutions for odd values of $k$. A very interesting result reported here is the case $k=-1$, when the have the conservation of the $H^1(\R)-$norm of the solutions of \eqref{1.0.6} with $k=-1$ is used to give a better description of the 2-peakon dynamics. We similarly make a detailed description of the peakon/antipeakon dynamics when $k=1$ compatible with the conserved quantity \eqref{1.0.8}.

Regarding kink-type solutions, we presented a picture of their dynamics, found some explicit solutions and also described the 2-kink solutions of the system \eqref{6.3.5}. Although the general solution is given in terms of the hypergeometric function, see \eqref{6.3.6}, for the case $k=1$ we find the 1-parameter explicit solution \eqref{6.3.7}, where the parameter is nothing but the initial condition of the Cauchy problem \eqref{6.3.5}. Indeed, we recover the results due to Xia and Qiao \cite{qiao} for the equation $m_t+u m_x=0$ to \eqref{1.0.6} with $k\in\mathbb{Z}$.

\section{Conclusion}
We studied the Cauchy problem \eqref{1.0.10} and also persistence properties of the solutions of the equation in \eqref{1.0.10}. The main results of the paper are given in Section \ref{sec2}, where we reported our main contributions, but not all, regarding \eqref{1.0.10}. We observe that some of our results answer questions pointed out by Himonas and Thompson \cite{himjmp2014}, as well as we generalized some results in \cite{himjmp2014,yan,zhou} regarding the equation \eqref{1.0.5} (eventually with some particular choices of the paramters) to the equation \eqref{8.0.1}. We also generalized the study of peakon and kink solutions made in \cite{qiao} for \eqref{8.0.1} with $k=1$ for \eqref{8.0.1} for $k\in\mathbb{Z}\setminus\{0\}$.

\section*{Acknowledgements}

The work of I. L. Freire is supported by CNPq (grants 308516/2016-8 and 404912/2016-8). P. L. da Silva  would like to thank FAPESP (grant number 2019/23688-4) for the financial support.


\begin{thebibliography}{10}


\bibitem{anco} S. C. Anco, P. L. da Silva and I. L. Freire, A family of wave-breaking equations generalizing the Camassa-Holm and Novikov equations, J. Math. Phys., vol. 56, paper 091506, (2015).

\bibitem{valls} L. Barreira and C. Valls, \textit{Ordinary differential equations}, GSM, AMS, (2012).

\bibitem{chprl} R. Camassa, D.D. Holm, An integrable shallow water equation with peaked solitons, Phys. Rev. Lett., vol. 71, 1661--1664, (1993).

\bibitem{const} A. Constantin, Existence of permanent and breaking waves for a shallow water equation: a geometric approach, Ann. Inst. Fourier, vol. 50, 321--362, (2000).

\bibitem{pri-proc} P. L. da Silva, I. L. Freire and J. C. S. Sampaio, A family of wave equations with some remarkable properties, Proc. A, vol. 474, paper 20170763, (2018). 

\bibitem{priigorjde} P. L. da Silva and I. L. Freire, Well-posedness, travelling waves and geometrical aspects of generalizations of the Camassa-Holm equation, J. Diff. Eq., vol. 267, 5318--5369, (2019).

\bibitem{pri-mon} P. L. da Silva and I. L. Freire, A geometrical demonstration for continuation of solutions of the generalised BBM equation, Monat für Math., vol. 194, 495--502, (2021).

\bibitem{pri} P. L. da Silva, Global well-posedness, radius of analyticity and continuation of solutions for a generalized $0$-equation, preprint, (2020).

\bibitem{deg} A. Degasperis, D. D. Holm and A. N. W. Hone, A new integrable equation with peakon solutions, Theor. Math. Phys., vol. 133, 1461--72, (2002).

\bibitem{dullin-fluid} H. R. Dullin, G. A. Gottwald and D. D. Holm, Camassa--Holm, Korteweg--de Vries-5 and other asymptotically equivalent equations for shallow water waves, Fluid Dynamics Research, vol. 33, 73--95, (2003).

\bibitem{holm-physica} H. R. Dullin, G. A. Gottwald and D. D. Holm, On asymptotically equivalent shallow water wave equations, Physica D, vol. 190, 1--14, (2004).

\bibitem{igor-dgh} I. L. Freire,  Conserved quantities, continuation and compactly supported solutions of some shallow water models, J. Phys. A: Math. Theor, vol. 54, paper 015207, (2021).

\bibitem{igor-arxiv1} I. L. Freire,  Geometrical demonstration for persistence properties for a bi-Hamiltonian shallow water system, arXiv:2011.08821, (2021).

\bibitem{igor-arxiv2} I. L. Freire,   Persistence properties and asymptotic analysis for a family of hyperbolic equations including the Camassa-Holm equation, arXiv:2012.12357 , (2021).

\bibitem{igor-cm} I. L. Freire, A look on some results about Camassa-Holm type equations, Communications in Mathematics, (2021), DOI: 10.2478/cm-2021-0006.

\bibitem{fomin} I. M. Gelfand and S. V. Fomin, \text{Calculus of Variations}, Dover, (2000).

\bibitem{him-cmp} A. A. Himonas, G. Misiolek, G. Ponce and Y. Zhou, Persistence properties and unique continuation of solutions of the Camassa-Holm equation, Commun. Math. Phys., vol. 271, 511-522, (2007).

\bibitem{himade} A. A. Himonas and C. Holliman, The Cauchy problem for a generalized Camassa-Holm equation, Adv. Diff. Equ., vol. 19, 161--200, (2014).

\bibitem{himjmp2014} A. A. Himonas and R. C. Thompson, Persistence properties and unique continuation for a generalized Camassa-Holm equation, J. Math. Phys., vol. 55, paper 091503, (2014).

\bibitem{himonas-jnl} A. A. Himonas, K. Grayshan and C. Holliman, Ill-posedness for the $b-$family of equations, J. Nonlin. Sci., vol. 26, 1175--1190, (2016).

\bibitem{himonas-procams} A. A. Himonas and D. Mantzavinos, An $ab$-family of equations with peakon traveling waves, Proc. Amer. Math. Soc., vol. 144, 3797--3811, (2016).

\bibitem{holm-siam} D. D. Holm and M. F. Staley, Wave structure and nonlinear balances in a family of evolutionary PDEs, SIAM J. Applied Dynamical Systems, vol. 2, 323--380, (2003).

\bibitem{holm-pla} D. Holm and M. Staley, Nonlinear balance and exchange of stability in dynamics of solitons, peakons, ramp/cliffs and leftons in 1+1 nonlinear evolutionary PDE, Phys. Lett., vol. 308, 437--444, (2003).

\bibitem{bruno} J. K. Hunter and B. Nachtergaele, \textit{Applied Analysis}, World Scientific, (2001).

\bibitem{mc1} H. P. McKean, Breakdown of a shallow water equation, Asian J. Math., vol. 2, 767--774, (1998)

\bibitem{mc2} H. P. McKean, Breakdown of the Camassa-Holm equation,Commun. Pure Appl. Math., vol. 57, 416–-418, (2004).

\bibitem{linares} F. Linares and G. Ponce, Unique continuation properties for solutions to the Camassa--Holm equation and related models, Proc. Amer. Math. Soc., vol. 148, 3871--3879, (2020).

\bibitem{kato} T. Kato, \textit{Quasi-linear equations of evolution, with applications to partial differential equations. in: Spectral theory and differential equations}, Proceedings of the Symposium Dundee, 1974, dedicated to
Konrad Jrgens, Lecture Notes in Math, 448, Springer, Berlin, 1975, pp. 25--70.

\bibitem{taylor} M. E. Taylor, Partial Differential Equations I, 2nd edition, Springer, (2011).

\bibitem{yan} K. Yan, Wave-breaking and global existence for a family of peakon equations with high order nonlinearity, Nonlinear Anal., RWA, vol. 45, 721--735, (2019).

\bibitem{vlad} V. S. Vladimirov, \textit{Equations of mathematical physics}, translated by Audrey Littlewood, Marcel Dekker, Inc., (1971).

\bibitem{qiao}B. Xia, Z. Qiao, The n-kink, bell-shape and hat-shape solitary solutions of b-family equation in the case of b=0, Phys. Lett. A, vol. 377, 2340--2342, (2013).

\bibitem{zhou} Y. Zhou, On solutions to the Holm-Staley $b-$family of equations, Nonlinearity, vol. 23, 369--381, (2010).

\end{thebibliography}
\end{document}